\newtheorem{theorem}{Theorem}[section]
\newtheorem{problem}[theorem]{Problem}
\newtheorem{lemma}[theorem]{Lemma}
\newtheorem{proposition}[theorem]{Proposition}
\newtheorem{corollary}[theorem]{Corollary}
\newtheorem{definition}[theorem]{Definition}
\newtheorem{hypothesis}[theorem]{Hypothesis}
\newtheorem{notation}[theorem]{Notation}
\theoremstyle{definition}
\newtheorem{remark}[theorem]{Remark}
\newtheorem{example}[theorem]{Example}
\newcommand{\rkhs}{reproducing kernel Hilbert space }
\newcommand{\A}{\alpha}
\newcommand{\e}{\epsilon}
\newcommand{\M}{\mathsf M}
\title[Beurling-Lax type theorems and Cuntz relations]{Beurling-Lax type theorems and Cuntz relations}
\author[D. Alpay]{Daniel Alpay}
\address{(DA)
Faculty of Mathematics, Physics, and Computation\\
Schmid College of Science and Technology\\
Chapman University\\
One University Drive
Orange, California 92866\\
USA}
\email{alpay@chapman.edu}
\author[F. Colombo]{Fabrizio Colombo}
\address{(FC) Politecnico di
Milano\\Dipartimento di Matematica\\Via E. Bonardi, 9\\20133 Milano\\Italy}
\email{fabrizio.colombo@polimi.it}
\author[I. Sabadini]{Irene Sabadini}
\address{(IS) Politecnico di
Milano\\Dipartimento di Matematica\\Via E. Bonardi, 9\\20133 Milano\\Italy}
\email{irene.sabadini@polimi.it}
\author[B. Schneider]{Baruch Schneider}
\address{(BS) University of Ostrava\\ Department of Mathematics\\
30.dubna 22, 70200 Ostrava\\Czech Republic }
\email{baruch.schneider@osu.cz}
\begin{document}
\maketitle
\begin{abstract}
We prove various Beurling-Lax type theorems, when the classical backward-shift operator is replaced by a general resolvent operator associated with a rational
function. We also study connections to the Cuntz relations. An important tool is a new representation result for analytic functions, in terms of composition and multiplication
operators associated with a given rational function. Applications to the theory of de Branges-Rovnyak spaces, also in the indefinite metric setting, are given.
\end{abstract}

\noindent AMS Classification: 47B32, 30C10

\noindent {\em Keywords:} de Branges-Rovnyak spaces, Cuntz relations, rational functions, Beurling-Lax theorem, backward-shift operator, structure theorems
\date{today}
\tableofcontents
\section{Introduction}
\setcounter{equation}{0}
In this paper we discuss decompositions of spaces associated with positive definite kernels and with kernels having a finite number of negative squares,
based on a new representation theorem for analytic functions. To set the framework and motivate our results, let us
consider a $\mathbb C^{p\times p}$-valued function $K(z,w)$ positive definite on a set $\Omega\subset\mathbb C$; let $r$ be a map from a set $\Omega_1\subset\mathbb C$ into $\Omega$, and let $A(z)$ be a
$\mathbb C^{1\times p}$-valued function defined on $\Omega_1$. The function $A(z)K(r(z),r(w))A(w)^*$ is positive definite on $\Omega_1$; elements of the reproducing kernel Hilbert space with reproducing
kernel $A(z)K(r(z),r(w))A(w)^*$ are of the form
\begin{equation}
  \label{fF}
f(z)=A(z)F(r(z)),
\end{equation}
where $F$ varies in the reproducing kernel Hilbert space with reproducing kernel $K(z,w)$.
Depending on various assumptions, one can insure that the above decomposition is unique, and one can then relate the norms of $f$ and $F$. In the present paper we assume that
$r$ is a rational function and that the entries of $A(z)$ span the backward-shift invariant space (also know as state space) $\mathfrak L(r)$ generated by $r$ (see Definition \ref{Statespace}
for the latter). We focus in particular on de Branges and de Branges-Rovnyak spaces  (see \cite{dbbook,dbr1,dbr2,dmk,MR3497010,MR3617311}), in view of
their important role in operator theory, linear system theory and
related topics; see e.g. \cite{MR1638044,Dym_CBMS}.
The properties of these spaces are related to properties of the classical backward-shift operators
\begin{equation}
  \label{totoche}
  (R_\alpha F)(z)=\begin{cases}\,\,\dfrac{F(z)-F(\alpha)}{z-\alpha},\quad z\not=\alpha,\\
    \,\, F^\prime(\alpha), \hspace{1.72cm} z=\alpha,
    \end{cases}
  \end{equation}
  where $F$ is assumed analytic in a neighborhood of $\alpha$. Moreover, various structure theorems characterizing de Branges-Rovnyak spaces in terms of the backward-shift operators
  can be seen as  Beurling-Lax type theorems. For instance, given a signature matrix $J\in\mathbb C^{s\times s}$ (i.e. $J$ satisfies $J=J^*=J^{-1}$) and given a reproducing
  kernel Hilbert space $\mathfrak H$ of
  $\mathbb C^s$-valued functions analytic in an open subset of the complex plane, symmetric with respect to the real line, the equality
\begin{equation}
  \left[ R_\alpha F,G\right] -\left[ F, R_\beta G\right]  -(\alpha-\overline{\beta})\left[ R_\alpha F,R_\beta G\right]=iG(\beta)^*JF(\alpha)
  \label{wdc}
\end{equation}
holds for all $\alpha,\beta\in \Omega$ and all $F,G\in\mathfrak H$ if and only if the reproducing kernel of $\mathfrak H$ is of the form
\begin{equation}
  K(z,w)=\frac{J-\Theta(z)J\Theta(w)^*}{-i(z-\overline{w})},
\end{equation}
see \cite{ball-contrac,dbhsaf1,HM}. The links with operator models stem from the following: When the operators $R_\alpha$ are bounded and with zero kernel, a result of Stone
(see \cite[Theorem 4.10, p. 137]{Stone}) asserts that there is a closed  linear operator $\mathsf A$ such that
\begin{equation}
  \label{model123321}
((\mathsf A-\alpha I)^{-1}f)(z)=zf(z)+c_f,\quad \alpha\in\Omega,
\end{equation}
where $c_f\in\mathbb C$. Properties of $\mathsf A$ and of the space $\mathfrak H$ are closely related.\\

de Branges-Rovnyak spaces come in all shapes and sizes, depending on the application in mind, and the domain of analyticity where they are defined. We here focus on three types of spaces,
usually denoted by the symbols $\mathfrak P(\Theta)$, $\mathfrak P(E_+,E_-)$ and $\mathfrak L(\mathsf N)$, associated respectively with closed to self-adjoint
(or close to unitary), Hermitian (or isometric) and
self-adjoint (or unitary) operators.\\

Although different in aim and approach, the present paper has connections with the works of Kailah and Lev-Ari \cite{lak}, Nudelman \cite{MR1246586}, and the approach developed in the papers
\cite{abds2-jfa,ad-jfa,ad9}. There, one is given two functions analytic in a connected open set $\Omega$, such that the sets
\[
  \begin{split}
    \Omega_+&=\left\{z\in\Omega\,;\, |b(z)|<|a(z)|\right\}\\
    \Omega_-&=\left\{z\in\Omega\,;\, |b(z)|>|a(z)|\right\}
  \end{split}
\]
are not empty. Then the set
\[
\Omega_0=\left\{z\in\Omega\,;\, |b(z)|=|a(z)|\right\}
\]
is also not empty (see e.g. \cite[Exercise 6.1.9, p. 295]{MR3560222}), and the function
\[
k(z,w)=\frac{1}{a(z)\overline{a(w)}-b(z)\overline{b(w)}}
  \]
is positive definite in $\Omega_+$. The reproducing kernels studied are then of the form
\begin{equation}
  \label{sapporo}
\frac{J-\Theta(z)J\Theta(w)^*}{a(z)\overline{a(w)}-b(z)\overline{b(w)}}
\end{equation}
when $z,w$ vary in some open subset of $\Omega_+$, where $J$ is a signature matrix. When such a kernel has a finite number of negative squares, the function $\Theta(z)$ in \eqref{sapporo} can be
written as $M(\sigma(z))$, where $\sigma(z)=\frac{b(z)}{a(z)}$, and where $M(z)$ is analytic in some open subset of the open unit disk.
Our rational function $r$ (or $1-r/1+r$) plays the role of $\sigma$.\\

de Branges-Rovnyak spaces have been furthermore generalized to numerous settings, ranging from the polydisk
(and the theory of Schur-Agler classes) to non-commutative settings and real compact Riemann surfaces setting, and here we take a different avenue and
study the counterparts of de Branges-Rovnyak spaces  when the classical resolvent operator (see \eqref{totoche} below) is replaced by a general resolvent operator associated with a
rational function $r$. Motivated by the Riemann surface case (see \cite[\S 4]{av3}), a generalization of \eqref{totoche}
has been introduced in \cite{AJLV15}. More precisely (in \cite{AJLV15} only the polynomial case was considered) we will assume, unless otherwise stated, that $r$ satisfies:

\begin{hypothesis}
 We will assume that the rational function $r(z)=\frac{p(z)}{q(z)}$,  where $p$ and $q$ are coprime polynomials  satisfying ${\rm deg }\,p\,\ge\, {\rm deg}\, q$.
When this hypothesis is not in force we can replace $r$ by $1/r$.
\end{hypothesis}

We set $N={\rm deg}\, p$, and define
\begin{equation}
\Omega(r)=\left\{\alpha\in\mathbb C\,\mbox{\text{\,such that the equation $r(z)=\alpha$ has $N$ pairwise different roots}}\right\}.
\label{rtyuiop}
\end{equation}
We do not write explicitly the dependence of the roots, say $w_1(\A),\ldots, w_N(\A)$,  on $r$ and $\alpha$, but when it makes the reading easier (such as in the statement and proof of
Lemma \ref{lemm26}). For $\alpha\in\Omega(r)$ we set
\begin{equation}
  (R^{(r)}_\alpha f)(z)=\begin{cases}\,\,\dfrac{f(z)}{r(z)-\alpha}-\sum_{n=1}^N\dfrac{f(w_n)}{r^\prime(w_n)(z-w_n)},\quad r(z)\not=\alpha,\\
    \\
    \,\, \sum_{n=1}^N\dfrac{f^\prime(w_n)}{r^\prime(w_n)}, \hspace{3.9cm} z\in r^{-1}\left\{\alpha\right\}=\left\{w_1,\ldots, w_N\right\},
  \end{cases}
  \label{newtotoche}
\end{equation}
where $f$ is assumed analytic in a neighborhood of the set $r^{-1}\left\{\alpha\right\}$ (the use of a lower case $f$ for \eqref{newtotoche} and of an upper case $F$ for \eqref{totoche} is on purpose, and is motivated by expressions such as \eqref{fF}).
We note that the first formula in \eqref{newtotoche} can be rewritten as (for $\alpha\in\mathbb C$ such that  $r(\infty)\not=\alpha$ and $z\not\in r^{-1}\left\{\alpha\right\}$):
\begin{equation}
\label{newtotoche2}
\begin{split}
(R^{(r)}_\alpha f)(z)&=\frac{f(z)}{r(\infty)-\alpha}+\sum_{n=1}^N\frac{1}{r^\prime(w_n)}\dfrac{f(z)-f(w_n)}{z-w_n}\\
&=\frac{f(z)}{r(\infty)-\alpha}+\sum_{n=1}^N \frac{1}{r^\prime(w_n)}(R_{w_n}f)(z).
\end{split}
\end{equation}
The operators \eqref{newtotoche} satisfy the resolvent identity
\begin{equation}
  \label{reso45}
R_\alpha^{(r)}-R_\beta^{(r)}=(\alpha-\beta)R_\alpha^{(r)}R_\beta^{(r)},\quad \alpha,\beta\in\Omega(r),
\end{equation}
see \cite{AJLV15}. Another slightly different proof  can be obtained using \eqref{newtotoche2} since the classical resolvent operators satisfy the resolvent identity; see Corollary \ref{011821}
below. When these operators have a trivial kernel, the underlying operator models are now of the form \eqref{newmodel} below, that is
\[
(Tf)(z)=r(z)f(z)+h_f(z)
\]
where $h_f(z)$ is a rational function of $z$, linear in $f$.\\

In the present paper we study the counterpart of the de Branges and de Branges -Rovnyak spaces when the classical backward-shift operators are replaced by the operators \eqref{newtotoche}. For instance
\eqref{wdc} will now be replaced by
\begin{equation}
  \label{wdccdw}
    \left[ R^{(r)}_\alpha f,g\right] -\left[ f, R_\beta^{(r)} g\right]  -(\alpha-\overline{\beta})\left[ R^{(r)}_\alpha f,R^{(r)}_\beta g\right]=i(\alpha-\overline{\beta})\sum_{i,j=1}^N
  \frac{g(v_j)^*}{\overline{r^\prime(v_j)}}J\frac{f(u_i)}{r^\prime(u_i)}\frac{1}{u_i-\overline{v_j}}
\end{equation}
where the $u_i$ (resp. the $v_j$) are the pre-images of $\alpha$ (resp. of $\beta$) under $r$. We note that the right hand side of \eqref{wdccdw} is adapted from the Riemann surface case;
see \cite[Theorem 5.1, p. 316]{av3}.\\

To describe our results we need some more notation and definitions.

\begin{definition}
  \label{Statespace}
    The linear span of the functions  $z\mapsto \frac{r(z)-r(a)}{z-a}$ where $a$ runs
through the points of analyticity of $r$, will be called the state space associated with $r$ and will be denoted by $\mathfrak L(r)$.
\end{definition}

The space $\mathfrak L(r)$ is finite dimensional, as we recall in Lemma \ref{rtyuiop}, and we denote by $Z_r(z)$ a $1\times N$ matrix with entries a basis of $\mathfrak L(r)$:
\begin{equation}
  Z_r(z)=\begin{pmatrix}e_1(z)&\cdots &e_N(z)\end{pmatrix}.
  \label{zzzzz}
\end{equation}

\begin{example}
  For $r(z)=z+\frac{1}{z}$, the space $\mathfrak L(r)$ is spanned by the functions $1$ and $1/z$ since
  \[
\frac{r(z)-r(a)}{z-a}=1-\frac{1}{za}
\]
for this $r$.
\label{zz-1}
  \end{example}

\begin{remark}{\rm
    When $r(z)=z$, the corresponding $Z_r(z)$ can be taken to be $1$, and the operator reduces to the classical backward shift operators \eqref{totoche}.}
  \end{remark}

  We will be interested in functions of the form
  \begin{equation}
    f(z)=(Z_r(z)\otimes I_s)F(r(z)),
    \label{tyuiop}
    \end{equation}
    where $F$ is analytic in a neighborhood of the origin, and a characterization of these functions is given in Theorem \ref{234432}.\smallskip

Condition \eqref{wdccdw} is now equivalent (see Theorem \ref{thm4.8} for a precise statement) to the reproducing kernel of the space to be of the form
\begin{equation}
 ( Z_r(z)\otimes I_s)Y^{-1}\frac{J_0-\Theta(r(z))J_0\Theta(r(w))^*}{-i(r(z)-\overline{r(w)})}Y^{-*}(Z_r(w)\otimes I_s)^*,
\end{equation}
where $\Theta$ is a $\mathbb C^{rs\times rs}$-valued function analytic in a neighborhood of the origin, $Y\in\mathbb C^{rs\times rs}$ is invertible, $J_0\in
\mathbb C^{rs\times rs}$ is a signature matrix, and the latter two depend only on $Z_r$ and $J$.\\

Another question which we are considering here is related to the Cuntz relations. With the above notation, we look for pairs of positive definite kernels $(K,K_0)$ such that, with
$K_0(r)(z,w)=K_0(r(z),r(w))$, we have the orthogonal decomposition (the functions $e_j$ have been defined by \eqref{zzzzz})
\begin{equation}
  \mathfrak H(K)=\oplus e_j\mathfrak H(K_0(r)).
\end{equation}

The weighted multiplication operators $T_j:\,\,\mathfrak H(K_0)\longrightarrow \mathfrak H(K)$, $j=1,\ldots, N$, defined by
\begin{equation}
(T_jg)(z)=e_j(z)g(r(z)),\quad j=1,\ldots, N
  \end{equation}
  satisfy then the Cuntz relations.\\

  An important tool in our results is a representation theorem for analytic functions, characterizing functions of the form $Z_r(z)F(r(z))$ (the case where
  $r(z)$ is a polynomial has been considered in \cite{AJLV15}).\\

  The paper contains ten sections besides the introduction. Sections 2-7 form the first part of the paper. After the preliminary Section 2, in which we discuss some aspects of realizations of rational functions, the main general results are presented in Sections 3-7. Specifically, in Section 3 we study, given a rational function, an associated symmetric matrix also proving a technical result which may be of independent interest. Section 4 contains a main result, namely a representation theorem, see Theorem 4.1, which generalizes the polynomial case and some particular cases of rational functions already studied in the literature. Section 5 discusses how our results are connected with Cuntz relations. Section 6 studies subspaces invariant under the action of
  suitable generalized backward shift operators, while Section 7 moves to related operator models.
Sections 8-11 are devoted to the counterpart of de Branges-Rovnyak spaces in the present setting. We study the Pontryagin spaces with assigned reproducing kernels in the line and in the circle case, we present two structure theorems, and we consider also the case of negative squares and generalized Nevanlinna functions.
\section{Preliminary results}
\setcounter{equation}{0}

In this section, which is of a preliminary nature, we discuss various aspects of realization theory of rational functions.
We mention in particular Corollary \ref{corocoro}, which may be of independent interest, and Corollary \ref{corocoro1},
which allows us to make the connection with the classical case where $r(z)=z$.
The first result follows from a partial fraction expansion; we write out the details since \eqref{newtotoche4} plays a central role in the sequel. We note that it may be that
$r(\infty)\in{\rm ran}\, r$, for example  $r(z)=1+\dfrac{z-1}{z^2+1}$. We have $r(1)=r(\infty)=1$.

\begin{proposition}
  Let $r$ be a rational function of degree $N$ and let $\alpha\in\Omega(r)$. We have:
\begin{equation}
  \label{newtotoche4}
  \frac{1}{r(z)-\alpha}=\frac{1}{r(\infty)-\alpha}
  +\sum_{n=1}^N\frac{1}{r^\prime(w_n)(z-w_n)},
  \end{equation}
  which reduces to
\begin{equation}
  \label{newtotoche456}
    \frac{1}{r(z)-\alpha}=\sum_{n=1}^N\frac{1}{r^\prime(w_n)(z-w_n)},
  \end{equation}
  when  $r(\infty)=\infty$.
  \end{proposition}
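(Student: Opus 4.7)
The plan is to derive the identity from a direct partial fraction expansion of the rational function $1/(r(z)-\alpha)$. Writing $r=p/q$ with $\deg p \ge \deg q$, we have
\[
\frac{1}{r(z)-\alpha}=\frac{q(z)}{p(z)-\alpha q(z)}.
\]
First I would verify that the numerator $p(z)-\alpha q(z)$ has degree exactly $N=\deg p$ for $\alpha\in\Omega(r)$. This is automatic when $\deg p>\deg q$. When $\deg p=\deg q=N$, a drop in degree would force $\alpha$ to equal the ratio of leading coefficients, i.e.\ $\alpha=r(\infty)$, in which case $r(z)=\alpha$ would have fewer than $N$ roots, contradicting $\alpha\in\Omega(r)$. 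Hence $p(z)-\alpha q(z)$ is a polynomial of degree $N$ whose zeros are precisely the $N$ distinct points $w_1,\ldots,w_N$ given by Hypothesis via the definition of $\Omega(r)$.

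Next I would invoke the standard partial fraction decomposition. Since $\deg q\le N=\deg(p-\alpha q)$, the rational function $q(z)/(p(z)-\alpha q(z))$ is a sum of a constant $c$ (the polynomial part, which is a constant because the two polynomials have comparable degrees) and simple fractions:
\[
\frac{q(z)}{p(z)-\alpha q(z)}=c+\sum_{n=1}^{N}\frac{a_n}{z-w_n}.
\]
The constant $c$ is the value at infinity. If $\deg p>\deg q$ then $c=0$; if $\deg p=\deg q$ then $c=\frac{1}{(\mathrm{lc}\,p/\mathrm{lc}\,q)-\alpha}=\frac{1}{r(\infty)-\alpha}$. Both cases are uniformly captured by $c=1/(r(\infty)-\alpha)$ with the convention that the term vanishes when $r(\infty)=\infty$, giving the reduced formula \eqref{newtotoche456}.

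Finally I would identify each residue:
\[
a_n=\lim_{z\to w_n}\frac{(z-w_n)\,q(z)}{p(z)-\alpha q(z)}=\frac{q(w_n)}{p'(w_n)-\alpha q'(w_n)}.
\]
The quotient rule applied to $r=p/q$, together with $p(w_n)=\alpha q(w_n)$, yields
\[
r'(w_n)=\frac{p'(w_n)q(w_n)-p(w_n)q'(w_n)}{q(w_n)^{2}}=\frac{p'(w_n)-\alpha q'(w_n)}{q(w_n)},
\]
so $a_n=1/r'(w_n)$, completing the identity. The only subtle point is the degree-tracking of $p-\alpha q$ in step one; once that is settled, the rest is the standard Heaviside cover-up computation. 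No separate argument is needed for the case $r(\infty)=\infty$, which is obtained by sending $r(\infty)\to\infty$ in \eqref{newtotoche4}.
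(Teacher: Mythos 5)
Your proof is correct and follows essentially the same route as the paper's: a partial fraction expansion of $q(z)/(p(z)-\alpha q(z))$ with the constant term identified as $1/(r(\infty)-\alpha)$ and the residues computed via the quotient-rule identity $r'(w_n)=\bigl(p'(w_n)-\alpha q'(w_n)\bigr)/q(w_n)$, which is exactly the paper's \eqref{rprime}; your extra degree-tracking of $p-\alpha q$ is a welcome precision the paper leaves implicit. The only point worth stating explicitly (as the paper does) is that $q(w_n)\neq 0$, which follows from the coprimality of $p$ and $q$ and justifies dividing by $q(w_n)$.
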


  \begin{proof} The zeros $w_1,\ldots, w_N$ cannot be zeros of $q$, even if we interpret $r(w_n)=\alpha$ as $p(w_n)=\alpha q(w_n)$. If $q(w_n)=0$ then $p(w_n)=0$, but $p$ and $q$ are assumed prime
    together. From $\alpha=\frac{p(w_n)}{q(w_n)}$ we get
    \begin{equation}
      \label{rprime}
      \begin{split}
        \frac{p^\prime(w_n)-\alpha q^\prime(w_n)}{q(w_n)}&=\frac{p^\prime(w_n)- \dfrac{p(w_n)}{q(w_n)}q^\prime(w_n)}{q(w_n)}\\
        &=\frac{p^\prime(w_n)q(w_n)-p(w_n)q^\prime(w_n)}{q(w_n)^2}\\
        &=r^\prime(w_n).
\end{split}
      \end{equation}
   Partial fraction expansion gives
    \[
      \begin{split}
        \frac{1}{r(z)-\alpha}&=\frac{1}{r(\infty)-\alpha}+\frac{q(z)}{p(z)-q(z)\alpha}\\
        &=\frac{1}{r(\infty)-\alpha}+
\sum_{n=1}^N\frac{q(w_n)}{p^\prime(w_n)-\alpha q^\prime(w_n)}\frac{1}{z-w_n}\\
        &=\frac{1}{r(\infty)-\alpha}+
\sum_{n=1}^N\frac{q(w_n)}{p^\prime(w_n)-\frac{p(w_n)}{q(w_n)}q^\prime(w_n)}\frac{1}{z-w_n}\\
        &=\frac{1}{r(\infty)-\alpha}+\sum_{n=1}^N\frac{1}{r^\prime(w_n)(z-w_n)},
        \end{split}
      \]
      where we have used \eqref{rprime}.
    \end{proof}

    We note that if $r(\infty)=0$ then $0\not\in \Omega(r)$.\\

From the previous result we deduce:

\begin{corollary}
  \label{011821}
  Let $r$ be a rational function of degree $N$ and $\alpha,\beta\in\Omega(r)$. Then \eqref{reso45} follows from \eqref{newtotoche2}.
\end{corollary}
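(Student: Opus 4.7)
The plan is to take formula \eqref{newtotoche2} as a definition of $R_\alpha^{(r)}$ as an $I$-term plus a sum of classical $R_{w_n}$ operators, then expand the product $R_\alpha^{(r)} R_\beta^{(r)}$ and collapse it via the classical resolvent identity $R_a R_b = (R_a - R_b)/(a-b)$.

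First I would fix $\alpha,\beta\in\Omega(r)$, let $u_1,\ldots,u_N$ be the roots of $r(z)=\alpha$ and $v_1,\ldots,v_N$ the roots of $r(z)=\beta$, and rewrite \eqref{newtotoche2} as
\[
R_\alpha^{(r)} = \frac{1}{r(\infty)-\alpha}\, I + \sum_{i=1}^N \frac{1}{r'(u_i)} R_{u_i},\qquad
R_\beta^{(r)} = \frac{1}{r(\infty)-\beta}\, I + \sum_{j=1}^N \frac{1}{r'(v_j)} R_{v_j}.
\]
Taking the difference gives
\[
R_\alpha^{(r)} - R_\beta^{(r)} = \frac{\alpha-\beta}{(r(\infty)-\alpha)(r(\infty)-\beta)}\, I + \sum_{i}\frac{R_{u_i}}{r'(u_i)} - \sum_{j}\frac{R_{v_j}}{r'(v_j)},
\]
which will be the target expression.

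Next I would expand the product, using the classical resolvent identity on the cross terms:
\[
R_\alpha^{(r)} R_\beta^{(r)} = \frac{I}{(r(\infty)-\alpha)(r(\infty)-\beta)} + \frac{1}{r(\infty)-\beta}\sum_{i}\frac{R_{u_i}}{r'(u_i)} + \frac{1}{r(\infty)-\alpha}\sum_{j}\frac{R_{v_j}}{r'(v_j)} + \sum_{i,j}\frac{R_{u_i}-R_{v_j}}{r'(u_i)r'(v_j)(u_i-v_j)}.
\]
The key combinatorial step is to evaluate the inner sums in the last term via \eqref{newtotoche4} at the points $z=u_i$ and $z=v_j$. Since $r(u_i)=\alpha$ and $r(v_j)=\beta$, applying \eqref{newtotoche4} to $\frac{1}{r(u_i)-\beta}$ and to $\frac{1}{r(v_j)-\alpha}$ yields
\[
\sum_{j}\frac{1}{r'(v_j)(u_i-v_j)} = \frac{1}{\alpha-\beta} - \frac{1}{r(\infty)-\beta},\qquad
\sum_{i}\frac{1}{r'(u_i)(u_i-v_j)} = \frac{1}{\alpha-\beta} + \frac{1}{r(\infty)-\alpha}.
\]

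Finally I would substitute these two identities into the expansion of $(\alpha-\beta)R_\alpha^{(r)}R_\beta^{(r)}$ and observe that the terms proportional to $(\alpha-\beta)/(r(\infty)-\beta)\sum_i R_{u_i}/r'(u_i)$ and to $(\alpha-\beta)/(r(\infty)-\alpha)\sum_j R_{v_j}/r'(v_j)$ cancel in pairs, leaving exactly $R_\alpha^{(r)}-R_\beta^{(r)}$. The main obstacle is purely bookkeeping: keeping track of signs in the $u_i-v_j$ versus $v_j-u_i$ denominators when invoking \eqref{newtotoche4}, and correctly pairing the $1/(\alpha-\beta)$ contributions with the $1/(r(\infty)-\cdot)$ correction terms so that the cancellation is visible.
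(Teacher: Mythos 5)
Your proposal is correct and follows essentially the same route as the paper: decompose $R_\alpha^{(r)}$ and $R_\beta^{(r)}$ via \eqref{newtotoche2}, expand the product, reduce the cross terms with the classical resolvent identity, and evaluate the resulting inner sums by specializing \eqref{newtotoche4} at the roots $u_i$, $v_j$, with the final scalar identity $\dfrac{\alpha-\beta}{(r(\infty)-\alpha)(r(\infty)-\beta)}=\dfrac{1}{r(\infty)-\alpha}-\dfrac{1}{r(\infty)-\beta}$ closing the computation exactly as in the paper's proof.
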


\begin{proof}
  Let $w_1,\ldots, ,w_N$ be the roots (which are pairwise different since $\alpha\in\Omega(r)$) of the equation $r(z)=\alpha$, and let $v_1,\ldots, v_N$ be the roots (pairwise different since
  $\beta\in\Omega(r)$) of $r(v_m)=\beta$. With
  \[
    c=\frac{1}{r(\infty)-\alpha},\quad d=\frac{1}{r(\infty)-\beta},\quad c_n=\frac{1}{r^\prime(w_n)},\quad d_n=\frac{1}{r^\prime(v_n)},\,\, n=1,\ldots, N ,
  \]
 we can write (since the operator \eqref{totoche} satisfy the resolvent identity, and using \eqref{newtotoche4}):
\[
  \begin{split}
    R^{(r)}_\alpha R^{(r)}_\beta&=\left(c+\sum_{n=1}^Nc_nR_{w_n}\right)\left(d+\sum_{m=1}^Nd_mR_{v_m}\right)\\
    &=cR_\beta^{(r)}+d\sum_{n=1}^Nc_ndR_{w_n}+\sum_{n,m=1}^N c_nd_mR_{w_n}R_{v_m}\\
    &=cR_{\beta}^{(r)}+d(R^{(r)}_\alpha-c)+\sum_{n,m=1}^N c_nd_m\frac{R_{w_n}-R_{v_m}}{w_n-v_m}\\
    &=cR_{\beta}^{(r)}+d(R^{(r)}_\alpha-c)+\sum_{n=1}^N c_nR_{w_n}\left(\sum_{m=1}^Nd_m\frac{1}{w_n-v_m}\right)-\\
    &\hspace{5mm}-\sum_{m=1}^N d_mR_{v_m}\left(\sum_{n=1}^Nc_n\frac{1}{w_n-v_m}\right)\\
    &=cR_{\beta}^{(r)}+d(R^{(r)}_\alpha-c)+(R_\alpha^{(r)}-c)\left(\frac{1}{\alpha-\beta}-d\right)-\\
    &\hspace{5mm}-(R^{(r)}_\beta-d)\left(-\frac{1}{\beta-\alpha}+c\right)\\
    &=\frac{R_\alpha^{(r)}-R_\beta^{(r)}}{\alpha-\beta}+cd+\frac{d-c}{\alpha-\beta}\\
        &=\frac{R_\alpha^{(r)}-R_\beta^{(r)}}{\alpha-\beta}
  \end{split}
\]
since, by direct computation, we have $cd+\dfrac{d-c}{\alpha-\beta}=0.$
\end{proof}

For the next lemma, we use realization theory for rational functions; see e.g. \cite{bgk1, MR2663312,MR1393938}. We follow the analysis in \cite{ad9}. We will content ourselves with recalling the
following facts. First, a $\mathbb C^{r\times s}$-valued function $M$ is rational (i.e. has rational components) if and only if the linear span of the functions $R_aMc$ is finite dimensional, where $a$ runs through the
regular points of $M$ and $c$ runs through $\mathbb C^s$. Next, every rational function regular at the origin can be written in the form
\begin{equation}
  M(z)=H+zG(I-zT)^{-1}F,
\end{equation}
where $H=M(0)$ and $G,T,F$ are matrices of appropriate sizes.
One way to build a realization (in terms of linear operators rather than matrices) uses the space $\mathfrak L(M)$ and is called
the backward-shift realization. More generally, if $M$ is regular at a point $a\in\mathbb C$ there exists a realization of the form
\begin{equation}\label{Ma}
M(z)=H+(z-a)G(T_1-zT_2)^{-1}F
\end{equation}
with $\det (T_1-aT_2)\not=0$. The realization is minimal if the size of the matrices $T_1$ and $T_2$ is minimal. Assuming $T_1,T_2\in\mathbb C^{N\times N}$, minimality is equivalent to
having both controllability,
\begin{equation}
  \label{obs}
\bigcap_{z\in\mathcal U} \ker G(T_1-zT_2)^{-1}=\left\{0\right\}
\end{equation}
and observability
\[
  \bigcup_{z\in\mathcal U}{\rm ran}\, (T_1-zT_2)^{-1}F=\mathbb C^N,
\]
where $\mathcal U$ is a neighborhood of $a$. One can in fact consider a finite number of different points $w_1,\ldots, w_N$, since we are in the rational dimensional case.
For instance, condition \eqref{obs} can be rewritten as
\[
  c\in\mathbb C^N\,\,:\,\,G(T_1-wT_2)^{-1}c
  =0,\quad z\in\mathcal U\,\,\Longrightarrow \,\, c=0
\]
but the function $G(T_1-wT_2)^{-1}c$ is rational of degree less or equal to $N$.\smallskip

We now explicit the backward shift realization:

\begin{proposition}
Let $M$ be a ${\mathbb C}^{p\times q}$--valued rational function with
associated space ${\mathfrak L}(M)$ and let $a$ be a point of analyticity of
$M$. Then,
\begin{equation}
M(z)=H+(z-a)G(I_{\mathfrak L(M)}-(z-a)A)^{-1}F
\end{equation}
where the operator matrix
\begin{equation}
\left(\begin{array}{cc}A&F\\G&H\end{array}\right)\quad:\quad
\left(\begin{array}{c}{\mathfrak L}(M)\\ {\mathbb C}^q\end{array}
\right)\Longrightarrow
\left(\begin{array}{c}{\mathfrak L}(M)\\ {\mathbb C}^p\end{array}\right)
\end{equation}
is defined by (with $c\in\mathbb C^q$)
\begin{eqnarray}
Af(z)&=&\frac{f(z)-f(a)}{z-a}\\
Fc&=&\frac{M(z)-M(a)}{z-a}c\\
Gf&=&f(a)\\
Hc&=&M(a)c.
\end{eqnarray}
\end{proposition}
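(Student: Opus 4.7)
The plan is to verify the backward-shift realization formula by a direct computation of $(I-(z-a)A)^{-1}F$ applied to a column vector $c\in\mathbb C^q$, reading off the value at $a$, and matching against $M(z)c$.

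First I would check that the four operators are well defined. Recall that $\mathfrak L(M)$ is spanned by functions of the form $R_bMc$, where $b$ ranges over the regular points of $M$ and $c\in\mathbb C^q$. The classical resolvent identity
\[
R_aR_b=\frac{R_a-R_b}{a-b},\qquad a\ne b,
\]
immediately gives $R_a(R_bMc)\in\mathfrak L(M)$, so $A=R_a$ leaves $\mathfrak L(M)$ invariant; and $Fc=R_aMc$ lies in $\mathfrak L(M)$ by the very definition of the latter. Since $\mathfrak L(M)$ is finite-dimensional, $I-(z-a)A$ is invertible for every $z$ in some small enough neighborhood $\mathcal U$ of $a$ consisting of regular points of $M$.

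The heart of the argument is an explicit identification of $g_z:=(I-(z-a)A)^{-1}Fc$. For $z\in\mathcal U$ the element $g_z$ sits in $\mathfrak L(M)$, so it is holomorphic at every regular point of $M$, in particular at an auxiliary variable $w=z$. Evaluating the defining relation $(I-(z-a)A)g_z=Fc$ at such a $w\ne a$, and using the prescriptions for $A$ and $F$, yields the functional equation
\[
g_z(w)-(z-a)\frac{g_z(w)-g_z(a)}{w-a}=\frac{M(w)-M(a)}{w-a}c,
\]
which after multiplying by $w-a$ becomes
\[
(w-z)g_z(w)=(M(w)-M(a))c-(z-a)g_z(a).
\]
The left-hand side is holomorphic in $w$ near $w=z$ and vanishes at $w=z$, so the right-hand side must also vanish there, which pins down
\[
g_z(a)=\frac{(M(z)-M(a))c}{z-a}.
\]

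Finally, since $Gg_z=g_z(a)$ and $Hc=M(a)c$, I would conclude
\[
Hc+(z-a)Gg_z=M(a)c+(M(z)-M(a))c=M(z)c
\]
for every $c\in\mathbb C^q$, which is exactly the claimed realization. I expect the only delicate point to be the analyticity argument that pins down $g_z(a)$ from the functional equation; everything else reduces to bookkeeping with the resolvent identity and the definitions of the four blocks of the realization matrix.
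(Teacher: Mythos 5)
Your proof is correct, and in fact the paper offers no proof at all for this proposition: it is stated as a known fact from realization theory (the backward-shift realization centered at $a$), with references to the literature, so your direct verification genuinely fills a gap rather than duplicating an argument. The computation you give — solving $(I-(z-a)A)g_z=Fc$ pointwise, multiplying by $w-a$ to get $(w-z)g_z(w)=(M(w)-M(a))c-(z-a)g_z(a)$, and evaluating at $w=z$ to identify $g_z(a)$ — is exactly the standard way to check such realizations, and the conclusion $Hc+(z-a)Gg_z=M(z)c$ follows as you say, with the case $z=a$ trivial.

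One small point deserves a word more than you give it: in checking that $A=R_a$ maps $\mathfrak L(M)$ into itself, the resolvent identity only handles the spanning functions $R_bMc$ with $b\ne a$, not $R_a(R_aMc)$. This is easily repaired: since $b\mapsto R_bMc$ is analytic as a map into the finite-dimensional (hence closed) space $\mathfrak L(M)$, the span of the $R_bMc$ with $b\ne a$ is already all of $\mathfrak L(M)$ (a linear functional vanishing on it vanishes on $R_aMc$ by continuity), or alternatively $R_a^2Mc=\lim_{b\to a}R_aR_bMc$ stays in the closed subspace. With that remark added, the argument is complete; invertibility of $I-(z-a)A$ for $z$ near $a$ and the regularity of elements of $\mathfrak L(M)$ at regular points of $M$, which you use to evaluate at $w=z$, are handled correctly.
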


It is called a realization centered at $a$. It corresponds to
\[
T_1=I_{\mathfrak L(M)}+aA\quad {\rm and}\quad T_2=A
\]
in  \eqref{Ma}.
When $a=0$ the above realization is called the
{\sl backwards shift realization}.\\

For completeness we mention and prove:

\begin{proposition}
  \label{minizero}
  Assume the realization \eqref{Ma} minimal. Then, the set of points $\mathcal U(M)$ where $M$ is defined coincides with the set of points $U$ for
  which $\det (T_1-zT_2)\not=0$.
\end{proposition}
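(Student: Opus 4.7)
The inclusion $U\subseteq\mathcal U(M)$ is immediate from the realization formula \eqref{Ma}. For the converse I would argue by contradiction: suppose $z_0\in\mathcal U(M)$ yet $\det(T_1-z_0T_2)=0$. Since $\det(T_1-aT_2)\neq 0$ we have $z_0\neq a$, so
\[
\phi(z):=G(T_1-zT_2)^{-1}F=\frac{M(z)-H}{z-a}
\]
is analytic at $z_0$. Laurent-expand $(T_1-zT_2)^{-1}=\sum_{k\ge -m}(z-z_0)^k A_k$ with $m\ge 1$ and $A_{-m}\neq 0$. Matching the coefficients of $(z-z_0)^{-m}$ in the two identities $(T_1-zT_2)(T_1-zT_2)^{-1}=I=(T_1-zT_2)^{-1}(T_1-zT_2)$ gives
\[
(T_1-z_0T_2)A_{-m}=0=A_{-m}(T_1-z_0T_2),
\]
while analyticity of $\phi$ at $z_0$ forces $GA_{-k}F=0$ for $k=1,\dots,m$; in particular $GA_{-m}F=0$.

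The core of the plan is to convert these relations, together with minimality, into $A_{-m}=0$, the desired contradiction. Using $T_1-z_0T_2=(T_1-wT_2)+(w-z_0)T_2$, the identity $A_{-m}(T_1-z_0T_2)=0$ rearranges to
\[
A_{-m}T_2(T_1-wT_2)^{-1}=-\frac{A_{-m}}{w-z_0}.
\]
Sandwiching with $G$ on the left and $Fc$ on the right and using $GA_{-m}F=0$ shows that $GA_{-m}T_2$ annihilates every vector $(T_1-wT_2)^{-1}Fc$. Controllability then forces $GA_{-m}T_2=0$, and feeding this back into the displayed identity yields $GA_{-m}=0$. The symmetric manipulation starting from $(T_1-z_0T_2)A_{-m}=0$ produces $(T_1-wT_2)^{-1}T_2A_{-m}=A_{-m}/(z_0-w)$; applying $G$ on the left and using $GA_{-m}=0$ gives $G(T_1-wT_2)^{-1}T_2A_{-m}c=0$ for every $c$ and every admissible $w$, so observability forces $T_2A_{-m}=0$, hence also $T_1A_{-m}=z_0T_2A_{-m}=0$.

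Therefore $\mathrm{ran}(A_{-m})\subseteq\ker T_1\cap\ker T_2$. Any nonzero vector in this intersection would make $(T_1-zT_2)v\equiv 0$, contradicting $\det(T_1-aT_2)\neq 0$; hence $\ker T_1\cap\ker T_2=\{0\}$ and $A_{-m}=0$, contradicting its choice. The main subtlety I anticipate is exactly this two-sided use of the minimality hypotheses: the single vanishing $GA_{-m}F=0$ is much weaker than $A_{-m}=0$, and one needs the two Laurent relations above together with controllability (to propagate the vanishing to the right, from $F$ across the whole state space) and observability (to propagate it dually to the left, out of $G$) in order to force $A_{-m}$ into the intersection $\ker T_1\cap\ker T_2$.
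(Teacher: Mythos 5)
Your proof is correct, but it takes a genuinely different route from the paper's. The paper argues directly rather than by contradiction: it uses the two-variable identity $\frac{M(z)-M(w)}{z-w}=G(T_1-zT_2)^{-1}(T_1-aT_2)(T_1-wT_2)^{-1}F$, chooses points $w_1,\ldots,w_N$ for which the ranges of $(T_1-w_nT_2)^{-1}F$ span $\mathbb{C}^N$ to conclude that $G(T_1-zT_2)^{-1}$ extends analytically to every $z\in\mathcal U(M)$, and then repeats the argument on the other side (points $w_n$ with the rows of $G(T_1-w_nT_2)^{-1}$ spanning) to extend $(T_1-zT_2)^{-1}$ itself, whence $\det(T_1-zT_2)\neq 0$ at $z$. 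You instead work locally at a putative singular point $z_0\in\mathcal U(M)$ through the principal part of the Laurent expansion of $(T_1-zT_2)^{-1}$: the relations $(T_1-z_0T_2)A_{-m}=A_{-m}(T_1-z_0T_2)=0$ and $GA_{-m}F=0$, fed through the two minimality conditions, give first $GA_{-m}=0$, then $T_2A_{-m}=T_1A_{-m}=0$, and finally $A_{-m}=0$ because $\ker T_1\cap\ker T_2=\{0\}$ (invertibility of $T_1-aT_2$), a contradiction; all the intermediate identities check out. Both arguments use the same ingredients, namely resolvent-type identities at two points together with both spanning conditions, but yours trades the paper's stacking/one-sided-inverse argument for an explicit computation with the leading Laurent coefficient: the paper's version is shorter and yields the analytic continuation of $G(T_1-zT_2)^{-1}$ as a by-product, while yours is more self-contained and makes explicit why the single relation $GA_{-m}F=0$ is too weak and must be propagated by controllability and observability. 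One cosmetic remark: your use of ``controllability'' for the range condition and ``observability'' for the kernel condition is the standard convention but is the opposite of the labels attached to \eqref{obs} and the following display in the paper; since you invoke both conditions, the mathematics is unaffected.
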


\begin{proof}
  One direction is clear, $U\subset \mathcal U(M)$,
  and does not use minimality. If $\det (T_1-zT_2)\not=0$, then $M(z)$ is well defined at that point. To study the converse
we note the formula
\begin{equation}
\frac{M(z)-M(w)}{z-w}=G(T_1-zT_2)^{-1}(T_1-aT_2)(T_1-wT_2)^{-1}F,
\end{equation}
valid {\sl a priori} for $z,w\in U$.\smallskip

Taking now $z\in\mathcal U(M)$ and $w_1,\ldots, w_N$ such that the union of the ranges of $(T_1-w_nT_2)^{-1}F$ span $\mathbb C^N$ we see that
\[
G(T_1-zT_2)^{-1}(T_1-aT_2)\begin{pmatrix}(T_1-w_1T_2)^{-1}F&\cdots&(T_1-w_NT_2)^{-1}F\end{pmatrix}
\]
is well defined at $z$, and so is $G(T_1-zT_2)^{-1}$. Take now $w_1,\ldots, w_N$ such that the left range of the matrices
\[
G(T_1-w_nT_2)^{-1},\quad n=1,\ldots, N,
\]
is all of $\mathbb C^N$, we see that $(T_1-zT_2)^{-1}$ is well defined.
\end{proof}

\begin{remark}
For the interested reader, we mention that
the case of functions analytic at $\infty$ corresponds to realizations of the form
\begin{equation}
  M(z)=H+G(zI-T)^{-1}F.
\end{equation}
\end{remark}

In the statement of next result, observability of the pair $(G,T)$  reduces to
\begin{equation}
  \bigcap_{\ell=0}^\infty \ker GT^\ell=\left\{0\right\}.
  \end{equation}
  We give a proof of the lemma for completeness.
\begin{lemma}
  \label{rtyuiop}
  The linear span $\mathfrak L(r)$ of the functions  $z\mapsto \dfrac{r(z)-r(a)}{z-a}$ where $a$ runs through $\mathcal U(r)$, is finite dimensional, of dimension $N$, where $N={\rm deg}\, r$.
  Assume $r$ analytic at the origin, and let $e_1,\ldots, e_N$ be a basis of $\mathfrak L(r)$.
  There exists an observable pair $(G,T)\in\mathbb C^{1\times N}\times \mathbb C^{N\times N}$ such that
  \begin{equation}
\label{Zr6789}
    Z_r(z)\stackrel{\rm def.}{=}\begin{pmatrix}e_1(z)&\cdots &e_N(z)\end{pmatrix}=G(I_N-zT)^{-1}
  \end{equation}
  Furthermore, if $r$ is real, meaning $r(z)=\overline{r(\overline{z})}$, the matrices $G$ and $T$ may be chosen to have real components.
\end{lemma}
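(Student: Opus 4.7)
The plan is to first establish the dimension claim, then build the realization via the backward-shift realization at the origin, and finally address the reality statement through a conjugation symmetry.

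First I would argue that $\mathfrak L(r)$ is finite-dimensional with $\dim \mathfrak L(r)=N$. Since $r=p/q$ is scalar rational with $p,q$ coprime and $\deg p=N\geq \deg q$, its McMillan degree is $N$. By the realization theory recalled just above (and in \cite{bgk1,MR2663312,MR1393938}), any minimal realization of $r$ has state space of dimension equal to the McMillan degree, and the backward-shift-invariant space $\mathfrak L(r)$ is canonically isomorphic to this state space. Alternatively, one can argue directly: $\mathfrak L(r)$ is clearly invariant under each $R_a$ (since $R_a R_b = (R_a - R_b)/(a - b)$ relates the generators), the span is non-zero, and an explicit basis can be read off a partial fraction expansion of $r$ to see that its dimension is $N$.

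Next, since $r$ is analytic at the origin, I would apply the backward-shift realization proposition with $a=0$: there exist operators $A:\mathfrak L(r)\to\mathfrak L(r)$, $F:\CC\to\mathfrak L(r)$, $G_0:\mathfrak L(r)\to\CC$, $H_0:\CC\to\CC$ with $Af=(f(z)-f(0))/z$ and $G_0 f=f(0)$. Fix any basis $e_1,\dots,e_N$ of $\mathfrak L(r)$, let $T\in\CC^{N\times N}$ be the matrix of $A$ in this basis, and let $G=\begin{pmatrix}e_1(0)&\cdots&e_N(0)\end{pmatrix}\in\CC^{1\times N}$ be the matrix of $G_0$. For any $f=\sum_i c_i e_i\in\mathfrak L(r)$ with coordinate vector $c\in\CC^N$ we have $f(z)=f(0)+z(Af)(z)$; iterating this identity $k$ times and using that $(Af)$ has coordinate vector $Tc$, we obtain
\[
f(z)=\sum_{\ell=0}^{k-1} z^\ell GT^\ell c + z^k (A^k f)(z).
\]
Since $f$ is analytic at $0$, letting $k\to\infty$ and identifying Taylor coefficients gives $f(z)=G(I_N-zT)^{-1}c$ in a neighborhood of $0$; since both sides are rational they agree wherever both are defined. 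Taking $c$ to be the standard basis vectors yields $Z_r(z)=G(I_N-zT)^{-1}$. Observability is now immediate: if $c\in\bigcap_{\ell\geq 0}\ker GT^\ell$, then every Taylor coefficient of $G(I_N-zT)^{-1}c=Z_r(z)c=\sum_i c_i e_i$ vanishes, so $\sum_i c_i e_i\equiv 0$, and linear independence of $e_1,\dots,e_N$ forces $c=0$.

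For the real case, I would exploit the conjugation $\tau:f(z)\mapsto\overline{f(\bar z)}$. If $r(z)=\overline{r(\bar z)}$, then for every $a\in\mathcal U(r)$ with $\bar a\in\mathcal U(r)$, $\tau$ sends the generator $(r(z)-r(a))/(z-a)$ to $(r(z)-r(\bar a))/(z-\bar a)$, which is again a generator of $\mathfrak L(r)$. Hence $\tau$ is a conjugate-linear involution of $\mathfrak L(r)$, and its $+1$-eigenspace $\mathfrak L(r)^\tau$ is a real form of $\mathfrak L(r)$ of real dimension $N$. Choosing $e_1,\dots,e_N$ from $\mathfrak L(r)^\tau$ gives a basis whose evaluation at the real point $0$ is real (so $G$ has real entries) and whose image under $A$, which preserves $\mathfrak L(r)^\tau$ because $A$ commutes with $\tau$, has real coordinates in this basis (so $T$ has real entries).

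The main obstacle is the dimension equality $\dim\mathfrak L(r)=N$: the inequality $\dim\mathfrak L(r)\leq N$ follows quickly from realization theory, but pinning down equality uses the coprimality of $p$ and $q$ together with the degree hypothesis $\deg p\geq\deg q$ in an essential way, since otherwise cancellations in $(p(z)q(a)-p(a)q(z))/((z-a)q(z)q(a))$ could shrink the span.
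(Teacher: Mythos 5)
Your proposal is correct, but it follows a genuinely different route from the paper. The paper's proof is essentially one identity: after reducing (via a Moebius transformation) to the case where $0$ is not a pole, it writes a minimal realization $r(z)=d+zG(I_N-zT)^{-1}b$ of size $N$ and uses
\[
\frac{r(z)-r(w)}{z-w}=G(I_N-zT)^{-1}(I_N-wT)^{-1}b,
\]
so that every generator of $\mathfrak L(r)$ is $G(I_N-zT)^{-1}v$ with $v=(I_N-wT)^{-1}b$; controllability gives that these $v$ span $\mathbb C^N$ and observability gives linear independence of the entries of $G(I_N-zT)^{-1}$, yielding both $\dim\mathfrak L(r)=N$ and the representation \eqref{Zr6789} in one stroke, while the reality statement is handled by citing \cite{abgr1} or by combining real realizations of $p$ and $q$. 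You instead take the dimension count from McMillan-degree/minimality of the shift realization, build $(G,T)$ intrinsically from the operator $R_0$ restricted to $\mathfrak L(r)$, recover $Z_r(z)=G(I_N-zT)^{-1}$ by iterating $f(z)=f(0)+z(R_0f)(z)$ and matching Taylor coefficients, prove observability directly from linear independence of the basis, and obtain real $G,T$ via the conjugate-linear involution $f(z)\mapsto\overline{f(\bar z)}$ and a real form of $\mathfrak L(r)$. Your version is more self-contained on observability and on the reality claim (where the paper only gives a reference), and it works directly with an arbitrary basis as in the statement; the paper's version is more compact because the difference-quotient identity does all the work at once. Two small points you should make explicit: the $R_0$-invariance of $\mathfrak L(r)$ (needed so that $T$ is well defined) deserves a line — the resolvent identity handles the generators with $a\neq 0$, and the generator at $a=0$ is reached as a limit inside a finite-dimensional, hence closed, span — and your dimension step leans on the standard fact that the shift realization on $\mathfrak L(r)$ is minimal of size equal to the McMillan degree, which is acceptable here since the paper relies on exactly the same realization-theoretic background.
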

\begin{proof}
  The claim is invariant under a Moebius transform, and we will assume that $0$ is not a pole of $r$. We can write $r$ in a realized form as
  \begin{equation}
    \label{real-r}
r(z)=d+zG(I_N-zT)^{-1}b,
\end{equation}
and thus
\begin{equation}
  \label{bghjkl}
\frac{r(z)-r(w)}{z-w}=G(I_N-zT)^{-1}(I-wT)^{-1}b,
\end{equation}
from which the result follows.
  For the last claim see \cite{abgr1}. A more direct way is to write real realizations of $p$ and $q$ and get a realization for $p/q$.
    \end{proof}

\begin{remark}{\rm We note that two different choices of basis will amount to two different $Z_r$ differing by an invertible multiplicative matrix on the left, corresponding to the change of basis.
    In the paper we always stick to a pre-assigned choice, unless stated otherwise.}
\end{remark}

\begin{example}
    When $r(z)=p(z)$ we can choose $e_n(z)=z^{n-1}$, $n=1,\ldots, N$ and then in \eqref{real-r}, we have $G=\begin{pmatrix}1&1&\cdots& 1\end{pmatrix}$ and $T$ is the nilpotent matrix
    \[
      T=\begin{pmatrix}0&1&0&&\cdots &0\\
                       0&0&1&0&\cdots&0\\
                       0&& &\ddots&&\\
\vdots&& &&&\vdots\\
0&0 &&\cdots&0&1\\
0&0 &&\cdots&0&0\end{pmatrix}
      \].
    \end{example}

    \begin{example}
      When $r(z)$ is a finite Blaschke product of the open unit disk, with pairwise different zeros, we can write
      \[
r(z)=\prod_{n=1}^N\frac{z-a_n}{1-z\overline{a_n}}.
\]
We have, as sets, $\mathfrak L(r)=\mathbf H_2(\mathbb D)\ominus r\mathbf H_2(\mathbb D)$, and we can choose $e_n(z)=\frac{1}{1-z\overline{a_n}}$, so that here too
$G=\begin{pmatrix}1&1&\cdots& 1\end{pmatrix}$ and $T$ is the diagonal matrix ${\rm diag}\,(\overline{a_1},\overline{a_2},\ldots, \overline{a_N})$.
\end{example}

\begin{lemma}
  \label{rtyuiop1}
  Let $r(z)=\frac{p(z)}{q(z)}$ be a rational function of degree $N\geq 1$.
  Elements of $\mathfrak L(r)$ are exactly rational functions of the form $\frac{m(z)}{q(z)}$, where $m$ is a polynomial of degree at most $N-1$. In particular,
  if $m(z)\not\equiv 0$, then it cannot have more than $N-1$ zeros.
\end{lemma}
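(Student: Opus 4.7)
The plan is to argue by direct computation and then a dimension count. First I would compute the generator: for $a\in\mathcal U(r)$ (so $q(a)\neq 0$),
\[
\frac{r(z)-r(a)}{z-a}=\frac{p(z)q(a)-p(a)q(z)}{(z-a)\,q(a)\,q(z)}.
\]
The numerator is a polynomial in $z$ which vanishes at $z=a$, hence is divisible by $(z-a)$. Since $\deg p=N$ and $\deg q\le N$, the numerator has degree at most $N$ in $z$, so after dividing by $(z-a)$ one obtains a polynomial $m_a(z)$ of degree at most $N-1$; noting that $q(a)$ is a nonzero constant, the generator has the form $m_a(z)/q(z)$ with $\deg m_a\le N-1$.

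Taking linear combinations, every element of $\mathfrak L(r)$ is of the form $m(z)/q(z)$ with $\deg m\le N-1$. This gives the inclusion in one direction.

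For the reverse inclusion, I would perform a dimension count. The set
\[
V=\left\{\,\tfrac{m(z)}{q(z)}\,:\,m\in\mathbb C[z],\ \deg m\le N-1\,\right\}
\]
is a vector space of dimension $N$, since the map $m\mapsto m/q$ is linear and injective and $\{1,z,\ldots,z^{N-1}\}$ spans the polynomials of degree at most $N-1$. By the inclusion just proved, $\mathfrak L(r)\subseteq V$, and by Lemma \ref{rtyuiop} (the dimension statement already established) $\dim\mathfrak L(r)=N=\dim V$, forcing $\mathfrak L(r)=V$.

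The final assertion is then immediate: if $m\not\equiv 0$ and $\deg m\le N-1$, the fundamental theorem of algebra gives at most $N-1$ zeros. No real obstacle is anticipated; the only point worth care is ensuring that $q(a)\neq 0$ at points of analyticity (which follows from $p$ and $q$ being coprime, so the choice $a\in\mathcal U(r)$ is the natural one).
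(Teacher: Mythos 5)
Your proof is correct and rests on the same key identity as the paper, namely $\frac{r(z)-r(a)}{z-a}=\frac{p(z)q(a)-p(a)q(z)}{(z-a)\,q(a)\,q(z)}$, which is essentially all the paper writes down. Your additional dimension count via Lemma \ref{rtyuiop} to get the reverse inclusion (the ``exactly'') is a legitimate completion of a step the paper leaves implicit, not a different method.
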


\begin{proof} This follows from
  \[
    \begin{split}
      \frac{r(z)-r(a)}{z-a}&=\frac{\left(\dfrac{p(z)q(a)-q(z)p(a)}{z-a}\right)}{q(z)q(a)}.
      \end{split}
      \]
\end{proof}

\begin{corollary}
  Let $r(z)$ be a rational function of degree $N$, with numerator of the form $\prod_{i=1}^M(z-w_i)^{m_i}$ with $\sum_{i=1}^Mm_i=N$, and
  let $\xi\in\mathbb C^N$ be such that
  \begin{equation}
    \label{zeroes321123}
    Z_r^{(j)}(w_i)\xi=0,\quad i=1,\ldots, M,\,\, j=0,\ldots, m_i-1.
\end{equation}
Then $\xi=0$.
\label{coro210}
\end{corollary}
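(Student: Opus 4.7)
The plan is to consider the scalar function $f(z) := Z_r(z)\xi = \sum_{j=1}^N \xi_j e_j(z)$ attached to $\xi$, and to show that the vanishing conditions \eqref{zeroes321123} force $f$ to be identically zero; linear independence of the basis $e_1,\dots,e_N$ of $\mathfrak L(r)$ then yields $\xi=0$.

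The key input is Lemma \ref{rtyuiop1}, which says that every element of $\mathfrak L(r)$ can be written as $m(z)/q(z)$ with $m$ a polynomial of degree at most $N-1$. Applying this to $f$, one gets $f(z)=m(z)/q(z)$ with $\deg m\le N-1$. The assumption
\[
Z_r^{(j)}(w_i)\xi=0,\qquad i=1,\dots,M,\;\; j=0,\dots,m_i-1,
\]
means exactly that $f^{(j)}(w_i)=0$ for $j=0,\dots,m_i-1$, i.e.\ $f$ vanishes to order at least $m_i$ at each $w_i$. Because the $w_i$ are the zeros of the numerator $p$ of $r$ and $p,q$ are coprime by Hypothesis 1.1, one has $q(w_i)\neq 0$ for every $i$. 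Hence the order of vanishing of $m$ at $w_i$ equals the order of vanishing of $f$ at $w_i$; thus $m$ vanishes to order at least $m_i$ at $w_i$ for each $i$.

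Counting zeros of $m$ with multiplicity, the total is at least $\sum_{i=1}^M m_i = N$, while $\deg m\le N-1<N$. A nonzero polynomial of degree $\le N-1$ has at most $N-1$ zeros counted with multiplicity, so necessarily $m\equiv 0$, whence $f\equiv 0$. Since $(e_1,\dots,e_N)$ is a basis of $\mathfrak L(r)$, the coefficients $\xi_1,\dots,\xi_N$ must all vanish, which gives $\xi=0$.

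I do not expect any serious obstacle: the argument is essentially bookkeeping, and the only point requiring any care is the passage from vanishing of $f$ to vanishing of $m$ with the same multiplicity, which rests on the coprimality of $p$ and $q$ assumed in Hypothesis 1.1. The translation of the derivative conditions into the orders of vanishing of $f$ is a routine application of the Leibniz rule together with $q(w_i)\neq 0$.
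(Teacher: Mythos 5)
Your proof is correct and follows essentially the same route as the paper: view $Z_r(z)\xi$ as an element of $\mathfrak L(r)$, invoke Lemma \ref{rtyuiop1} to write it as $m(z)/q(z)$ with $\deg m\le N-1$, and conclude from the $N$ zeros counted with multiplicity that $m\equiv 0$, hence $\xi=0$ by linear independence of the $e_n$. Your explicit verification that $q(w_i)\neq 0$ (via coprimality) so that the vanishing orders of $f$ and $m$ agree is a detail the paper leaves implicit, but it is the same argument.
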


\begin{proof}
  Indeed, the function $z\mapsto Z_r(z)\xi$ belongs to $\mathfrak L(r)$. Condition \eqref{zeroes321123} expresses that it has zeros of total multiplicity
  $\sum_{i=1}^M m_i=N$, and so by the previous result we have $Z_r(z)\xi\equiv 0$, and so $\xi=0$ since the entries of $Z_r(z)$ are linearly independent.
\end{proof}

From this corollary we obtain:

\begin{theorem}
  \label{unique123321}
  Let $r(z)$ be a rational function of degree $N$, with numerator of the form $\prod_{i=1}^M(z-w_i)^{m_i}$ with $\sum_{i=1}^Mm_i=N$. Assume that
  \begin{equation}
    \label{tyu}
    Z_r(z)F(r(z))\equiv 0.
  \end{equation}
  Then, $F\equiv 0$.
\end{theorem}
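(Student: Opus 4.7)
The plan is to expand $F$ in a power series around the origin and show, by induction on the index, that all its Taylor coefficients vanish; the real content is extracted by one application of Corollary~\ref{coro210} per coefficient.

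I would begin with the base case, $F(0)=0$. Writing $F(\zeta)=F(0)+\zeta H(\zeta)$ with $H$ analytic near $0$, one has
$$Z_r(z)F(r(z))=Z_r(z)F(0)+Z_r(z)\,r(z)\,H(r(z)).$$
Since $p$ and $q$ are coprime, no $w_i$ is a zero of $q$, so $r=p/q$ vanishes at $w_i$ to exact order $m_i$; consequently $r(z)H(r(z))$ vanishes at $z=w_i$ to order at least $m_i$, and the same is true for its product with $Z_r(z)$. Differentiating the identity $Z_r(z)F(r(z))\equiv 0$ up to order $m_i-1$ at $z=w_i$ therefore yields
$$Z_r^{(j)}(w_i)F(0)=0,\qquad i=1,\ldots,M,\ j=0,\ldots,m_i-1,$$
which is precisely condition \eqref{zeroes321123} for the vector $\xi=F(0)$, and Corollary~\ref{coro210} gives $F(0)=0$.

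For the inductive step, having shown that $F$ vanishes at $0$ to order at least $k$, factor $F(\zeta)=\zeta^k G(\zeta)$ with $G$ analytic at $0$. The hypothesis becomes
$$r(z)^k\bigl(Z_r(z)G(r(z))\bigr)\equiv 0.$$
Since $r$ is a nonzero rational function, this forces $Z_r(z)G(r(z))=0$ on the open dense set where $r(z)\neq 0$, and analytic continuation extends the identity to the full domain of definition. The base case applied to $G$ yields $G(0)=0$, so $F$ vanishes at $0$ to order at least $k+1$. Iterating, every Taylor coefficient of $F$ at the origin is zero, hence $F\equiv 0$ on the connected component of $0$ in its domain of analyticity.

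The main technical point is the vanishing-order bookkeeping at each $w_i$: the fact that $r(z)H(r(z))=O((z-w_i)^{m_i})$ relies on the coprimality of $p$ and $q$ (to know the order of vanishing of $r$ at $w_i$ is exactly $m_i$) together with the chain rule. Once this is in place, Corollary~\ref{coro210} does the substantive work, and the inductive step reduces to a routine factor-and-cancel.
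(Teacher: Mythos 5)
Your proof is correct and follows essentially the same route as the paper's: establish $Z_r^{(j)}(w_i)F(0)=0$ for $j=0,\ldots,m_i-1$ using that $r$ vanishes to order $m_i$ at each $w_i$, invoke Corollary~\ref{coro210} to get $F(0)=0$, then factor out a power of the variable and iterate to kill all Taylor coefficients. Your bookkeeping via $F(\zeta)=F(0)+\zeta H(\zeta)$ neatly replaces the paper's explicit derivative expansions, and you make explicit the division by $r(z)^k$ (density plus analytic continuation) that the paper passes over silently, but these are presentational refinements of the same argument.
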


\begin{proof}
  We have
  \begin{eqnarray*}
    (    Z_r(z)F(r(z)))^{(1)}&=&(Z_r(z))^{(1)}F(r(z))+Z_r(z)r^{(1)}(z)F^{(1)}(r(z))\\
    (    Z_r(z)F(r(z)))^{(2)}&=&(Z_r(z))^{(2)}F(r(z))+2(Z_r(z))^{(1)}r^{(1)}(z)F^{(1)}(r(z))+\\
    \nonumber
    &&\hspace{1mm}+Z_r(z)r^{(2)}(z)F^{(1)}(r(z))+Z_r(z)(r^{(1)}(z))^2F^{(2)}(r(z)),
                                           \nonumber
  \end{eqnarray*}
  and similarly for derivatives of higher order. Using that $r^{(j)}(w_i)=0$ for $i=1,\ldots, M,\,\, j=0,\ldots, m_i-1$, we get
  \begin{equation}
    \label{zzeroes321123}
    Z_r^{(j)}(w_i)F(0)=0,\quad i=1,\ldots, M,\,\, j=0,\ldots, m_i-1,
  \end{equation}
  so that $F(0)=0$ in view of Corollary \ref{coro210}. We write $F(z)=zF_1(z)$. Equation \eqref{tyu} becomes
  \[
    Z_r(z)F_1(r(z))\equiv 0.
  \]
  Reiterating the previous argument on $Z_r(z)F_1(r(z))$ we have that $F^{(1)}(0)=F_1(0)=0.$ Iterating we get that the coefficients of
  the Taylor series expansion of $F$ at the origin all vanish, and hence $F\equiv 0$.
\end{proof}

The results in the next corollary play a key role in the sequel; in particular see the proof of Theorem \ref{tm2021} in Section \ref{sec2021} on the Cuntz relations. Note that the left hand side of  \eqref{totototo} below is of the form \eqref{tyuiop}.

\begin{corollary}
\label{corocoro}
In the previous notation, let $\alpha\in\Omega(r)$. Then, the matrix $\mathsf G\in\mathbb C^{N\times N}$ defined by
\begin{equation}
g_{nj}=(e_n(w_j))_{n,j=1}^N
\end{equation}
is invertible.
Let $F_1,\ldots, F_N$ be analytic in a connected neighborhood $\Omega$ of $\alpha$. Then
\begin{equation}
  \label{totototo}
  \sum_{n=1}^Ne_n(z)F_n(r(z))\equiv 0,\quad z\in r^{-1}(\Omega),
  \end{equation}
if and only if $F_n(z)\equiv 0$ for $z\in\Omega$ and $n=1,\ldots, N$.
\end{corollary}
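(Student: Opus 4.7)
The plan is to handle the two assertions separately, proving the invertibility of $\mathsf G$ first and then using it as the pointwise lever for the second claim via analytic continuation.

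For invertibility I would show that $\mathsf G^\top$ has trivial kernel. If $\xi=(\xi_1,\ldots,\xi_N)^\top$ satisfies $\mathsf G^\top\xi=0$, then $Z_r(w_j)\xi=\sum_n e_n(w_j)\xi_n=0$ for every $j=1,\ldots,N$, so the function $h(z):=Z_r(z)\xi\in\mathfrak L(r)$ vanishes at the $N$ preimages $w_1,\ldots,w_N$, which are pairwise distinct by the very definition of $\alpha\in\Omega(r)$. By Lemma \ref{rtyuiop1}, $h=m/q$ for a polynomial $m$ with $\deg m\le N-1$; since the $w_j$ are not poles of $r$ (as noted at the start of the proof of Proposition 2.1, $q(w_j)\neq 0$), they are genuine zeros of $m$. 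A polynomial of degree at most $N-1$ with $N$ distinct zeros is identically zero, so $h\equiv 0$, and linear independence of the basis $e_1,\ldots,e_N$ forces $\xi=0$.

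For the equivalence, the ``if'' direction is immediate. For the ``only if'' direction, fix any $\alpha'\in\Omega\cap\Omega(r)$, and let $w_1',\ldots,w_N'$ be its pairwise distinct preimages under $r$; these lie in $r^{-1}(\Omega)$, so evaluating \eqref{totototo} at $z=w_j'$ and using $r(w_j')=\alpha'$ yields
\[
\sum_{n=1}^N e_n(w_j')F_n(\alpha')=0,\qquad j=1,\ldots,N,
\]
which is exactly the linear system $\mathsf G(\alpha')^\top(F_1(\alpha'),\ldots,F_N(\alpha'))^\top=0$. By the first part applied at $\alpha'$, this matrix is invertible, so $F_n(\alpha')=0$ for every $n$.

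It remains to propagate vanishing from $\Omega\cap\Omega(r)$ to all of $\Omega$. The complement of $\Omega(r)$ is the finite set of critical values of $r$ (where $r'$ vanishes, together with $r(\infty)$), so $\Omega(r)$ is open and dense and $\Omega\cap\Omega(r)$ is a nonempty open subset of the connected set $\Omega$; by the identity principle each analytic $F_n$ vanishes identically on $\Omega$. The only step requiring care is the first one, where Lemma \ref{rtyuiop1} is what upgrades ``$N$ zeros'' to ``identically zero'' in $\mathfrak L(r)$; everything else is bookkeeping and analytic continuation.
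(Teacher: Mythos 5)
Your argument is correct, and while the first half coincides with the paper's (the paper also shows $\mathsf G$ has trivial kernel by observing that a nonzero $\xi$ would give an element $Z_r(z)\xi$ of $\mathfrak L(r)$ with $N$ distinct zeros, contradicting Lemma \ref{rtyuiop1}), your treatment of the equivalence takes a genuinely different route. The paper stays at the single point $\alpha$: after deducing $F_n(\alpha)=0$ from the invertibility of $\mathsf G$, it factors $F_n(z)=(z-\alpha)G_n(z)$, divides \eqref{totototo} by $r(z)-\alpha$, and iterates, so that all Taylor coefficients of the $F_n$ at $\alpha$ vanish and the identity principle on the connected set $\Omega$ finishes the proof (the same peeling argument as in Theorem \ref{unique123321}). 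You instead move the evaluation point: for every regular value $\alpha'\in\Omega\cap\Omega(r)$ you apply the first part at $\alpha'$ to get $F_n(\alpha')=0$, and then invoke the identity principle, using that the complement of $\Omega(r)$ is finite (critical values of $r$ and possibly $r(\infty)$), so $\Omega\cap\Omega(r)$ is a nonempty open subset of $\Omega$. Your version is shorter and avoids the induction, at the cost of the extra (true, but not proved in the paper) genericity fact about $\Omega(r)$; the paper's version needs only the data at the one given point $\alpha$ and no density statement. Both proofs rest on the same key lemma, Lemma \ref{rtyuiop1}, and both are complete.
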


\begin{proof} Let $c=\begin{pmatrix}c_1&c_2&\cdots &c_N\end{pmatrix}^t\in\mathbb C^N$, and assume that $\mathsf Gc=0$. Let $g(z)=\sum_{n=1}^N c_ne_n(z)$. The function $g\in\mathfrak L(r)$, and
the condition $\mathsf Gc=0$ implies that $g(w_i)=0$ for $i=1,\ldots, N$. By the previous lemma $g(z)\equiv 0$ and hence $c=0$ since $e_1,\ldots, e_N$ are linearly independent.\smallskip

Assume now that \eqref{totototo} holds. Setting $z=w_i$, $i=1,\ldots, N$ we have that
\[
\sum_{n=1}^N e_n(w_i)F_n(\alpha)=0,\quad i=1,\ldots, N
\]
and so $F_1(\alpha)=\cdots =F_N(\alpha)$ in view of the first part of the corollary. For $n=1,\ldots, N$ write now $F_n(z)=(z-\alpha)G_n(z)$, where $G_n$ is analytic in $\Omega$.
We have
\[
  \sum_{n=1}^Ne_n(z)r(z)G_n(r(z))\equiv 0,\quad z\in r^{-1}(\Omega),
\]
and so
\[
  \sum_{n=1}^Ne_n(z)G_n(r(z))\equiv 0,\quad z\in r^{-1}(\Omega).
\]
Reiterating the previous argument we get $G_1(\alpha)=\cdots =G_N(\alpha)=0$. We then get that the Taylor series around $\alpha$ for $F_1,\ldots, F_N$ vanish identically, and hence the functions
$F_1,\ldots, F_N$ vanish identically in $\Omega$ since the latter is connected.
\end{proof}

\begin{proposition}
Let $r$ be a rational function of degree $N$ and let $\alpha\in\Omega(r)$. Then, for $f\in\mathfrak L(r)$, we have
\begin{equation}
  \label{bastille}
          \frac{f(z)}{r(z)-\alpha}=\sum_{n=1}^N\frac{f(w_n)}{r^\prime(w_n)(z-w_n)}
\end{equation}
and, in particular,
        \begin{equation}
          \frac{Z_r(z)}{r(z)-\alpha}=\sum_{n=1}^N\frac{Z_r(w_n)}{r^\prime(w_n)(z-w_n)}.
\label{2.5}
        \end{equation}
                  \end{proposition}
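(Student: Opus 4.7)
My plan is to reduce \eqref{bastille} to a standard partial fraction expansion, exploiting the explicit description of $\mathfrak L(r)$ provided by Lemma \ref{rtyuiop1}.

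First I would write $r(z)=p(z)/q(z)$ and, given $f\in\mathfrak L(r)$, use Lemma \ref{rtyuiop1} to write $f(z)=m(z)/q(z)$ where $m$ is a polynomial of degree at most $N-1$. Substituting into the left-hand side,
\[
\frac{f(z)}{r(z)-\alpha}=\frac{m(z)}{p(z)-\alpha q(z)}.
\]
Since $\alpha\in\Omega(r)$, the polynomial $p(z)-\alpha q(z)$ has degree exactly $N$ with $N$ simple roots $w_1,\ldots,w_N$ (the case $\deg p=\deg q$ with $\alpha=r(\infty)$ is excluded precisely because then that polynomial would drop in degree and could not have $N$ roots). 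Because $\deg m\le N-1<N$, the rational function above is proper and admits a partial fraction expansion of the form $\sum_{n=1}^N A_n/(z-w_n)$.

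Next I would compute the residues:
\[
A_n=\frac{m(w_n)}{p^\prime(w_n)-\alpha q^\prime(w_n)}.
\]
Using \eqref{rprime} from the proof of the earlier partial fraction proposition, which gives $p^\prime(w_n)-\alpha q^\prime(w_n)=r^\prime(w_n)q(w_n)$, this simplifies to
\[
A_n=\frac{m(w_n)/q(w_n)}{r^\prime(w_n)}=\frac{f(w_n)}{r^\prime(w_n)},
\]
which is exactly \eqref{bastille}. Formula \eqref{2.5} then follows immediately by applying \eqref{bastille} componentwise: each entry $e_j$ of $Z_r(z)$ lies in $\mathfrak L(r)$ by construction.

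There is no real obstacle here; the only point requiring care is verifying that $p-\alpha q$ has degree exactly $N$ (so that the partial fraction expansion has no polynomial part) and that its derivative at $w_n$ equals $r^\prime(w_n)q(w_n)$, both of which are already handled by the material preceding the statement. An alternative route would be to multiply the identity \eqref{newtotoche4} by $f(z)$ and show that the spurious terms $f(z)/(r(\infty)-\alpha)+\sum_n(f(z)-f(w_n))/(r^\prime(w_n)(z-w_n))$ cancel when $f\in\mathfrak L(r)$, but the partial-fraction route above is more transparent and avoids this bookkeeping.
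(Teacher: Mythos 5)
Your proof is correct and follows essentially the same route as the paper: a partial fraction expansion of $f(z)/(r(z)-\alpha)$ over the simple roots of $p-\alpha q$, with the residues simplified via \eqref{rprime}. The only difference is organizational — the paper verifies the identity on the spanning functions $z\mapsto \frac{r(z)-r(\beta)}{z-\beta}$ and concludes by linearity, whereas you invoke Lemma \ref{rtyuiop1} to write a general $f\in\mathfrak L(r)$ as $m/q$ with $\deg m\le N-1$ and expand directly; both versions reduce to the same residue computation, and your check that $\alpha\in\Omega(r)$ forces $\deg(p-\alpha q)=N$ is the right point to flag.
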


                  \begin{proof}
It is enough to prove \eqref{bastille} for $f(z)=\dfrac{r(z)-r(\beta)}{z-\beta}$ since $\mathfrak L(r)$ is finite dimensional and spanned by such functions.
Since $\dfrac{q(\beta)p(z)-p(\beta)q(z)}{z-\beta}$ is a polynomial of degree strictly less that $N$, we then have the partial fraction expansion:
\[
  \begin{split}
    \frac{f(z)}{r(z)-\alpha}&=\frac{\left(\dfrac{q(\beta)p(z)-p(\beta)q(z)}{z-\beta}\right)}{q(\beta)(p(z)-\alpha q(z))}\\
    \\
    &=\sum_{n=1}^N\frac{\left(\dfrac{q(\beta)p(w_n)-p(\beta)q(w_n)}{w_n-\beta}\right)}{q(\beta)(p^\prime(w_n)-\alpha q^\prime(w_n))}\\
\\
    &=\sum_{n=1}^N\frac{\left(\dfrac{q(\beta)p(w_n)-p(\beta)q(w_n)}{q(w_n)(w_n-\beta)}\right)}{\dfrac{q(\beta)(p^\prime(w_n)-\alpha q^\prime(w_n))}{q(w_n)}}\\
\\
&=\sum_{n=1}^N\frac{f(w_n)}{r^\prime(w_n)(z-w_n)},\\
&
  \end{split}
\]
where we have used another time \eqref{rprime}.
\end{proof}

The following corollary shows that the operators $R^{(r)}_\alpha$ are well suited for functions of the form \eqref{tyuiop}

\begin{corollary}
\label{corocoro1}
Let $r$ be a rational function of degree $N$.
Let $w_1(\A),\ldots, w_N(\A)$ the roots, assumed simple, of the equation $r(z)=\alpha$, and let $f$ be analytic in a neighborhood of $\left\{w_1(\A),\ldots, w_N(\A)\right\}$ and of the form
$f(z)=Z_r(z)F(r(z))$, where the function $F$ is $\mathbb C^N$-valued and
analytic in a neighborhood of the point $\alpha$. Then,
\begin{equation}
  \label{formulacoro}
(R^{(r)}_\alpha f)(z)=Z_r(z)(R_\alpha F)(r(z)).
\end{equation}
\end{corollary}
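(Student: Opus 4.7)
The plan is to verify the identity by direct computation starting from the definition \eqref{newtotoche} of $R^{(r)}_\alpha$, exploiting the partial fraction formula \eqref{2.5} to collapse the subtracted sum into a closed form. First I would treat the generic points $z$ with $r(z)\neq\alpha$, and then extend to the exceptional fibre $r^{-1}\{\alpha\}$ by analytic continuation.

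For the generic case, I substitute $f(z)=Z_r(z)F(r(z))$ into the first branch of \eqref{newtotoche}. The key observation is that since $r(w_n)=\alpha$ for each $n=1,\ldots,N$, we have $f(w_n)=Z_r(w_n)F(\alpha)$, and the constant vector $F(\alpha)$ can be pulled out of the sum, yielding
\[
(R^{(r)}_\alpha f)(z)=\frac{Z_r(z)F(r(z))}{r(z)-\alpha}-\Bigl(\sum_{n=1}^N\frac{Z_r(w_n)}{r^\prime(w_n)(z-w_n)}\Bigr)F(\alpha).
\]
Now \eqref{2.5} identifies the parenthesized sum with $Z_r(z)/(r(z)-\alpha)$, and combining the two terms gives
\[
(R^{(r)}_\alpha f)(z)=Z_r(z)\,\frac{F(r(z))-F(\alpha)}{r(z)-\alpha}=Z_r(z)(R_\alpha F)(r(z)),
\]
which is precisely \eqref{formulacoro}. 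The dimensions line up because $Z_r(z)$ is a $1\times N$ row, $F$ and $R_\alpha F$ are $\mathbb C^N$-valued, and $R_\alpha$ acts componentwise on $F$.

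For $z\in r^{-1}\{\alpha\}$ I would argue by analytic continuation: both sides of \eqref{formulacoro} are analytic in $z$ on a neighbourhood of each $w_k$ (the left-hand side extends analytically by the construction of $R^{(r)}_\alpha$ as recorded in the second branch of \eqref{newtotoche}, the right-hand side by analyticity of $R_\alpha F$ near $\alpha$ composed with $r$), so their agreement off the finite set $r^{-1}\{\alpha\}$ forces agreement everywhere by the identity principle. There is no real obstacle: the whole argument is a one-line calculation once one recognises that \eqref{2.5} is exactly the identity needed to recombine $F(\alpha)/(r(z)-\alpha)$ after the $f(w_n)=Z_r(w_n)F(\alpha)$ substitution. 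The only mild care point is checking the removable-singularity claim at the $w_k$, which is immediate from the fact that $\alpha\in\Omega(r)$ ensures all $w_n$ are simple roots of $r(z)-\alpha$.
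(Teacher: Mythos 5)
Your argument is correct and follows essentially the same route as the paper: substitute $f(z)=Z_r(z)F(r(z))$ into \eqref{newtotoche}, use $f(w_n)=Z_r(w_n)F(\alpha)$ to factor out $F(\alpha)$, and apply \eqref{2.5} to recombine the terms into $Z_r(z)(R_\alpha F)(r(z))$. The additional remark about the fibre $r^{-1}\{\alpha\}$ via analytic continuation is a harmless extra precaution the paper leaves implicit.
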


\begin{proof}
  We can write
\[
\begin{split}
(R^{(r)}_\alpha f)(z)&=\frac{Z_r(z)F(r(z))}{r(z)-\alpha}-\sum_{n=1}^N\frac{Z_r(w_n)F(r(w_n))}{r^\prime(w_n)(z-w_n)}\\
&=\frac{Z_r(z)F(r(z))}{r(z)-\alpha}-\sum_{n=1}^N\frac{Z_r(w_n)F(\alpha)}{r^\prime(w_n)(z-w_n)}\\
&=\frac{Z_r(z)F(r(z))}{r(z)-\alpha}-\sum_{n=1}^N\frac{Z_r(w_n)}{r^\prime(w_n)(z-w_n)}F(\alpha)\\
&=\frac{Z_r(z)F(r(z))}{r(z)-\alpha}-\frac{Z_r(z)}{r(z)-\alpha}F(\alpha)\\
&=Z_r(z)(R_\alpha F)(r(z)),
\end{split}
\]
where we have used \eqref{2.5} to conclude.
\end{proof}

\section{The associated symmetric matrix}
\setcounter{equation}{0}

To a given rational function we can associate a symmetric matrix which plays an important role in the sequel.
The following lemma seems to be new and possibly of independent interest.

\begin{lemma}
  \label{lemm26}
  Let $r$ be a rational function of degree $N$,  $\alpha\in\Omega(r)$ and let $w_1(\A),\ldots, w_N(\A)$  be the (simple)
  roots of $r(z)=\alpha$. Let $f,g\in\mathfrak L(r)$, then the sum
  \begin{equation}
   \sum_{n=1}^N \frac{f(w_n(\A))g(w_n(\A))}{r^\prime(w_n(\A))}
 \end{equation}
  does not depend on $\alpha$.
\end{lemma}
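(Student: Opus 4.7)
The plan is to reduce the claim to bilinearity in $(f,g)$ and then verify it on a convenient spanning set. Since the sum $B_\alpha(f,g):=\sum_{n=1}^N \frac{f(w_n(\alpha))g(w_n(\alpha))}{r'(w_n(\alpha))}$ is bilinear and $\mathfrak{L}(r)$ is spanned, by Definition \ref{Statespace}, by the functions $e_a(z)=\frac{r(z)-r(a)}{z-a}$ as $a$ runs over points of analyticity of $r$, it is enough to prove $\alpha$-independence for $f=e_a$, $g=e_b$. We may further restrict to $a,b$ outside $\{w_1(\alpha),\ldots,w_N(\alpha)\}$; the remaining values are handled by continuity (or by a limiting argument), since the sum is obviously jointly continuous in $(f,g,\alpha)$ on an appropriate open set.

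For such a pair, the key observation is $r(w_n)=\alpha$, which gives
\[
e_a(w_n)e_b(w_n)=\frac{(\alpha-r(a))(\alpha-r(b))}{(w_n-a)(w_n-b)}.
\]
Assuming first $a\neq b$, partial fractions yield $\frac{1}{(w_n-a)(w_n-b)}=\frac{1}{a-b}\left(\frac{1}{w_n-a}-\frac{1}{w_n-b}\right)$. The two resulting sums can be evaluated using formula \eqref{newtotoche4}: setting $z=a$ and $z=b$ respectively gives
\[
\sum_{n=1}^N\frac{1}{r'(w_n)(w_n-a)}=\frac{1}{r(\infty)-\alpha}-\frac{1}{r(a)-\alpha},
\]
and similarly for $b$. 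The $\frac{1}{r(\infty)-\alpha}$ contributions cancel between the two sums, leaving $\frac{1}{r(b)-\alpha}-\frac{1}{r(a)-\alpha}=\frac{r(a)-r(b)}{(r(a)-\alpha)(r(b)-\alpha)}$. Multiplying by the prefactor $\frac{(\alpha-r(a))(\alpha-r(b))}{a-b}=\frac{(r(a)-\alpha)(r(b)-\alpha)}{a-b}$ produces the clean cancellation
\[
B_\alpha(e_a,e_b)=\frac{r(a)-r(b)}{a-b},
\]
which is manifestly independent of $\alpha$.

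The coincident case $a=b$ is handled by differentiating \eqref{newtotoche4} with respect to $z$, which yields $\sum_{n=1}^N\frac{1}{r'(w_n)(a-w_n)^2}=\frac{r'(a)}{(r(a)-\alpha)^2}$; multiplying by $(\alpha-r(a))^2$ gives $B_\alpha(e_a,e_a)=r'(a)$, again independent of $\alpha$ and consistent with the limit $b\to a$ of the previous formula. The main potential obstacle is purely bookkeeping: one must ensure the $\frac{1}{r(\infty)-\alpha}$ term (which is finite since $\alpha\in\Omega(r)$ forces $\alpha\neq r(\infty)$) actually cancels, and that the reduction from general $f,g\in\mathfrak{L}(r)$ to $f=e_a$, $g=e_b$ is justified despite the apparent singularities when $a$ or $b$ collides with a preimage $w_n(\alpha)$. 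Both issues dissolve once one notes that the final closed form $\frac{r(a)-r(b)}{a-b}$ extends analytically and that bilinearity over a spanning set determines $B_\alpha$ uniquely.
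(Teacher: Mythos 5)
Your proof is correct, but it takes a genuinely different route from the paper's. Both arguments start with the same reduction, by bilinearity, to the spanning elements $f(z)=\frac{r(z)-r(a)}{z-a}$ and $g(z)=\frac{r(z)-r(b)}{z-b}$, but from there the paper argues by residues: it compares the integrals of $\frac{fg}{r-\alpha}$ and $\frac{fg}{r}$ over a large circle (they coincide because $\frac{fg}{(r-\alpha)r}$ decays like $z^{-2}$), and then shows that the residues at the zeros of $q$ do not depend on $\alpha$, so the sum in the lemma equals an $\alpha$-free contour integral up to an $\alpha$-free correction. You instead evaluate the sum in closed form: using $e_a(w_n)e_b(w_n)=\frac{(\alpha-r(a))(\alpha-r(b))}{(w_n-a)(w_n-b)}$, a partial-fraction split, and the already-established expansion \eqref{newtotoche4} evaluated at $z=a$ and $z=b$, you get $\sum_{n=1}^N\frac{e_a(w_n)e_b(w_n)}{r'(w_n)}=\frac{r(a)-r(b)}{a-b}$ (and $r'(a)$ when $a=b$); I checked the cancellations and they are right, including the $\frac{1}{r(\infty)-\alpha}$ term, which is indeed finite because $\alpha\in\Omega(r)$ forces $\alpha\neq r(\infty)$. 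Your route is more elementary (it only recycles \eqref{newtotoche4} rather than a residue analysis at the possibly multiple poles of $r$) and yields strictly more information: an explicit $\alpha$-free value of the bilinear form on the spanning set, namely the kernel $\frac{r(a)-r(b)}{a-b}$ of \eqref{bghjkl}, which in turn gives $X(1,r)$ explicitly. The price is the exceptional configuration where $a$ or $b$ lies in $r^{-1}\left\{\alpha\right\}$, where \eqref{newtotoche4} cannot be evaluated at $z=a$ or $z=b$; your continuity remark does close this gap (on $\Omega(r)$ the simple roots $w_n(\alpha)$ vary continuously and $r'(w_n(\alpha))\neq 0$, so $\alpha\mapsto\sum_n\frac{e_a(w_n)e_b(w_n)}{r'(w_n)}$ is continuous and constant off a finite subset of the open set $\Omega(r)$, hence constant), whereas the paper's residue argument needs no such case distinction.
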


\begin{proof} It is enough to consider $f$ and $g$ respectively equal to
  \[
f(z)=\frac{r(z)-r(a)}{z-a} \quad{\rm and}\quad g(z)=\frac{r(z)-r(b)}{z-b},
\]
where $a,b$ are points of analyticity of $r$.  We note that the poles of $r$ are removable singularities of
\[
  \frac{f(z)g(z)}{(r(z)-\alpha)r(z)}
  =\frac{\left(\dfrac{p(z)q(a)-q(z)p(a)}{z-a}\right)\left(\dfrac{p(z)q(b)-q(z)p(b)}{z-b}\right)}{(p(z)-\alpha q(z))p(z)q(a)q(b)},
\]
and so, for $R>0$ large enough (depending on $\alpha$), such that all the poles of the integrand are inside $|z|\le R$,
\[
\frac{1}{2\pi i}\int_{|z|=R}\frac{f(z)g(z)}{(r(z)-\alpha)r(z)}dz=0
\]
since the difference between the degree of the denominator  and of the numerator is at least $2$.
So, for such $R$:
\begin{equation}
  \label{equality67890}
\frac{1}{2\pi i}\int_{|z|=R}\frac{f(z)g(z)}{r(z)-\alpha}dz=\frac{1}{2\pi i}\int_{|z|=R}\frac{f(z)g(z)}{r(z)}dz.
\end{equation}
    Furthermore
    \begin{equation}
      \label{form789}
  \frac{f(z)g(z)}{r(z)-\alpha}
  =\frac{\left(\dfrac{p(z)q(a)-q(z)p(a)}{z-a}\right)\left(\dfrac{p(z)q(b)-q(z)p(b)}{z-b}\right)}{(p(z)-\A q(z))q(z)q(a)q(b)}
\end{equation}

and in particular, for $\alpha=0$,
    \[
  \frac{f(z)g(z)}{r(z)}
  =\frac{\left(\dfrac{p(z)q(a)-q(z)p(a)}{z-a}\right)\left(\dfrac{p(z)q(b)-q(z)p(b)}{z-b}\right)}{p(z)q(z)q(a)q(b)}.
\]
It follows from these two equalities that to evaluate both sides of \eqref{equality67890} we need to compute the residues of the corresponding functions at the points
$w_n(\alpha),w_n(=w_n(0))$ and at the zeros of $q$.
Since $\alpha\in\Omega(r)$, we have
\[
  {\rm Res\,}\left(  \frac{f(z)g(z)}{r(z)-\alpha},w_n(\alpha)\right)=\frac{f(w_n(\A))g(w_n(\A))}{r^\prime(w_n(\A))},\quad n=1,\ldots, N.
\]
The key to the proof of the lemma is that the residues at the zeros of $q$ do not depend on $\alpha$, as we now show.
Let $z_1,\ldots, z_U$ be the zeros of $q$, and assume $z_u$ be a zero of order $m_u$: $q(z)=(z-z_u)^{m_u}\pi_u(z)$, where $\pi_u(z)$ is a polynomial not vanishing at
  $z_u$.
  We rewrite \eqref{form789} as
  \begin{equation}
    \frac{f(z)g(z)}{r(z)-\alpha}=
    \frac{G(z)}{(p(z)-\alpha q(z))(z-z_u)^{m_u}}
    \end{equation}
    where $G$ is independent of $\alpha$ and is analytic at the point $z_u$ and $G(z_u)\not=0$. From the formula
    \[
      {\rm Res\,}\left(  \frac{f(z)g(z)}{r(z)-\alpha},z_u\right)=\frac{1}{(m_u-1)!}\left(\frac{G(z)}{(p(z)-\alpha q(z))}\right)^{(m_u-1)}(z_u)
    \]
    we see that the residue involves only derivatives of $\alpha q$ of order less or equal to $m_u-1$, evaluated at the point $z_u$, and hence does not depend on $\alpha$
    since
\[
q(z_u)=\cdots=q^{(m_u-1)}(z_u)=0.
\]

Thus, we have for large $R$
    \[
      \frac{1}{2\pi i}\int_{|z|=R}\frac{f(z)g(z)}{r(z)-\alpha}dz=  \sum_{n=1}^N \frac{f(w_n(\A))g(w_n(\A))}{r^\prime(w_n(\A))}+M_1
\]
    where $M_1$ denotes the sum of the residues of the function $\frac{f(z)g(z)}{r(z)-\alpha}$ evaluated at the zeros of $q$, and
    \[
      \frac{1}{2\pi i}\int_{|z|=R}\frac{f(z)g(z)}{r(z)}dz= \frac{1}{2\pi i}\int_{|z|=R}\frac{f(z)g(z)}{r(z)}dz +M_1.
    \]
It follows that the sum
    \[
      \sum_{n=1}^N \frac{f(w_n)g(w_n)}{r^\prime(w_n)}=\frac{1}{2\pi i}\int_{|z|=R}\frac{f(z)g(z)}{r(z)}dz
    \]
    is independent of $\alpha$.\smallskip

  \end{proof}

  \begin{remark}{\rm
When the zeros of $q$ are simple, we have
\[
  M_1=\sum_{u}\frac{\left(\dfrac{p(z_u)}{(z_u-a)(z_u-b)}\right)}{q^\prime(z_u)},
\]
and when $0\in\Omega(r)$
\[
\frac{1}{2\pi i}\int_{|z|=R}\frac{f(z)g(z)}{r(z)}dz=
  \sum_{n=1}^N \frac{f(w_n)g(w_n)}{r^\prime(w_n)}.
  \]
}
\end{remark}

\begin{corollary}
Let $r$ be a {\rm real} rational function of degree $N$,  $\alpha\in\Omega(r)$ and let us assume that the roots $w_1(\A),\ldots, w_N(\A)$  of $r(z)=\alpha$ are simple.
Let $J$ be a $p\times p$ signature matrix with real entries. The sum
\begin{equation}
  \label{Pgram}
X(J,r)=\sum_{n=1}^N\frac{(Z_r(w_n)\otimes I_p)^tJ(Z_r(w_n)\otimes I_p)}{r^\prime(w_n)}\in\mathbb R^{Np\times Np}
\end{equation}
is self-adjoint and real (i.e. symmetric), invertible, and independent of $\alpha$.
\label{amstramgram}
\end{corollary}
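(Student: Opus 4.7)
My plan is to observe that $X(J,r)$ has the Kronecker product structure $X(J,r) = M \otimes J$, where
\[
M = \sum_{n=1}^{N} \frac{Z_r(w_n)^t Z_r(w_n)}{r'(w_n)} \in \mathbb{C}^{N\times N}, \qquad M_{jk} = \sum_{n=1}^{N} \frac{e_j(w_n)e_k(w_n)}{r'(w_n)}.
\]
Indeed, writing $Z_r(w_n)\otimes I_p$ in block form as the $p\times Np$ matrix whose $k$th block equals $e_k(w_n) I_p$, a direct block multiplication yields $(Z_r(w_n)\otimes I_p)^t J (Z_r(w_n)\otimes I_p) = (Z_r(w_n)^t Z_r(w_n)) \otimes J$. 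Summing over $n$ gives $X(J,r) = M\otimes J$, so all three assertions reduce to the corresponding facts about $M$ together with the elementary properties that $J$ is real, satisfies $J^t = J$, and is invertible.

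Independence of $\alpha$ follows at once from Lemma \ref{lemm26} applied to $f=e_j$ and $g=e_k$ (both in $\mathfrak{L}(r)$): each entry $M_{jk}$ is precisely one of the sums proved there to be $\alpha$-independent. Symmetry of $X(J,r)$ is immediate from $M_{jk}=M_{kj}$ together with $J^t=J$. For realness, I would use the $\alpha$-independence just proved to evaluate $M$ at a real $\alpha$. By Lemma \ref{rtyuiop}, since $r$ is real the entries of $Z_r$ may be chosen real, and the roots of $r(z)=\alpha$ are either real or come in complex-conjugate pairs $w,\bar w$ with $r'(\bar w)=\overline{r'(w)}$; the contribution of such a pair to $M_{jk}$ has the form $a+\bar a$, and is therefore real.

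The main obstacle is the invertibility of $M$, which I would establish by recognizing it as the Gram matrix, in the basis $\{e_1,\ldots,e_N\}$, of the symmetric bilinear form
\[
\langle f,g\rangle := \sum_{n=1}^{N} \frac{f(w_n) g(w_n)}{r'(w_n)}, \qquad f,g \in \mathfrak{L}(r),
\]
and by proving its non-degeneracy. Consider the evaluation map
\[
T : \mathfrak{L}(r) \longrightarrow \mathbb{C}^{N}, \qquad f \mapsto (f(w_1),\ldots,f(w_N))^t.
\]
By Lemma \ref{rtyuiop1}, a nonzero element of $\mathfrak{L}(r)$ admits at most $N-1$ zeros, so $T$ is injective; since $\dim\mathfrak{L}(r)=N$, it is a linear bijection. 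The pairing $\langle\cdot,\cdot\rangle$ is then the pullback under $T$ of the bilinear form $(u,v)\mapsto u^t D v$ on $\mathbb{C}^N$, where $D=\mathrm{diag}(1/r'(w_1),\ldots,1/r'(w_N))$ is invertible (since $\alpha\in\Omega(r)$ forces $r'(w_n)\neq 0$). Hence $\langle\cdot,\cdot\rangle$ is non-degenerate, $M$ is invertible, and consequently so is $X(J,r) = M\otimes J$.
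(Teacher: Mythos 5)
Your proof is correct and follows essentially the same route as the paper: the Kronecker reduction $X(J,r)=X(1,r)\otimes J$, $\alpha$-independence via Lemma \ref{lemm26}, realness from the real-or-conjugate-pair structure of the roots (for a real $\alpha$), and invertibility resting on Lemma \ref{rtyuiop1}. Your only deviation is cosmetic: you phrase invertibility as non-degeneracy of the Gram matrix of the bilinear form $\langle f,g\rangle=\sum_n f(w_n)g(w_n)/r'(w_n)$ pulled back through the evaluation map, whereas the paper factors $X(1,r)$ as a sum of rank-one terms and checks linear independence of the vectors $Z_r(w_1),\ldots,Z_r(w_N)$ — the same underlying fact.
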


\begin{proof}
  In the proof we use the fact that, for any (possibly non square) matrices $A$ and $B$ of possibly  different sizes we have (see \cite{HornJohnson2} for a discussion and more information
  on the properties of the tensor, or Kronecker, product):
\[
  \begin{split}
    (A\otimes B)^t&=A^t\otimes B^t\\
    \overline{A\otimes B}&=\overline{A}\otimes \overline{B},
  \end{split}
\]
and that, in the case of invertible matrices,
\begin{equation}
  (A\otimes B)^{-1}=A^{-1}\otimes B^{-1}.
  \label{kronec-inv}
  \end{equation}
  Using \eqref{zzzzz} we deduce
  \[
    \begin{split}
      (Z_r(w_n)\otimes I_p)^tJ(Z_r(w_n)\otimes I_p)&=\begin{pmatrix}e_1(w_n)I_p\\ \vdots \\ e_N(w_n)I_p\end{pmatrix}J\begin{pmatrix}e_1(w_n)I_p& \cdots & e_N(w_n)I_p\end{pmatrix}\\
      &=\left(e_i(w_n)e_j(w_n)J\right)_{i,j=1}^N\\
      &=X(1,r)\otimes J,
\end{split}
\]
and the fact that $X(1,r)$ does not depend on $\alpha$ follows from the preceding lemma.
To prove that $X(J,r)$ is invertible it is enough to show that $X(1,r)$  is invertible. Using \eqref{kronec-inv} we then have
\[
  X(J,r)^{-1}=X(1,r)^{-1}\otimes J
\]
since $J=J^{-1}$. To prove that $X(1,r)$ is invertible we use the following readily checked fact: a $N\times N$ matrix of the form $\sum_{n=1}^N a_n^tb_n$, where the $a_1,\ldots, a_N,b_1,\ldots, b_N\in\mathbb C^N$,
    is invertible if and only if both  $a_1,\ldots, a_N$ and $b_1,\ldots, b_N\in\mathbb C^N$ are linearly independent vectors. In the case of $X(1,r)$, it is therefore enough to check that the
    vectors $Z(w_1),\ldots, Z(w_N)$ are linearly independent. Assume not. There will then be a non-zero vector
    \[
      c=\begin{pmatrix}c_1\\ \vdots \\c_N\end{pmatrix}\in\mathbb C^N
    \]
    such that
    \[
      Z(w_n)c=0,\quad n=1,\ldots, N.
    \]
    Thus the rational function $Z(z)c$ has (at least) $N$ pairwise different zeros. Since $Zc\in\mathfrak L(r)$, this cannot be in view of Lemma \ref{rtyuiop1}, unless $Z(z)c\equiv 0$. But this
    implies $c=0$ since the entries of $Z(z)$ form a basis of $\mathfrak L(r)$.\smallskip

    We now show that $X(J,r)$ is self-adjoint. Since
    \[
      (X(1,r)\otimes J)^t=X(1,r)^t\otimes J^t=X(1,r)\otimes J,
    \]
    it is enough to show that the entries of $X(1,r)$ are real.
    This follows from the fact that either the $w_n$ are real or they
    appear in pairs. In the first case the matrix $\frac{Z(w_n)^tJZ(w_n)}{r^\prime(w_n)}$ has real entries while in the second case the matrix
      \[
        \frac{Z(w_n)^tJZ(w_n)}{r^\prime(w_n)}+\frac{Z(\overline{w_n})^tJZ(\overline{w_n})}{r^\prime(\overline{w_n})}
          \]
          has real entries since $Z$ is real and $r^\prime(\overline{w_n})=\overline{r^\prime(w_n)}$.
\end{proof}

\begin{definition}
$X(J,r)$ is called the associated symmetric matrix to $r$ and $J$.
    We will write $X(J,r)=X(r)$ or $X$ when $J$ or $r$ are clear from the context.
  \end{definition}

Note that $X(1,r)$ can be written in term of a realization as:
\begin{equation}
X(1,r)=\sum_{n=1}^N\frac{(I-w_nT)^{-t}G^tG(I-w_nT)^{-1}}{r^\prime(w_n)}.
\end{equation}

\begin{corollary}
  For a Blaschke product with pairwise different zeros we have
  \begin{equation}
    \label{amstramgram1}
    X(1,r)=\left(\sum_{n=1}^N\frac{1}{r^\prime(a_n)(1-a_n\overline{a_u})
    (1-a_n\overline{a_v})}\right)_{u,v=1}^N.
  \end{equation}
\end{corollary}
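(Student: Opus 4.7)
The plan is to exploit the invariance of $X(1,r)$ under the choice of $\alpha\in\Omega(r)$, established in Corollary \ref{amstramgram}, and pick the most convenient value, namely $\alpha=0$. For a finite Blaschke product $r(z)=\prod_{n=1}^N \frac{z-a_n}{1-z\overline{a_n}}$ with pairwise different zeros, the roots of the equation $r(z)=0$ are precisely $a_1,\dots,a_N$, and they are distinct by hypothesis. Therefore $0\in\Omega(r)$, and in the formula \eqref{Pgram} defining $X(1,r)$ we may take $w_n(0)=a_n$.

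Next, I would invoke the basis of $\mathfrak{L}(r)$ already identified earlier in the excerpt for this class of examples: $e_n(z)=\frac{1}{1-z\overline{a_n}}$. With this choice,
\[
Z_r(a_n)=\begin{pmatrix}\dfrac{1}{1-a_n\overline{a_1}} & \cdots & \dfrac{1}{1-a_n\overline{a_N}}\end{pmatrix},
\]
so the $(u,v)$ entry of the rank-one matrix $Z_r(a_n)^t Z_r(a_n)$ equals
\[
e_u(a_n)\,e_v(a_n)=\frac{1}{(1-a_n\overline{a_u})(1-a_n\overline{a_v})}.
\]

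Plugging this into the expression $X(1,r)=\sum_{n=1}^N \frac{Z_r(w_n)^t Z_r(w_n)}{r'(w_n)}$ with $w_n=a_n$ gives the claimed formula \eqref{amstramgram1} entry by entry. There is no real obstacle: the argument is a direct evaluation once one commits to $\alpha=0$. The only subtlety worth pointing out explicitly is the verification $0\in\Omega(r)$, which is exactly what the assumption of pairwise different Blaschke zeros buys us; without it, the sum defining $X(1,r)$ would have to be computed at a different $\alpha$ and the clean description in terms of the $a_n$'s would be lost.
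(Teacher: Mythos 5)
Your proof is correct and follows essentially the same route as the paper: choose the basis $e_n(z)=\frac{1}{1-z\overline{a_n}}$ (equivalently the realization with $G=\begin{pmatrix}1&\cdots&1\end{pmatrix}$ and $T={\rm diag}(\overline{a_1},\ldots,\overline{a_N})$), use the $\alpha$-independence from Corollary \ref{amstramgram} to take $\alpha=0$, so that $w_n=a_n$, and evaluate the entries directly. Your explicit check that $0\in\Omega(r)$ because the zeros $a_n$ are pairwise distinct is a small point the paper leaves implicit.
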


\begin{proof}
We have $G=\begin{pmatrix}1&1&\cdots& 1\end{pmatrix}$ and $T$ is the diagonal matrix ${\rm diag}\,(\overline{a_1},\overline{a_2},\ldots, \overline{a_N})$.
and so
\[
    (X(1,r))_{uv}=\sum_{n=1}^N\frac{1}{r^\prime(w_n)(1-w_n\overline{a_u})(1-w_n\overline{a_v})}.
  \]
  Taking $\alpha=0$ we get $w_n=a_n$ for $n=1,\ldots, N$, from which \eqref{amstramgram1} follows by using Corollary \ref{amstramgram}.
\end{proof}

In case of a polynomial $r(z)=p(z)$ of degree $N$ and $Z_p(z)= \begin{pmatrix}1&z&\cdots &z^{N-1}\end{pmatrix}$ the above result takes the following special form:

\begin{corollary}
  Let $r(z)=p(z)=p_0+\cdots +p_Nz^{N}$, with pairwise distinct zeros $w_1,\ldots, w_N$. Then, the matrix $X(1,p)=X$ is equal to
  \begin{equation}
    X=\begin{pmatrix}0&\cdots&0&0&0&h_{N-1}\\
      0&\cdots&0&0&h_{N-1}&h_{N}\\
      0&\cdots&0&\iddots&&\vdots \\
      0&\iddots&h_{N-1}&&&\vdots\\
      0&h_{N-1}&\iddots&&&\vdots \\
            h_{N-1}&h_N&\cdots&\cdots&&h_{2N-2}
          \end{pmatrix},
          \label{xxxx}
    \end{equation}
    where
    \begin{equation}
      h_n=\frac{1}{2\pi i}\int_{|z|=R}\frac{z^ndz}{p(z)},
    \end{equation}
    with $R>\max_n |w_n|$.
\end{corollary}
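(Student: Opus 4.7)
The plan is to specialize \eqref{Pgram} to the polynomial case with $J=1$ and the explicit basis $e_i(z)=z^{i-1}$, recognize the resulting power sum as a sum of residues at the zeros of $p$, and read off the anti-triangular Hankel structure from a degree count at infinity.

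First I would substitute $Z_p(w_n)=\begin{pmatrix}1 & w_n & \cdots & w_n^{N-1}\end{pmatrix}$ into the formula of Corollary~\ref{amstramgram}. The rank-one matrix $Z_p(w_n)^t Z_p(w_n)$ has $(i,j)$-entry $w_n^{i+j-2}$, and hence
\[
X(1,p)_{ij}=\sum_{n=1}^N \frac{w_n^{i+j-2}}{p'(w_n)}, \qquad 1\le i,j\le N.
\]
In particular $X(1,p)$ is a Hankel matrix: its $(i,j)$-entry depends only on $i+j$, so it suffices to compute the $2N-1$ scalars attached to the values $i+j=2,3,\ldots,2N$.

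Next I would invoke Lemma~\ref{lemm26} with $\alpha=0$ (which lies in $\Omega(p)$ by the simplicity hypothesis on the zeros of $p$) applied to the basis elements $f(z)=z^{i-1}$ and $g(z)=z^{j-1}$ in $\mathfrak L(p)$. Since each term $w_n^{i+j-2}/p'(w_n)$ is precisely the residue of $z^{i+j-2}/p(z)$ at the simple zero $z=w_n$, the contour integral computation carried out in the proof of that lemma yields
\[
\sum_{n=1}^N \frac{w_n^{i+j-2}}{p'(w_n)}=\frac{1}{2\pi i}\int_{|z|=R}\frac{z^{i+j-2}}{p(z)}\,dz=h_{i+j-2},
\]
for any $R>\max_n|w_n|$; the value is $R$-independent by Cauchy's theorem since $z^k/p(z)$ has no singularities outside the disk $|z|\le R$.

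Finally, the anti-triangular pattern drops out of a degree count at infinity. For $k=i+j-2\le N-2$ (equivalently $i+j\le N$) the integrand satisfies $z^k/p(z)=O(|z|^{-2})$ on $|z|=R$, so letting $R\to\infty$ forces $h_k=0$; this accounts for every entry strictly below the anti-diagonal $i+j=N+1$. For $k=N-1$ the leading asymptotic is $1/(p_N z)$, giving the nonzero value $h_{N-1}=1/p_N$ all along that anti-diagonal, and for $k=N,\ldots,2N-2$ one simply records the scalars $h_k$. Arranging these entries according to the Hankel rule produces exactly the matrix~\eqref{xxxx}. No serious obstacle is foreseen: the whole argument is a direct unpacking of \eqref{Pgram} together with Lemma~\ref{lemm26}, and the only bookkeeping point is matching the Hankel indexing $(i,j)\mapsto i+j-2$ with the anti-diagonal layout of the displayed matrix.
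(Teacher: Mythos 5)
Your proposal is correct and follows essentially the same route as the paper: specialize \eqref{Pgram} to the monomial basis to get the Hankel entries $\sum_{n}w_n^{i+j-2}/p'(w_n)$, identify each such sum with the contour integral $h_{i+j-2}$ via the residue theorem, and kill the entries with $i+j\le N$ by the $O(|z|^{-2})$ decay of $z^k/p(z)$ as $R\to\infty$. The only cosmetic differences are that you cite Lemma \ref{lemm26} where a direct residue computation (as in the paper) suffices, and your phrase ``below the anti-diagonal'' should read ``above'' for the zero block $i+j\le N$; neither affects the argument.
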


\begin{proof}
  We can take $Z_p(z)=\begin{pmatrix}1&z&\cdots&z^{N-1}\end{pmatrix}$, and so
  \[
    \begin{split}
      X&=\sum_{n=1}^N\frac{\begin{pmatrix}1\\ w_n \\  \vdots\\ w_n^{N-1}\end{pmatrix}\begin{pmatrix}1& w_n &  \cdots& w_n^{N-1}\end{pmatrix}}{p^\prime(w_n)}\\
      &\\
      &=\left(\sum_{n=1}^N\frac{w_n^{k+\ell}}{p^\prime(w_n)}
      \right)_{k,\ell=0}^{N-1},
  \end{split}
\]
which is a Hankel matrix. To prove \eqref{xxxx} we now show that $X$ is lower with respect to the main off-diagonal.
Let $R_0$ be such that the roots $w_1,\ldots, w_N$ (assumed pairwise different) of $p(z)$ lie inside $B(0,R_0)$. For $R\ge R_0$ and $t=0,1,2,\ldots,$ we have
\[
  \begin{split}
    \sum_{n=1}^N\frac{w_n^t}{p^\prime(w_n)}&=\frac{1}{2\pi i}\int_{|z|=R}\frac{z^t}{p(z)-\alpha}dz\\
    &=\frac{1}{2\pi}\int_0^{2\pi}\frac{R^{t+1}e^{i(t+1)t}dt}{p(Re^{it})}
    \end{split}
  \]
  which will go to $0$ as $R\rightarrow\infty$ for $t+1<N$, that is $t\le N-2$, and hence the formula for $X$.
\end{proof}

As another example we consider the function $r(z)=z+\frac{1}{z}$ (see Example \ref{zz-1}). Then we have:

\begin{example}
  Let $\alpha\in\mathbb C$ and let $\alpha_+$ and $\alpha_-$ be the roots of
  \[
    z+\frac{1}{z}=\alpha.
  \]
  Then $Z_r(z)=\begin{pmatrix}1&\frac{1}{z}\end{pmatrix}$ and the corresponding matrix $X=\begin{pmatrix}1&0\\0&-1\end{pmatrix}$ and does not depend on $\alpha$.
\end{example}

We give a direct proof of this fact. We note that the roots satisfy $\alpha_-\alpha_+=1$.
Hence
\[
  \begin{split}
    X&=\frac{\begin{pmatrix}1\\ \frac{1}{\alpha_+}\end{pmatrix}\begin{pmatrix}1& \frac{1}{\alpha_+}\end{pmatrix}}{1-\frac{1}{\alpha_+^2}}+
    \frac{\begin{pmatrix}1\\ \frac{1}{\alpha_-}\end{pmatrix}\begin{pmatrix}1&
        \frac{1}{\alpha_-}\end{pmatrix}}{1-\frac{1}{\alpha_-^2}}\\
    &=\frac{\begin{pmatrix}1\\ \alpha_-\end{pmatrix}\begin{pmatrix}1& \alpha_-\end{pmatrix}}{1-\alpha_-^2}+
    \frac{\begin{pmatrix}1\\ \alpha_+\end{pmatrix}\begin{pmatrix}1&
        \alpha_+\end{pmatrix}}{1-\alpha_+^2}\\
    &=\begin{pmatrix}\dfrac{1}{1-\alpha_-^2}+\dfrac{1}{1-\alpha_+^2}&\dfrac{\alpha_-}{1-\alpha_-^2}+\dfrac{\alpha_+}{1-\alpha_+^2}\\
      \dfrac{\alpha_-}{1-\alpha_-^2}+\dfrac{\alpha_+}{1-\alpha_+^2}&      \dfrac{\alpha_-^2}{1-\alpha_-^2}+\dfrac{\alpha_+^2}{1-\alpha_+^2}\end{pmatrix}\\
    &=\begin{pmatrix}1&0\\
     0 &-1\end{pmatrix}
\end{split}
\]
since
\[
  \frac{1}{1-\alpha_-^2}+\frac{1}{1-\alpha_+^2}=  \frac{1}{1-\alpha_-^2}+\frac{\alpha_-^2}{\alpha_-^2-1}=1,
      \]

      \[
        \begin{split}
          \frac{\alpha_-}{1-\alpha_-^2}+\frac{\alpha_+}{1-\alpha_+^2}&=\frac{\alpha_-}{1-\alpha_-^2}+
\frac{\alpha_+\alpha_-^2}{\alpha_-^2-1}\\
          &=\frac{\alpha_-}{1-\alpha_-^2}+\frac{\alpha_-}{\alpha_-^2-1}=0
        \end{split}
      \]
      and
      \[
\frac{\alpha_-^2}{1-\alpha_-^2}+        \frac{\alpha_+^2}{1-\alpha_+^2}
=     \frac{\alpha_-^2}{1-\alpha_-^2}+     \frac{\alpha_+^2\alpha_-^2}{\alpha_-^2-1}=-1,
      \]
      where we have used repeatedly that $\alpha_+\alpha_-=1$.\\

      We also compute directly the associate symmetric matrix for a cubic.

      \begin{example}
        Let $r(z)=z^2+1/z$. Then,
        \[
          X=\begin{pmatrix}0&1&0\\ 1&0&0\\
            0&0&-1\end{pmatrix}.
        \]
      \end{example}

      Indeed, we have $Z_r(z)=\begin{pmatrix}1&z&1/z\end{pmatrix}$, and so
      \[
X=\sum_{z\,:\, r(z)=\alpha}\frac{\begin{pmatrix}1 \\z\\ \frac{1}{z}\end{pmatrix}\begin{pmatrix}1 &z&\frac{1}{z}\end{pmatrix}}{r^\prime(z)}
       \]
     and we need to compute
      \[
\sum_{n=1}^3\frac{w_n^k}{r^\prime(w_n)},\quad k=0,\pm 1,\pm 2,
      \]
      with $\alpha\in\Omega(r)$ and $r(w_n)=\alpha$ where $n=1,2,3$.\smallskip

We have for $k=0$,
\[
  \sum_{n=1}^3\frac{1}{r^\prime(w_n)}=\frac{1}{2\pi i}\int_{|s|=R}\frac{ds}{r(s)-\alpha}=\frac{1}{2\pi i}\int_{|s|=R}\frac{sds}{s^3-\alpha s+1}=0
\]
by the exactity relation. For $k=1$ we have
\[
\sum_{n=1}^3\frac{w_n}{r^\prime(w_n)}=\frac{1}{2\pi i}\int_{|s|=R}\frac{sds}{r(s)-\alpha}=\frac{1}{2\pi i}\int_{|s|=R}\frac{s^2ds}{s^3-\alpha s+1}=1,
\]
as is seen by either computing the residue at $\infty$ or letting $R\rightarrow\infty$.
For $k=2$ we have
\[
  \sum_{n=1}^3\frac{w_n^2}{r^\prime(w_n)}
  =\frac{1}{2\pi i}\int_{|s|=R}\frac{s^2ds}{r(s)-\alpha}=\frac{1}{2\pi i}\int_{|s|=R}\frac{s^3ds}{s^3-\alpha s+1}=-{\rm Res}\, (\frac{s^3}{s^3-\alpha s+1},\infty)=0.
\]
On the other hand, for $k=-1$ we have:
\[
\frac{1}{2\pi i}\int_{|s|=R}\frac{ds}{s(r(s)-\alpha)}=\frac{1}{2\pi i}\int_{|s|=R}\frac{ds}{s^3+1-\alpha s}=0,
\]
here too thanks to the exactity relation, and for $k=-2$ we have:
\[
\begin{split}
  \frac{1}{2\pi i}\int_{|s|=R}\frac{ds}{s^2(r(s)-\alpha)}&=\frac{1}{2\pi i}\int_{|s|=R}\frac{ds}{s(s^3+1-\alpha s)}=0
  \end{split}
\]
and
\[
\sum_{n=1}^3\frac{1}{w_n^2r^\prime(w_n)}+{\rm Res}\, \left(\frac{1}{s(s^3+1-\alpha s)},0\right)=0
\]
and so
\[
  \sum_{n=1}^3\frac{1}{w_n^2r^\prime(w_n)}=-1.
\]
One can also directly compute $X$ for $\alpha=0$. Then the equation $r(z)=\alpha$ becomes $z^3=-1$, and with $j=\frac{1+i\sqrt{3}}{2}$, its roots
  are $w_1=-1,w_2=j$ and $w_3=\overline{j}$. Furthermore $r^\prime(w_n)=2w_n-1/w_n^2=3w_n$. Hence,
  \[
    \sum_{n=1}^3\frac{w_n^k}{r^\prime(w_n)}=\frac{\sum_{n=1}^3w_n^{k-1}}{3}=\begin{cases}\,0,\quad\hspace{2mm} k=-1,0,2\\
      \, 1,\quad\hspace{2mm} k=1, \\
      \, -1,\quad k=-2.
      \end{cases}
   \]

   More generally:

\begin{example}
   The function $r(z)=z^N+\frac{1}{z}$ leads to
   \begin{equation}
     \label{wertyuio}
 X=\begin{pmatrix}& &&0\\
   &\mathcal I_N& &\vdots\\
   & &&0\\
   0& \cdots &0&-1\\
   \end{pmatrix}
  \end{equation}
where $\mathcal I_N$ is the $N\times N$ matrix with antidiagonal elements all equal to $1$, and other elements equal to $0$.
\end{example}

Indeed, we now have $r^\prime(z)=Nz^{N-1}-\frac{1}{z^2}$, and is equal to $-\frac{N+1}{w_n^2}$ when $z=w_n$ is a root of $r(z)=0$, i.e. satisfies
$w_n^{N+1}+1=0$. We then have
\[
  \begin{split}
  X&=\sum_{z\,:\, z^{N+1}+1=0}-z^2\frac{\begin{pmatrix}1\\z\\ \vdots\\z^{N-1}\\ \frac{1}{z}\end{pmatrix}
    \begin{pmatrix}1&z&\cdots&z^{N-1}& \frac{1}{z}\end{pmatrix}}{N+1}\\
  &=\sum_{z\,:\, z^{N+1}+1=0}\frac{-\begin{pmatrix}
z^2&z^3&\cdots&\cdots&z^{N+1}& z\\
z^3&z^4&\cdots&z^{N+1}&z^{N+2}& z^2\\
      & &\iddots && &\\
      & \iddots&\cdots && &\\
      z^{N+1}&z^{N+2}&\cdots&&&\\
      z&z^2&\cdots &&z^N&1
    \end{pmatrix}}{N+1},
  \end{split}
\]
from which \eqref{wertyuio} follows.

\begin{lemma}
  $X(J,r)$ is uniquely defined up to a real similarity matrix $X(J,r)\mapsto SX(J,r)S^t$.
\end{lemma}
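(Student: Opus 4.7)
The plan is to trace back the dependence of $X(J,r)$ on the data used to define it. The only choice made in \eqref{Pgram} is the basis $e_1,\ldots,e_N$ of $\mathfrak{L}(r)$ underlying the row vector $Z_r(z)$; the points $w_1(\alpha),\ldots,w_N(\alpha)$ and the values $r'(w_n)$ are intrinsic to $r$ once $\alpha\in\Omega(r)$ is fixed, and the resulting matrix is independent of $\alpha$ by Corollary~\ref{amstramgram}. So I only need to analyze how \eqref{Pgram} transforms under a change of basis of $\mathfrak{L}(r)$.

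If $\tilde e_1,\ldots,\tilde e_N$ is another basis of $\mathfrak{L}(r)$, there is a unique invertible $M\in\mathbb{C}^{N\times N}$ with $\tilde Z_r(z)=Z_r(z)M$. When $r$ is real and both bases are chosen real (which is possible by Lemma~\ref{rtyuiop}), the matrix $M$ is real. Using the mixed-product property $(AB)\otimes(CD)=(A\otimes C)(B\otimes D)$, I get
\[
\tilde Z_r(w_n)\otimes I_p=(Z_r(w_n)\otimes I_p)(M\otimes I_p).
\]

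Substituting into the definition of the associated symmetric matrix and using $(XY)^{t}=Y^t X^t$, I compute
\[
\tilde X(J,r)=\sum_{n=1}^{N}\frac{(M\otimes I_p)^t (Z_r(w_n)\otimes I_p)^t J (Z_r(w_n)\otimes I_p)(M\otimes I_p)}{r'(w_n)}=(M\otimes I_p)^t X(J,r)(M\otimes I_p).
\]
Setting $S=(M\otimes I_p)^t=M^t\otimes I_p$, which is real and invertible (its inverse is $(M^{-1})^t\otimes I_p$ by \eqref{kronec-inv}), this reads $\tilde X(J,r)=S\, X(J,r)\, S^t$, which is the asserted transformation law. Conversely, given any such real invertible $S$ of the form $M^t\otimes I_p$, reading the computation backwards shows that $SX(J,r)S^t$ arises as the associated symmetric matrix for the basis $\{\tilde e_j\}$ of $\mathfrak{L}(r)$ determined by $M$.

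There is essentially no obstacle: the argument is a direct unwinding of the Kronecker-product identities, and the only point requiring care is the reality of $M$, which is guaranteed because Lemma~\ref{rtyuiop} provides real bases of $\mathfrak{L}(r)$ when $r$ is real.
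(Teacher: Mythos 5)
Your proof is correct and follows essentially the same route as the paper: write the new basis as $\tilde Z_r(z)=Z_r(z)M$ with $M$ real invertible and observe that the defining sum \eqref{Pgram} transforms by congruence, yielding $\tilde X(J,r)=S X(J,r)S^t$ with $S=M^t\otimes I_p$. Your version is in fact slightly more careful than the paper's (which suppresses the $\otimes I_p$ factor), and the added converse remark is harmless but not needed.
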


\begin{proof}
  Let $Z_r^{(1)}$ be constructed as $Z_r$ but with a different basis. We can write
  \[
Z_r^{(1)}(z)=Z_r(z)S
\]
where $S$ is an invertible matrix; $S$ has real entries since we choose a basis of $\mathfrak L(r)$
with real coefficients. The result follows from
\[
  \sum_{n=1}^N\frac{Z_r^{(1)}(w_n)^tJZ_r^{(1)}(w_n)}{r^\prime(w_n)}=  \sum_{n=1}^N\frac{S^tZ_r(w_n)^tJZ_r(w_n)S}{r^\prime(w_n)}.
\]
\end{proof}

\section{A representation theorem for analytic function}
\setcounter{equation}{0}
Corollary \ref{corocoro} leads to the following natural question: {\sl Which functions can be written in the form $Z_r(z)F(r(z))$?} When $r$ is a polynomial the answer was given in
\cite[Theorem 2.1, p.44]{AJLV15} and in \cite{ajlm1} when $r$ is a finite Blaschke product.
We present a result which encompasses both cases. The result is valid in particular for rational
functions $r$ with possibly less than $N$ pairwise different zeros, but having limit infinity at infinity, and also for Blaschke products, corresponding to
results in \cite{AJLV15} and \cite{ajlm1} respectively. These cases are considered after the proof of the lemma. Note that condition \eqref{lacondition} will not hold for functions $r$
such that $\lim_{z\rightarrow\infty}r(z)=0$.

\begin{theorem} Let $r$ be a rational function with non constant numerator, and let $D_1,\ldots, D_L$ be pairwise disjoint open disks around the distinct zeros $w_1,\ldots, w_L$ of $r$, not containing poles of $r$,
  and such that
$\bigcup_{\ell=1}^L\partial D_\ell$ does not contain poles of $r$ either. We let
\begin{equation}
  \label{rho1}
    \rho=\min_{s\in\bigcup_{\ell=1}^L\partial D_\ell}|r(s)|.
\end{equation}
We assume
\begin{equation}
\Omega_0\stackrel{\rm def.}{=}  \left\{z\in\mathbb C\,:\,|r(z)|<\rho\right\}\subset\bigcup_{\ell=1}^L D_\ell.
  \label{lacondition}
  \end{equation}
Then, every function $f$ analytic in a neighborhood of $\bigcup_{\ell=1}^L \overline{D_\ell}$  can be written in  unique way as
\begin{equation}
    \label{le123211}
    f(z)=Z_r(z)F(r(z)),\quad z\in \Omega_0,
  \end{equation}
  where $F$ is $\mathbb C^N$-valued and analytic in $|z|<\rho$.
  \label{234432}
\end{theorem}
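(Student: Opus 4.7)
Uniqueness is immediate from Theorem \ref{unique123321}: if $Z_r(z)F_1(r(z)) \equiv Z_r(z)F_2(r(z))$ on $\Omega_0$, then $Z_r(z)(F_1-F_2)(r(z)) \equiv 0$, hence $F_1 \equiv F_2$ on $\{|\alpha|<\rho\}$.

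For existence, my plan is to construct $F$ directly by a Cauchy contour integral on $\partial D := \bigcup_{\ell=1}^L \partial D_\ell$. The key identity driving the argument is
\[
\frac{r(\zeta)-r(z)}{\zeta-z} \;=\; Z_r(z)\,V(\zeta),
\]
valid whenever $z$ and $\zeta$ are points of analyticity of $r$, for some $\mathbb{C}^N$-valued $V(\zeta)$ that is analytic on the whole domain of analyticity of $r$. Indeed, for fixed such $\zeta$, the left-hand side belongs to $\mathfrak{L}(r)$ by Definition \ref{Statespace}, so it has a unique expansion in the basis $e_1,\ldots,e_N$; taking a realization $r(z)=d+(z-a)G(T_1-zT_2)^{-1}b$ centered at any point $a$ of analyticity of $r$ yields an explicit formula for $V(\zeta)$ (when $a=0$, formula \eqref{bghjkl} gives $V(\zeta)=(I-\zeta T)^{-1}b$).

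By hypothesis $\partial D$ avoids the poles of $r$ and $|r(\zeta)|\geq\rho$ there, so $V$ is analytic on a neighborhood of $\partial D$. For $z\in\Omega_0$, condition \eqref{lacondition} places $z$ in $\bigcup_\ell D_\ell$, and $|r(z)|<\rho\leq|r(\zeta)|$ on $\partial D$; in particular $r(\zeta)-r(z)\neq 0$ on $\partial D$. Cauchy's integral formula applied to $f$ on $\partial D$ then gives
\[
f(z) \;=\; \frac{1}{2\pi i}\int_{\partial D}\frac{f(\zeta)}{\zeta-z}\,d\zeta
\;=\; \frac{1}{2\pi i}\int_{\partial D}\frac{f(\zeta)\,Z_r(z)\,V(\zeta)}{r(\zeta)-r(z)}\,d\zeta
\;=\; Z_r(z)\,F(r(z)),
\]
where
\[
F(\alpha) \;:=\; \frac{1}{2\pi i}\int_{\partial D}\frac{f(\zeta)\,V(\zeta)}{r(\zeta)-\alpha}\,d\zeta.
\]
The integrand is continuous in $\zeta\in\partial D$ and jointly analytic in $\alpha$ for $|\alpha|<\rho$ (uniformly bounded since the denominator is bounded away from $0$), so $F$ is $\mathbb{C}^N$-valued and analytic on the disk $|\alpha|<\rho$, completing the existence part.

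The step I expect to be most delicate is securing the key identity together with the analyticity of $V(\zeta)$ on the whole set where $r$ is analytic, not merely on some subset. The realization-based formula for $V$ makes this transparent and simultaneously handles the possibility that $r$ is not analytic at the origin (one simply picks the center $a$ to be any point of analyticity of $r$). Once this is in place, the remaining manipulations are standard applications of Cauchy's formula and analytic dependence of integrals on a parameter.
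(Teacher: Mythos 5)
Your proposal is correct and follows essentially the same route as the paper: Cauchy's formula on $\bigcup_\ell \partial D_\ell$, the divided-difference identity $\frac{r(s)-r(z)}{s-z}=Z_r(z)(I_N-sT)^{-1}b$ coming from the (minimal) realization, and the definition of $F$ as the contour integral $\frac{1}{2\pi i}\int_{\partial D}\frac{f(s)V(s)}{r(s)-\alpha}\,ds$, with analyticity in $|\alpha|<\rho$ from \eqref{lacondition}. Your appeal to Theorem \ref{unique123321} for uniqueness (rather than the paper's Corollary \ref{corocoro}) is an inessential variation.
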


\begin{proof} We proceed in a number of steps.\\

STEP 1: {\sl Let $z\in\bigcup_{\ell=1}^L D_\ell$. It holds that
\[
  f(z)=\frac{1}{2\pi i}
  \sum_{\ell=1}^L\int_{\partial D_\ell}\frac{f(s)}{s-z}ds.
\]
}

This is Cauchy's formula because of the analyticity assumption on $f$.\\

STEP 2: {\sl The function
  \begin{equation}
    \label{Fz}
F(z)=\frac{1}{2\pi i}\sum_{\ell=1}^L\int_{\partial D_\ell}\frac{(I_N-sT)^{-1}bf(s)}{r(s)-z}ds
\end{equation}
satisfies \eqref{le123211} and is analytic in $|z|<\rho$.}\smallskip

Indeed, using \eqref{bghjkl} and \eqref{Zr6789}, we can write
\[
  \begin{split}
    f(z)&=\frac{1}{2\pi i}\sum_{\ell=1}^L\int_{\partial D_\ell}\frac{f(s)}{r(s)-r(z)}\frac{r(s)-r(z)}{s-z}ds\\
    &=\frac{1}{2\pi i}G(I_N-zT)^{-1}\left(\sum_{\ell=1}^L\int_{\partial D_\ell}\frac{(I_N-sT)^{-1}bf(s)}{r(s)-r(z)}ds\right)\\
    &=G(I_N-zT)^{-1}F(r(z))
      \end{split}
    \]
    with $F(z)$ as in \eqref{Fz}. We now check that $F$ is analytic in $|z|<\rho$.
    This follows from the fact that on $\bigcup_{\ell=1}^L\partial D_\ell$ we have $|r(z)|\ge \rho$ by \eqref{lacondition}, and that $(I_N-sT)^{-1}$ is well defined on
    $\bigcup_{\ell=1}^L\partial D_\ell$ since the realization of $r$ is minimal; see Proposition \ref{minizero}.\\

    STEP 3: {\sl The representation \eqref{le123211} is unique.}\smallskip

    This follows from Corollary \ref{corocoro}.
    \end{proof}

    \begin{remark}{\rm
        $F$ is in fact analytic in the set
        \begin{equation}
          \label{domain123}
A(r)=\left\{z\in\mathbb C\,;\, r(z)\not\in\bigcup_{\ell=1}^Lr(\partial D_\ell)\right\}
          \end{equation}
and in particular at infinity. This fact will be used in particular in the sequel in the proofs of various structure theorems.}
    \end{remark}

    \begin{remark}
      {\rm The space $A(r)$ need not be connected, and
        depends on the choice of the disks, $D_1,\ldots, D_\ell$ and so is not unique. In the various structure theorems in the sequel, we fix each time a set $A(r)$. The function
      $f$ is then extended analytically in the (possibly not-connected) set $r^{-1}(A(r))$.}
\label{key}
\end{remark}

\begin{lemma}
\label{lemma123}
  Condition \eqref{lacondition} holds for rational functions regular at the origin and having a pole at infinity.
          \end{lemma}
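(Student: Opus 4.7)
The plan is to exploit the reciprocal $g(z)=1/r(z)$. Because the zeros of $r$ are exactly $w_1,\ldots,w_L$ and these lie in the interior of $\bigcup_{\ell=1}^L D_\ell$, the function $g$ is meromorphic on $\mathbb{C}$ with poles only at the $w_\ell$, hence analytic on $\mathbb{C}\setminus\bigcup_{\ell=1}^L D_\ell$; the poles of $r$, which are not in $\bigcup_{\ell=1}^L D_\ell$ by hypothesis, are removable (in fact zero) singularities of $g$. The hypothesis that $r$ has a pole at infinity is precisely what forces $g(z)\to 0$ as $|z|\to\infty$. First note that $\rho>0$, since $r$ has no zero on the compact set $\bigcup_{\ell=1}^L\partial D_\ell$ (the zeros $w_\ell$ sit in the interior of the $D_\ell$).

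The conclusion $\Omega_0\subset\bigcup_{\ell=1}^L D_\ell$ is equivalent to $|g(z)|\le 1/\rho$ for every $z\in\mathbb{C}\setminus\bigcup_{\ell=1}^L D_\ell$. To establish this, fix such a $z_0$ and choose $R>|z_0|$ large enough that $\bigcup_{\ell=1}^L\overline{D_\ell}\subset\{|z|<R\}$ and $\max_{|z|=R}|g(z)|\le 1/\rho$; both conditions can be met simultaneously because $g\to 0$ at infinity. I would then apply the maximum modulus principle to $g$ on the bounded domain $U_R=\{|z|<R\}\setminus\bigcup_{\ell=1}^L\overline{D_\ell}$: $g$ is analytic on $U_R$, the boundary $\partial U_R$ is $\{|z|=R\}\cup\bigcup_{\ell=1}^L\partial D_\ell$, and $|g|\le 1/\rho$ on each piece of this boundary by construction. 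Hence $|g(z_0)|\le 1/\rho$, as required.

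The only substantive step is the reduction to a bounded domain on which the maximum modulus principle applies, and this is exactly where the hypothesis of a pole at infinity enters: without it, $|g|$ could fail to be small on arbitrarily large circles, and the conclusion may genuinely break down, which is consistent with the paper's observation that \eqref{lacondition} fails for $r$ with $\lim_{z\to\infty}r(z)=0$. The remaining verifications are routine and should not cause trouble: $U_R$ is connected (an open disk from which a finite disjoint family of closed subdisks has been removed), and $g$ extends analytically across every pole of $r$ inside $U_R$ since each such pole is a zero of $g$. Regularity of $r$ at the origin plays no direct role in the argument; it is simply inherited from the standing hypothesis and is available for the global setup of Theorem \ref{234432}.
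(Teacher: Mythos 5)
Your proof is correct. It rests on the same core device as the paper's proof (the maximum modulus principle applied to the reciprocal of $r$ on a large disk with the $D_\ell$ removed, with the pole at infinity guaranteeing smallness of $1/r$ on a large circle $|z|=R$), but your execution is genuinely leaner in one respect: by passing to $g=1/r$ and observing that the poles of $r$ are removable singularities (zeros) of $g$, you never need to treat the poles of $r$ as exceptional points of the domain. The paper instead works with $1/r$ only on the region obtained by also excising small disks $D_{L+1},\ldots,D_M$ around the poles, which forces an extra preliminary step (shrinking those disks until $|r|\ge 2\rho$ on them, via a local expansion of $r$ near each pole) and a final case analysis assembling the bound $|r|\ge\rho$ from three regions: the pole-disks, the annulus-type compact set, and the exterior $|z|>R$. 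Your version collapses these into a single application of the maximum principle on $U_R=\{|z|<R\}\setminus\bigcup_{\ell=1}^L\overline{D_\ell}$, at the modest price of invoking (rather than re-deriving, as the paper's Step 2 does by explicit estimates) the standard fact that a pole at infinity gives $1/r\to 0$ as $|z|\to\infty$; and your observations that $\rho>0$ and that the conclusion is equivalent to $|g|\le 1/\rho$ off $\bigcup_{\ell=1}^L D_\ell$ (with $|g|=0$ at the poles of $r$) are accurate and suffice.
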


\begin{proof} We consider open disks $D_{L+1},\ldots, D_M$ around the poles of $r$ (if any), all the disks
  $D_1,\ldots, D_M$ are assumed pairwise non intersecting. As above, we set
   \begin{equation}
    \rho=\min_{s\in\bigcup_{\ell=1}^L\partial D_\ell}|r(s)|.
\end{equation}

STEP 1: {\sl We can choose $D_{L+1},\ldots, D_M$ such that
  \begin{equation}
    \label{36}
\rho<\min_{s\in\bigcup_{\ell=L+1}^M\overline{D_\ell}}|r(s)|
\end{equation}
and in particular
\begin{equation}
  \label{37}
\rho<\min_{s\in\bigcup_{\ell=L+1}^M\partial D_\ell}|r(s)|.
\end{equation}
}

Indeed, write $r(z)=\frac{h_{L+1}(z)}{(z-w_{L+1})^{m_{L+1}}}$, where $m_{L+1}\in \mathbb N$, and  $h_{L+1}(z)$ regular at $w_{L+1}$ with $h(w_{L+1})\not=0$. We can write
\[
h_{L+1}(z)=h(w_{L+1})(1+t_{L+1}(z)),\quad{\rm with}\quad\lim_{z\rightarrow w_{L+1}}t_{L+1}(z)=0.
\]
Let $\epsilon_0$ be such that
\[
|z-w_{L+1}|<\epsilon_0\quad\Longrightarrow\quad |t_{L+1}(z)|<1/2.
\]
Then, for $\epsilon\le \epsilon_0$ we have
\[ s\in \partial B(w_{L+1},\epsilon)\quad\Longrightarrow\quad |r(s)|\ge \frac{1-|t_{L+1}(s)|}{|s-w_{L+1}|^{m_{L+1}}}\ge \frac{1}{2\epsilon^{m_{L+1}}}\rightarrow\infty
\]
as $\epsilon\rightarrow 0$, an so $\lim_{\epsilon\rightarrow 0}\min_{|s-w_{L+1}|=\epsilon}|r(s)|=\infty$.\\

\eqref{37} follows by going over the above construction around each of the poles, and taking the value of $\epsilon$ for which (for instance)
\[
\min_{|s-w_{\ell}|=\epsilon}|r(s)|=2\rho,\quad \ell=L+1,\ldots, M.
\]
We then get \eqref{36} by taking $\epsilon\rightarrow 0$.\\

STEP 2: {\sl There exists $R_0>0$ such that all the closed disks $\overline{D_1},\ldots, \overline{D_M}$ are inside the open disk of radius $R$ for $R\ge R_0$  and
  \begin{equation}
    \label{R0R1}
  \min \left\{|r(z)|:|z|=R\right\}
  \ge2\rho,\quad \forall R\ge R_0.
\end{equation}
}
This is clear since $\lim_{z\rightarrow\infty}|r(z)|=\infty$ implies $\lim_{z\rightarrow\infty}\min \left\{|r(z)|:|z|=R\right\}=\infty$. To check the latter, write
$r(z)=\frac{p_1(z)}{p_2(z)}$, where $p_1$ and $p_2$ are polynomials, of degrees respectively $n_1$ and $n_2$, with $n_1>n_2$ since $r$ has a pole at infinity.
We first assume that $p_2$ is not a constant. Then, with $c_1,c_2\in\mathbb C\setminus\left\{0\right\}$,
\[
    r(z)=\frac{c_1z^{n_1}(1+t_1(z))}{c_2z^{n_2}(1+t_2(z))}    =\frac{c_1}{c_2}z^{n_1-n_2}\frac{1+t_1(z)}{1+t_2(z)}.
\]
To conclude take $R_1$ such that for every $R\ge R_1$,
\[
  |z|>R\quad\Longrightarrow\quad |t_1(z)|<1/2\quad{\rm and}\quad |t_2(z)|<1.
\]
For such $R$,
\[
|1+t_1(z)|\ge 1-|t_1(z)|\ge 1/2\quad{\rm  and}\quad |1+t_2(z)|\le 2,
\]
and so
\[
  \left|\frac{1+t_1(z)}{1+t_2(z)}\right|\ge \frac{1-|t_1(z)|}{1+|t_2(z)|}\ge\frac{1}{4}.
\]
Hence, for $|z|\ge R\ge R_1$,
\[
|r(z)|\ge \frac{R^{n_1-n_2}}{4}\rightarrow \infty\quad {\rm as}\quad z\rightarrow\infty.
\]

By taking $R_1$ large enough, we can therefore find $R_0$ such that \eqref{R0R1} holds when $p_2$ is not a constant polynomial. The case where $p_2$ is a constant works
the same, with considering only the term $1+t_1(z)$.\\

STEP 3: {\sl Let $K=\left\{|z|\leq R\right\}\setminus\bigcup_{\ell=1}^M D_\ell$. We have $|r(z)|\ge \rho$ for $z\in K$.}\smallskip

Indeed, the function $1/r(z)$ is analytic in the interior of the compact set $K$ and continuous on its boundary,
and so it attains its maximum. By the maximum modulus principle and by the definition of $\rho$, this maximum is equal to $1/\rho$.
Hence, in the interior of $K$ we have $1/|r(z)|<1/\rho$, that is $|r(z)|>\rho$.\\

STEP 4: {\sl We have $|r(z)|\ge \rho$ outside the $\bigcup_{\ell=1}^L D_\ell$, and \eqref{lacondition} holds.}\smallskip

That $|r(z)|\ge \rho$ outside the $\bigcup_{\ell=1}^L D_\ell$  follows from the definition of $R_0$ for $|z|>R$, and from \eqref{36} for $z$ inside the closed
disks around the poles.\\
\end{proof}

\begin{corollary}
  \label{c123}
In the notation of Theorem \ref{234432}, one can assume the function $F$ analytic in a neighborhood of any preassigned point $w_0$ different from a pole of $r$.
\end{corollary}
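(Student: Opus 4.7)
The plan is to reduce the statement to Theorem \ref{234432} applied to the \emph{translated} rational function $\tilde r(z) := r(z) - w_0$. Since $w_0$ is not a pole of $r$, the function $\tilde r$ is rational with the same poles as $r$ and its numerator has the same degree $N$. The key observation is that
\[
\frac{\tilde r(z) - \tilde r(a)}{z - a} = \frac{r(z) - r(a)}{z - a},
\]
so that $\mathfrak L(\tilde r) = \mathfrak L(r)$. Using the same basis of the state space we then have $Z_{\tilde r}(z) = Z_r(z)$. Moreover, $\tilde r$ has a pole at infinity whenever $r$ does, so the hypothesis \eqref{lacondition} for $\tilde r$ follows from Lemma \ref{lemma123}.

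The distinct zeros of $\tilde r$ are exactly the distinct points of $r^{-1}(\{w_0\})$, none of which is a pole of $r$. We therefore choose pairwise disjoint open disks $\tilde D_1,\ldots,\tilde D_{\tilde L}$ around these zeros, disjoint from the poles of $r$ and such that $\bigcup_\ell \partial \tilde D_\ell$ avoids the poles of $r$, and set
\[
\tilde\rho := \min_{s\in \bigcup_\ell \partial \tilde D_\ell}|\tilde r(s)| = \min_{s\in \bigcup_\ell \partial \tilde D_\ell}|r(s) - w_0|.
\]
Applying Theorem \ref{234432} to $\tilde r$ yields the unique representation
\[
f(z) = Z_{\tilde r}(z)\,\tilde F(\tilde r(z)) = Z_r(z)\,\tilde F(r(z) - w_0),
\]
valid on the corresponding $\Omega_0$-set associated with $\tilde r$, where $\tilde F$ is $\mathbb C^N$-valued and analytic in $|w| < \tilde\rho$.

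Setting $F(w) := \tilde F(w - w_0)$ then gives a $\mathbb C^N$-valued function analytic in a neighborhood of $w_0$ (namely $|w - w_0| < \tilde\rho$) and satisfying $f(z) = Z_r(z) F(r(z))$, which is the desired representation. Uniqueness is guaranteed by Theorem \ref{unique123321}. The only nontrivial point in the argument is the translation-invariance of the state space, which is immediate from the displayed identity; the rest is a direct application of Theorem \ref{234432}, so no genuine obstacle arises.
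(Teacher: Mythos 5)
Your translation trick is an appealing idea, but as written it has two genuine gaps. First, to apply Theorem \ref{234432} to $\tilde r=r-w_0$ you must verify hypothesis \eqref{lacondition} for $\tilde r$, and you do this only by citing Lemma \ref{lemma123}, i.e.\ only when $r$ (hence $\tilde r$) has a pole at infinity. The corollary, however, is stated for any $r$ satisfying the hypotheses of Theorem \ref{234432}, and the case in which the paper actually invokes it is a finite Blaschke product with $w_0=1$ (see Theorems \ref{tm72} and \ref{tm102}); there $r-1$ is not a Blaschke product, Lemma \ref{lemmabla} does not apply to it, and no verification of \eqref{lacondition} for $\tilde r$ is offered. (Note also that when $\deg p=\deg q$ and $w_0=r(\infty)$ the numerator of $\tilde r$ drops degree and can even become constant, so your claim that it keeps degree $N$ — and with it the reduction — breaks down in that corner case.)

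Second, and more fundamentally, applying Theorem \ref{234432} to $\tilde r$ changes both the hypotheses on $f$ and the domain of validity: the disks are now centered at the points of $r^{-1}(w_0)$, so you need $f$ analytic near $r^{-1}(w_0)$, which is not assumed ``in the notation of Theorem \ref{234432}'' (there $f$ is only analytic near the zeros of $r$, and the whole point, cf.\ Remark \ref{key}, is to \emph{extend} $f$ via the representation), and the representation you obtain holds only on $\left\{z\,:\,|r(z)-w_0|<\tilde\rho\right\}$. Even when $f$ happens to be analytic near $r^{-1}(w_0)$, you produce a second representation with its own function $\tilde F$, and you give no argument that $F(w)=\tilde F(w-w_0)$ coincides with, or glues analytically to, the function $F$ of Theorem \ref{234432}, which must stay analytic in $|z|<\rho$, indeed on $A(r)$; this single-$F$ statement is exactly what Definition \ref{def567} (a set $A(r,w_0)\supset A(r)$) and the later structure theorems require ($F$ analytic across the circle at $w_0$). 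The paper's proof avoids both issues at once: keep the same $r$ and the same construction, but choose the original disks $D_\ell$ so that they avoid the finitely many roots of $r(z)=w_0$; then $w_0\notin r\bigl(\bigcup_\ell\overline{D_\ell}\bigr)$, which is closed, and the explicit integral formula \eqref{Fz} for the \emph{same} $F$ is manifestly analytic in a neighborhood of $w_0$. Your citation of Theorem \ref{unique123321} for uniqueness also does not literally apply to a function analytic only near $w_0\neq 0$; the paper's uniqueness goes through Corollary \ref{corocoro}.
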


  \begin{proof}
        Let $s_1,\ldots , s_u$ be the roots (if any) of the equation $r(z)=w_0$. In the proof of
        Theorem \ref{234432} we choose the disks $D_1,\ldots, D_M$ so that $s_1,\ldots, s_u\not\in\cup_{\ell=1}^M\overline{D_\ell}$. Then the various steps in the proof of the
        lemma do not change, neither in statement nor in proof.
To conclude we note that $r(\cup_{\ell=1}^M\overline{D_\ell})$ is closed, and since $w_0\not\in r(\cup_{\ell=1}^M\overline{D_\ell})$, there is a neighborhood of $w_0$ not
intersecting $r(\cup_{\ell=1}^M\overline{D_\ell})$. The analyticity claim follows from the formula for $F$.

\end{proof}

\begin{definition}
  \label{def567}
We will denote by $A(r,w_0)$ a set containing $A(r)$ constructed as in the previous corollary.
\end{definition}
We now consider the case of a finite Blaschke product.

\begin{lemma}
  \label{lemmabla}
  Finite Blaschke products satisfy \eqref{lacondition}.
\end{lemma}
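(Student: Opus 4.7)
The plan is to exploit the well-known three-regime behavior of a finite Blaschke product
$$r(z)=\prod_{n=1}^N\frac{z-a_n}{1-z\overline{a_n}},$$
namely that $|r(z)|<1$ on the open unit disk $\mathbb D$, $|r(z)|=1$ on $\partial\mathbb D$, and $|r(z)|>1$ for $|z|>1$, while the zeros $a_n$ lie in $\mathbb D$ and the poles $1/\overline{a_n}$ lie outside $\overline{\mathbb D}$. I would fix at the outset disks $D_1,\ldots,D_L$ around the distinct zeros $w_1,\ldots,w_L$ of $r$, chosen small enough to lie entirely inside $\mathbb D$ (so in particular they avoid the poles); then $\rho=\min_{s\in\bigcup_\ell\partial D_\ell}|r(s)|$ automatically satisfies $\rho<1$.

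Next I would split the complement $\mathbb C\setminus\bigcup_\ell D_\ell$ into two regions and show $|r(z)|\ge\rho$ on each. For $|z|>1$ the inequality $|r(z)|>1>\rho$ is immediate, so no such $z$ can belong to $\Omega_0$. The main step is the compact ``annular'' region $K=\overline{\mathbb D}\setminus\bigcup_\ell D_\ell$. On $K$ the function $r$ is analytic and nowhere vanishing, since every zero sits inside some $D_\ell$ and every pole lies outside $\overline{\mathbb D}$; hence $1/r$ is analytic on the interior of $K$ and continuous on $K$.

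I would then apply the maximum modulus principle to $1/r$ on $K$: its modulus attains a maximum on $\partial K=\partial\mathbb D\cup\bigcup_\ell\partial D_\ell$. On $\partial\mathbb D$ it equals $1$, and on $\bigcup_\ell\partial D_\ell$ it is bounded by $1/\rho$. Since $\rho<1$ we have $1/\rho>1$, so the maximum equals $1/\rho$, giving $|r(z)|\ge\rho$ everywhere on $K$. Combining the two cases yields $|r(z)|\ge\rho$ for all $z\notin\bigcup_\ell D_\ell$, which is precisely \eqref{lacondition}. There is no real obstacle here: the only point requiring a moment's thought is to recognize that the poles can be ignored because they lie outside $\overline{\mathbb D}$, which is what makes $1/r$ analytic on $K$ and allows the maximum modulus step.
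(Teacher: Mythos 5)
Your proof is correct and follows essentially the same route as the paper: choose the disks inside the open unit disk so that $\rho<1$, apply the maximum modulus principle to $1/r$ on $\overline{\mathbb D}\setminus\bigcup_\ell D_\ell$ using $|r|=1$ on the unit circle and $|r|\ge\rho$ on $\bigcup_\ell\partial D_\ell$, and handle $|z|>1$ via $|r(z)|>1>\rho$. You merely spell out the boundary comparison and the location of the poles a bit more explicitly than the paper does.
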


\begin{proof}
We consider $D_1,\ldots, D_L$ open disks around the zeros of $r$, and assume them inside the open unit disk and non-intersecting,
and define $\rho$ as in\eqref{rho1}. Then $0<\rho<1$ and inside $B(0,1)\setminus\bigcup_{\ell=1}^L D_\ell$ we have $|r(z)|\ge \rho$ by the maximum modulus principle since $|r(z)|=1$ on the
unit circle. The inequality $|r(z)|\ge \rho$ then still holds outside the unit disk since $r$ is a Blaschke product and $|r(z)|>1$ outside the closed unit disk.
So \eqref{lacondition} also holds for finite Blaschke products.
\end{proof}

We will make use of the previous two results for $w_0=1$ in particular in the proofs of Theorem \ref{tm72} and Theorem \ref{tm102}.\\

The case of vector-valued functions is presented in the next corollary.

\begin{corollary}
  Assume $f$ $\mathbb C^p$-valued. Then representation \eqref{le123211} becomes
  \begin{equation}
    f(z)=(Z_r(z)\otimes I_p)F(r(z))
    \label{le123211111}
    \end{equation}
    where now $F$ is $\mathbb C^{pN}$-valued.
    \end{corollary}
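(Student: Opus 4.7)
The plan is to reduce the vector-valued case to the scalar case of Theorem \ref{234432} by working componentwise and then repackaging the resulting data with the Kronecker product.

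First, I would write $f(z) = \begin{pmatrix} f_1(z) & \cdots & f_p(z)\end{pmatrix}^t$ with each $f_j$ scalar and analytic in a neighborhood of $\bigcup_{\ell=1}^L\overline{D_\ell}$. Since the hypotheses on $f$ (analyticity on that neighborhood) are inherited by each $f_j$, Theorem \ref{234432} applies to each component and yields, in unique fashion, a $\mathbb{C}^N$-valued function $F_j$ analytic in $|z|<\rho$ with
\[
f_j(z)=Z_r(z)F_j(r(z)),\qquad z\in\Omega_0,\quad j=1,\ldots,p.
\]

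Next, I would assemble the $F_j$ into a single $\mathbb{C}^{pN}$-valued function. The natural choice is to stack the columns so that, writing $F_j=(F_{j,1},\ldots,F_{j,N})^t$, one sets
\[
F(z)=\bigl(F_{1,1}(z),\ldots,F_{p,1}(z),\; F_{1,2}(z),\ldots,F_{p,2}(z),\;\ldots,\; F_{1,N}(z),\ldots,F_{p,N}(z)\bigr)^t.
\]
With this ordering, the identity
\[
(Z_r(z)\otimes I_p)F(r(z))=\sum_{k=1}^N e_k(z)\bigl(F_{1,k}(r(z)),\ldots,F_{p,k}(r(z))\bigr)^t
\]
holds directly from the definition of the Kronecker product, and its $j$-th entry equals $\sum_{k=1}^N e_k(z)F_{j,k}(r(z))=Z_r(z)F_j(r(z))=f_j(z)$, giving \eqref{le123211111}.

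For uniqueness, if $(Z_r(z)\otimes I_p)F(r(z))\equiv 0$, reading off the $j$-th component gives $Z_r(z)F_j(r(z))\equiv 0$ for each $j$, so Theorem \ref{unique123321} (or equivalently the uniqueness part of Theorem \ref{234432}) forces each $F_j\equiv 0$, hence $F\equiv 0$. No step here is a genuine obstacle; the only care needed is to match the block ordering in the stacking of $F$ with the Kronecker convention $Z_r\otimes I_p$ used throughout the paper, so that the product expands as claimed.
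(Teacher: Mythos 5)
Your proposal is correct and follows essentially the same route as the paper: the paper's proof also works componentwise, defining each block of $F$ by applying the scalar Cauchy-integral construction of Theorem \ref{234432} to each entry $f_\ell$ of $f$ and then repackaging via the Kronecker product. If anything, your explicit attention to the interleaved ordering needed so that the stacked $F$ matches the convention $Z_r\otimes I_p$ is a point the paper glosses over.
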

\begin{proof}
The proof follows the one of Theorem \ref{234432} with a modification in Step 2: we define the function $F=\begin{pmatrix}F_1&\cdots&F_p\end{pmatrix})^t$ by
  \begin{equation}
    \label{Fz1}
F_\ell(z)=\frac{1}{2\pi i}\sum_{\ell=1}^L\int_{\partial D_\ell}\frac{(I_N-sT)^{-1}b}{r(s)-z}\, f_\ell(s)ds,
\end{equation}
where $f=\begin{pmatrix}f_1&\cdots& f_p\end{pmatrix}^t$.\smallskip

In fact, reasoning as above to prove Theorem \ref{234432}, we have
\[
  \begin{split}
    f(z)&=\begin{pmatrix}f_1\\ \vdots \\f_p\end{pmatrix}=\frac{1}{2\pi i}\sum_{\ell=1}^L\int_{\partial D_\ell}\frac{f(s)}{r(s)-r(z)}\frac{r(s)-r(z)}{s-z}ds\\
    &=\frac{1}{2\pi i}
    \begin{pmatrix}G(I_N-zT)^{-1}&0&\cdots&0\\
      0&G(I_N-zT)^{-1}&\cdots&0\\
      \vdots&\cdots&\ddots&\vdots \\
      0&\dots&&G(I_N-zT)^{-1}
      \end{pmatrix}
    \left(\sum_{\ell=1}^L\int_{\partial D_\ell}\frac{(I_N-sT)^{-1}b}{r(s)-r(z)}f(s)ds\right)\\
    &=(G(I_N-zT)^{-1}\otimes I_p)F(r(z))
      \end{split}
    \]
    where $F(z)$ is $\mathbb C^{pN}$-valued, by its definition, and is analytic in $|z|<\rho$.
\end{proof}

These representation results allow to make connections with some non-standard interpolation problem, as we now explain.

\begin{example}
  With $r$ as in Example \ref{zz-1} we have
  \begin{equation}
    \label{zz}
    f(z)=\begin{pmatrix}1&\frac{1}{z}\end{pmatrix}F\left(z+\frac{1}{z}\right).
  \end{equation}
\end{example}

\begin{corollary}
Let $f$ be analytic in a domain symmetric with respect to the unit circle and assume $f(z)=f(1/z)$. Then $f(z)=F(z+1/z)$.
\end{corollary}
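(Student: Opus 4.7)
The plan is to apply the representation \eqref{zz} from the preceding example, which gives, for $f$ analytic on a suitable subdomain and with $F=(F_1,F_2)^t$ a $\mathbb C^2$-valued analytic function,
\[
f(z) = F_1\!\left(z+\tfrac{1}{z}\right) + \frac{1}{z}\,F_2\!\left(z+\tfrac{1}{z}\right),
\]
and then to exploit the fact that $w:=z+1/z$ is invariant under the involution $z\mapsto 1/z$, in order to kill $F_2$.

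First I would secure the representation \eqref{zz} on the given symmetric domain $D$ (which, by symmetry under $z\mapsto 1/z$, avoids $0$ and $\infty$). This requires checking the hypotheses of Theorem \ref{234432} for $r(z)=z+1/z$: concretely, choosing small open disks $D_1,D_2$ around the zeros $\pm i$ of $r$, lying inside $D$ and disjoint from $0$, and verifying condition \eqref{lacondition}. Lemma \ref{lemma123} does not apply verbatim since $r$ has a pole at the origin, but a direct check using the maximum modulus principle on the complement of $D_1\cup D_2$ inside $D$, together with the fact that $|r(z)|\to\infty$ both as $z\to 0$ and as $z\to\infty$, shows that $|r|$ is bounded below by $\rho$ off $D_1\cup D_2$.

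Once \eqref{zz} is in hand, substituting $1/z$ for $z$ and using that $1/z+z = w$ yields
\[
f(1/z) = F_1(w) + z\,F_2(w).
\]
The hypothesis $f(z)=f(1/z)$ then gives
\[
\left(z-\tfrac{1}{z}\right) F_2(w) \equiv 0.
\]
Since $z-1/z$ is not identically zero, analytic continuation forces $F_2(w)\equiv 0$ on a neighborhood of a generic value of $w=z+1/z$, and hence $F_2\equiv 0$ as an analytic function. Setting $F:=F_1$ produces $f(z)=F(z+1/z)$ on the subdomain, and by analytic continuation on all of $D$.

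The main obstacle is the first step, namely justifying that \eqref{zz} applies: the rational function $r(z)=z+1/z$ is not covered by Lemma \ref{lemma123}, so a small ad hoc argument is required to verify \eqref{lacondition}. Once this is done, the symmetry manipulation that eliminates $F_2$ is completely routine.
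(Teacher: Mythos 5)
Your proposal is correct and follows essentially the same route as the paper: write $f(z)=\begin{pmatrix}1&\tfrac{1}{z}\end{pmatrix}F\left(z+\tfrac{1}{z}\right)$ via \eqref{zz}, substitute $z\mapsto 1/z$ (which fixes $z+1/z$), and use $f(z)=f(1/z)$ to force $F_2\equiv 0$. Your additional care in verifying condition \eqref{lacondition} for $r(z)=z+1/z$ (not literally covered by Lemma \ref{lemma123} because of the pole at the origin) is a welcome supplement that the paper leaves implicit, but it is not a different method.
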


\begin{proof}
  We write
  \[
    f(z)=\begin{pmatrix}1&\frac{1}{z}\end{pmatrix}F\left(z+\frac{1}{z}\right)
  \]
  and
  \[
    f(1/z)=\begin{pmatrix}1&z\end{pmatrix}F\left(z+\frac{1}{z}\right)
  \]
  It follows that with $F=\begin{pmatrix}F_1\\ F_2\end{pmatrix}$ we have $F_2\equiv 0$ and hence the result.
\end{proof}

Considering the power series expansion of $f$ centered at the origin, a more direct proof would be to prove first by induction that there exists a sequence of polynomials
$P_1,P_2,\ldots$ such that
\[
z^n+\frac{1}{z^n}=P_n\left(z+\frac{1}{z}\right),\quad n=1,2,\ldots
\]

  Representation \eqref{zz} should allow to solve interpolation problem with the interpolation data
  invariant under the map $z\mapsto 1/z$. As a sample problem consider the following:

  \begin{problem}
    Given $w_1,\ldots, w_N$ be $N$ points in the complex plane, and two sets of complex numbers $\{a_1,\ldots, a_N\}$ and $\{b_1,\ldots, b_N\}$, find the set of functions $f$ analytic in neighborhoods of
    the points $w_i$ and $1/w_i$, $i=1,\ldots, N$ and such that
  \[
    \begin{split}
      f(w_i)&=a_i,\\
      f(1/w_i)&=b_i,\quad i=1,\ldots, N.
    \end{split}
  \]
    \end{problem}

    Representation \eqref{le123211} allows to transform this problem into the following tangential interpolation problem with tangential constraints
      \[
    \begin{split}
\begin{pmatrix}1&w_i\end{pmatrix}      F(W_i)&=a_i,\\
\begin{pmatrix}1&1/w_i\end{pmatrix}      F(W_i)&=b_i\quad i=1,\ldots, N.
\end{split}
\]
where $W_i=w_i+\frac{1}{w_i}$. We will not pursue this direction of research in the present work. The next example is related to multipoint interpolation; see \cite{ajlm1,MR3922295}.
More generally, for given $w_1,\ldots, w_N$ such that $r(w_n)=\alpha$, $n=1,\ldots, N$, the above strategy suggests how to solve multipoint interpolation problems; the case
where $r$ is a Blaschke product was considered in \cite{ajlm1}.

\begin{example} Given $w_1,\ldots, w_N,c_1,\ldots, c_N,\gamma\in\mathbb C$, describe the set of functions analytic in a neighborhood of $w_1,\ldots, w_N$ and such that
  \begin{equation}
    \label{multi}
    \sum_{n=1}^N c_nf(w_n)=\gamma.
    \end{equation}
  \end{example}

  Using the representation \eqref{le123211} we rewrite \eqref{multi} as
    \[
      \begin{split}
        \sum_{n=1}^Nc_nf(w_n)&=\gamma\\
        &\iff\\
        \sum_{n=1}^Nc_nZ_r(w_n)F(0)&=\gamma\\
        &\iff\\
        cF(0)&=\gamma,\quad c=\sum_{n=1}^Nc_nZ_r(w_n),
        \end{split}
      \]
      which is a one-point tangential interpolation problem with solutions $F\in\mathfrak H(k)$ (if one can multiply on the left and stay in $\mathfrak H(k)$) given by
      \begin{equation}
        \label{decomp}
F(z)=k(z,0)k(0,0)^{-1}\frac{c^*}{cc^*}\gamma+\left(I-\frac{c^*c}{cc^*}\right)G(z),\quad G\in \mathfrak H(k)
\end{equation}
so that
\[
f(z)=Z_r(z)\left(k(z,0)k(0,0)^{-1}\frac{c^*}{cc^*}\gamma+\left(I-\frac{c^*c}{cc^*}\right)G(z)\right).
\]
Note that the decomposition \eqref{decomp} is orthogonal. Let $d\in\mathbb C^N$. We have:
\[
  \begin{split}
    \langle \left(I-\frac{c^*c}{cc^*}\right)G(z),k(z,0)k(0,0)^{-1}\frac{c^*}{cc^*}d\gamma   \rangle_{\mathfrak H(k)}    &=\overline{\gamma}    d^*ck(0,0)^{-1}k(0,0)\left(I-\frac{c^*c}{cc^*}\right)G(0)\\
    &=\overline{\gamma}d^*c\left(I-\frac{c^*c}{cc^*}\right)G(0)\\
    &=0
  \end{split}
  \]
  since $c\left(I-\frac{c^*c}{cc^*}\right)=0$.\\

We note that such multipoint constraints are considered in numerical analysis to fix degrees of freedom; see \cite[p. 1494]{poggiogirosi}.\\

\begin{theorem}\label{thm4.8}
Let $\mathfrak H(\mathbb K)$ be a reproducing kernel Hilbert space of  $\mathbb C^p$-valued functions of the form \eqref{le123211111}, with reproducing kernel $\mathbb K$.
Assume that the corresponding space of functions $F$, with norm
\begin{equation}
  \label{defnorm}
  \|F\|=\|f\|
\end{equation}
is a \rkhs $\mathfrak H(K)$ of functions analytic in $B(0,\e)$ for some $\e>0$ of the origin. Then $\mathbb K$ and  $K$ are related by
  \begin{equation}
    \label{ker567}
\mathbb K(z,w)=(Z_r(z)\otimes I_p)K(r(z),r(w))(Z_r(w)\otimes I_p)^*.
\end{equation}
\end{theorem}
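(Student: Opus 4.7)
The strategy is to exhibit the required formula as the pull-back of the reproducing kernel $K$ under a unitary operator constructed from the representation \eqref{le123211111}. Define
\[
U:\mathfrak H(K)\longrightarrow \mathfrak H(\mathbb K),\qquad (UF)(z)=(Z_r(z)\otimes I_p)F(r(z)).
\]
By the standing hypothesis, every element of $\mathfrak H(\mathbb K)$ has the form \eqref{le123211111}, so $U$ is surjective; by Theorem \ref{unique123321} the representation is unique, so $U$ is injective; and by the defining norm identity \eqref{defnorm}, $U$ is isometric. Hence $U$ is unitary. Note that the natural domain of the functions in $\mathfrak H(\mathbb K)$ contains $r^{-1}(B(0,\e))$, which is non-empty since it contains a neighborhood of each zero of $r$, and is where the point evaluations used below make sense.

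Now fix $w$ with $r(w)\in B(0,\e)$ and a vector $c\in\mathbb C^p$. Applying the reproducing property of $\mathbb K$ to $f=UF$ gives
\[
c^*f(w)=\langle f,\mathbb K(\cdot,w)c\rangle_{\mathfrak H(\mathbb K)}.
\]
On the other hand, substituting the representation \eqref{le123211111} and then applying the reproducing property of $K$ to the vector $(Z_r(w)\otimes I_p)^*c\in\mathbb C^{pN}$, I obtain
\[
c^*f(w)=c^*(Z_r(w)\otimes I_p)F(r(w))=\bigl\langle F,\,K(\cdot,r(w))(Z_r(w)\otimes I_p)^*c\bigr\rangle_{\mathfrak H(K)}.
\]

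Next, I would transport the right-hand side of this second identity to $\mathfrak H(\mathbb K)$ using the unitarity of $U$. By the very definition of $U$,
\[
\bigl(U[K(\cdot,r(w))(Z_r(w)\otimes I_p)^*c]\bigr)(z)=(Z_r(z)\otimes I_p)K(r(z),r(w))(Z_r(w)\otimes I_p)^*c,
\]
so that
\[
c^*f(w)=\bigl\langle f,\,(Z_r(\cdot)\otimes I_p)K(r(\cdot),r(w))(Z_r(w)\otimes I_p)^*c\bigr\rangle_{\mathfrak H(\mathbb K)}.
\]
Comparing with the first expression for $c^*f(w)$ and letting $f$ range over $\mathfrak H(\mathbb K)$ and $c$ over $\mathbb C^p$, the uniqueness of the reproducing kernel yields precisely \eqref{ker567}. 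The only mildly delicate point in this plan is ensuring that the chosen $w$ lies in the natural domain of both spaces simultaneously; this is handled by restricting to $w\in r^{-1}(B(0,\e))$, after which the kernel identity extends to the full domain by analyticity.
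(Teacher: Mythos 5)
Your proposal is correct and follows essentially the same route as the paper: the paper's (terser) proof likewise uses the uniqueness of the representation (Corollary \ref{corocoro} with $\alpha=0$ / Theorem \ref{unique123321}) to see that \eqref{defnorm} defines an isometric identification of $\mathfrak H(K)$ with $\mathfrak H(\mathbb K)$, and then reads off \eqref{ker567} as the push-forward of the kernel under this weighted composition map. You have simply spelled out in detail the standard reproducing-kernel computation that the paper leaves implicit.
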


\begin{proof}
  Consider the space of functions of the form $f(z)=(Z_r(z)\otimes I_p)F(r(z))$, where $F\in\mathfrak H(K)$. By Corollary \ref{corocoro} with $\alpha=0$, we have
\[
  f(z)\equiv 0\quad\Longrightarrow  F(z)\equiv 0,\quad  z\in B(0,\e).
\]
It follows from Theorem \ref{unique123321} that the definition \eqref{defnorm} makes sense and the reproducing kernel is then as in \eqref{ker567}.
\end{proof}

We conclude this section with the following remark and a proposition. It may be of interest to look at functions $r$ of the form $r(z)=(r_0(z))^m$, where $r_0$ is itself rational and $m\in\mathbb N$.
The following proposition connects the corresponding state spaces $\mathfrak L(r)$ and $\mathfrak L(r_0)$.

\begin{proposition}
  Let $r$ be a rational function and let $r_0(z)=(r(z))^m$ with $m\in\mathbb N$. Let $\mathfrak M(r_0)$ to be the linear span of the functions $1,\ldots, r_0^{m-1}$.
  \begin{equation}
\mathfrak L(r)=(\mathfrak M(r_0))\otimes \mathfrak L(r_0),
   \end{equation}
   meaning that $t\in\mathfrak L(r)$ if and only if it can be written as
   \begin{equation}
t(z)=\sum_{u=0}^{m-1}t_u(z)r_0^u(z),\quad t_0,\ldots, t_{m-1}\in\mathfrak (r_0).
     \end{equation}
\end{proposition}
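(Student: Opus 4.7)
The plan is to establish the two inclusions $\mathfrak L(r)\subseteq\sum_{u=0}^{m-1}r_0^u\mathfrak L(r_0)$ and conversely, using the algebraic factorization of $a^m-b^m$ for the first direction and a degree count via Lemma \ref{rtyuiop1} for the second. (I read the statement as $r=r_0^m$, consistent with the remark preceding it; the claim $t_u\in\mathfrak L(r_0)$ in the conclusion matches the dimension count $\dim\mathfrak L(r)=mN=m\cdot\dim\mathfrak L(r_0)$.)

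\textbf{Forward inclusion.} The space $\mathfrak L(r)$ is by definition spanned by the functions $\frac{r(z)-r(a)}{z-a}$ as $a$ ranges over points of analyticity of $r$ (equivalently of $r_0$). From the elementary factorization
\[
r_0(z)^m-r_0(a)^m=\bigl(r_0(z)-r_0(a)\bigr)\sum_{u=0}^{m-1}r_0(z)^u\,r_0(a)^{m-1-u},
\]
dividing by $z-a$ yields
\[
\frac{r(z)-r(a)}{z-a}=\sum_{u=0}^{m-1}r_0(a)^{m-1-u}\,r_0(z)^u\cdot\frac{r_0(z)-r_0(a)}{z-a}.
\]
Since $r_0(a)^{m-1-u}\in\mathbb C$ and $\tfrac{r_0(z)-r_0(a)}{z-a}\in\mathfrak L(r_0)$, each generator of $\mathfrak L(r)$ has the required form, and hence so does every element of $\mathfrak L(r)$.

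\textbf{Reverse inclusion.} It suffices to show that every product $r_0^u g$ with $g\in\mathfrak L(r_0)$ and $0\le u\le m-1$ lies in $\mathfrak L(r)$. Write $r_0=p_0/q_0$ in reduced form with $N=\deg p_0\ge\deg q_0=d$; since $\gcd(p_0,q_0)=1$ gives $\gcd(p_0^m,q_0^m)=1$, the reduced form of $r$ is $r=p_0^m/q_0^m$ and $\deg r=mN$. By Lemma \ref{rtyuiop1} applied to $r_0$ and to $r$ respectively,
\[
\mathfrak L(r_0)=\Bigl\{\tfrac{p}{q_0}:\deg p\le N-1\Bigr\},\qquad \mathfrak L(r)=\Bigl\{\tfrac{n}{q_0^m}:\deg n\le mN-1\Bigr\}.
\]
Writing $g=p/q_0$ with $\deg p\le N-1$, one has
\[
r_0(z)^u g(z)=\frac{p_0(z)^u\,p(z)\,q_0(z)^{m-1-u}}{q_0(z)^m},
\]
and the numerator has degree at most $uN+(N-1)+(m-1-u)d\le uN+(N-1)+(m-1-u)N=mN-1$, using $d\le N$ and $m-1-u\ge 0$. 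The characterization of $\mathfrak L(r)$ above then gives $r_0^u g\in\mathfrak L(r)$.

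\textbf{Main obstacle.} The only point demanding care is checking that passing from $r_0$ to $r=r_0^m$ preserves both the coprimality and the degree condition $\deg p_0^m\ge\deg q_0^m$, so that Lemma \ref{rtyuiop1} may legitimately be invoked for both functions within the same $q_0^m$-denominator framework; once this is noted, the proof reduces to the two short arguments above.
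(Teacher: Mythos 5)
Your proof is correct, and your reading of the statement (namely $r=r_0^{\,m}$, so that $\dim\mathfrak L(r)=m\,\dim\mathfrak L(r_0)$) is the intended one, consistent with the remark preceding the proposition. Your forward inclusion is exactly the paper's proof: the paper's entire argument consists of the identity
\[
\frac{r_0^m(z)-r_0^m(a)}{z-a}=\sum_{u=0}^{m-1}r_0(a)^{m-1-u}\,r_0^u(z)\,\frac{r_0(z)-r_0(a)}{z-a},
\]
which you state with the scalar factors $r_0(a)^{m-1-u}$ that the paper's display omits (harmlessly, since they are constants). Where you go beyond the paper is the reverse inclusion: the paper simply says "the result follows from" the identity, which on its face only yields $\mathfrak L(r)\subseteq\sum_{u=0}^{m-1}r_0^u\,\mathfrak L(r_0)$, whereas you close the argument explicitly by the degree count through Lemma \ref{rtyuiop1}, checking that coprimality and the condition $\deg p_0^m\ge\deg q_0^m$ pass from $r_0$ to $r$ so that both spaces sit inside the common denominator $q_0^m$. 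This buys a complete two-sided proof of the stated equivalence; an alternative, equally short way to finish would be a dimension count ($\dim\mathfrak L(r)=mN$ versus the at most $mN$-dimensional span $\sum_u r_0^u\,\mathfrak L(r_0)$), but your route has the advantage of also exhibiting each $r_0^u g$ concretely as an element $n/q_0^m$ with $\deg n\le mN-1$, i.e. it proves the containment $r_0^u\,\mathfrak L(r_0)\subseteq\mathfrak L(r)$ directly rather than inferring it.
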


\begin{proof}
  The result follows from
  \[
\frac{r_0^m(z)-r_0^m(a)}{z-a}=\sum_{u=0}^{m-1}r_0^u(z)\frac{r_0(z)-r_0(a)}{z-a},
\]
where $z,a$ are points of analyticity of $r$.
\end{proof}

We conclude this section with a formula:

\begin{proposition}
  Let $F$ be given by \eqref{Fz}, extended to the domain $A(r)$ (see \eqref{domain123}). Then, for every $\alpha\in A(r)$
  \begin{equation}
R_\alpha F(z)=\frac{1}{2\pi i}\sum_{\ell=1}^L\int_{\partial D_\ell}\frac{(I_N-sT)^{-1}bf(s)}{(r(s)-\alpha)(r(s)-z)}ds .
    \end{equation}
  \end{proposition}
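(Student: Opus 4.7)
The proof plan is essentially a direct computation using the classical resolvent identity inside the integral defining $F$, and the main conceptual point to verify is that the integral formula remains valid on the extended domain $A(r)$.

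First I would note that for $\alpha\in A(r)$, the quantity $r(s)-\alpha$ is nonzero for every $s\in\bigcup_{\ell=1}^L\partial D_\ell$ by the very definition of $A(r)$ in \eqref{domain123}; similarly $(I_N-sT)^{-1}$ exists on the contours by minimality of the realization (see Proposition \ref{minizero} and Step 2 of the proof of Theorem \ref{234432}). Hence for $z,\alpha\in A(r)$ with $z\neq\alpha$, both integrals
\[
F(z)=\frac{1}{2\pi i}\sum_{\ell=1}^L\int_{\partial D_\ell}\frac{(I_N-sT)^{-1}bf(s)}{r(s)-z}\,ds,\qquad F(\alpha)=\frac{1}{2\pi i}\sum_{\ell=1}^L\int_{\partial D_\ell}\frac{(I_N-sT)^{-1}bf(s)}{r(s)-\alpha}\,ds
\]
are well defined, and they provide the analytic extension of $F$ from $|z|<\rho$ to $A(r)$ (as observed in the remark following the proof of Theorem \ref{234432}).

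Next I would simply compute $R_\alpha F(z)=\bigl(F(z)-F(\alpha)\bigr)/(z-\alpha)$ by combining the two integrals and using the elementary identity
\[
\frac{1}{r(s)-z}-\frac{1}{r(s)-\alpha}=\frac{z-\alpha}{(r(s)-z)(r(s)-\alpha)}.
\]
This yields
\[
F(z)-F(\alpha)=\frac{z-\alpha}{2\pi i}\sum_{\ell=1}^L\int_{\partial D_\ell}\frac{(I_N-sT)^{-1}bf(s)}{(r(s)-z)(r(s)-\alpha)}\,ds,
\]
and dividing by $z-\alpha$ gives the claimed formula.

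Finally I would treat the case $z=\alpha$, where by definition $(R_\alpha F)(\alpha)=F'(\alpha)$. Differentiating under the integral sign in \eqref{Fz}, which is justified since $s\mapsto (I_N-sT)^{-1}bf(s)/(r(s)-\zeta)^2$ depends analytically on the parameter $\zeta\in A(r)$ and continuously on $s\in\bigcup_{\ell=1}^L\partial D_\ell$, gives
\[
F'(\alpha)=\frac{1}{2\pi i}\sum_{\ell=1}^L\int_{\partial D_\ell}\frac{(I_N-sT)^{-1}bf(s)}{(r(s)-\alpha)^2}\,ds,
\]
which matches the right-hand side of the claim at $z=\alpha$. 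The only mild obstacle is the bookkeeping about where the various parameters live; once one keeps track of $A(r)$ as the natural domain of analyticity of $F$ coming from the integral representation, the identity is a one-line consequence of the classical resolvent identity applied pointwise under the integral.
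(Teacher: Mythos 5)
Your proposal is correct and follows essentially the same route as the paper, whose entire proof consists of the elementary identity $\frac{1}{r(s)-z}-\frac{1}{r(s)-\alpha}=\frac{z-\alpha}{(r(s)-\alpha)(r(s)-z)}$ applied under the integral sign in \eqref{Fz}. Your additional remarks on well-definedness over $A(r)$ and on the diagonal case $z=\alpha$ are sound but only make explicit what the paper leaves implicit.
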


  \begin{proof}
    This follows from
    \[
      \frac{1}{r(s)-z)}-\frac{1}{r(s)-\alpha}=\frac{z-\alpha}{(r(s)-\alpha)(r(s)-z)}.
      \]
    \end{proof}
\section{Cuntz relations}
\setcounter{equation}{0}
\label{sec2021}
Representations of the Cuntz algebra (see \cite{Cun77}) play an important role in analysis, wavelets and signal processing, in particular in the theory of filter banks; see \cite{BrJo97,MR2254502,MR2277210}.
Connections between the Cuntz relations and reproducing kernel spaces were
considered in \cite{ajlm2,ajlm1}. In the present section we show how the Cuntz relations
are related to the present setting.\smallskip

Let in \eqref{le123211111}
\[
F=\begin{pmatrix}F_1\\ \vdots\\ F_N\end{pmatrix}
\]
where $F_1,\ldots, F_N$ are $\mathbb C^p$-valued. We thus have
\begin{equation}
  \label{decompo}
  f(z)=\sum_{n=1}^N e_n(z)F_n(r(z)).
\end{equation}
\begin{theorem}
  If $K$ in \eqref{ker567} is block diagonal,
  \[
    K={\rm diag}\, (k_1,\ldots, k_N)
  \]
with possibly different positive definite $\mathbb C^{p\times p}$-valued kernels $k_1,\ldots, k_N$, the formulas
\[
  (T_nf)(z)=F_n(z),\quad n=1,\ldots, N,
\]
define bounded operators $T_n\in\mathbf L(\mathfrak H(\mathbb K),\mathfrak H(k_n))$, $n=1,\ldots, N$,
with adjoint operators
\[
(T_n^*g)(z)=e_n(z)g(r(z))
\]
and $T_1^*,\ldots, T_N^*$ satisfy the Cuntz relations.
\label{tm2021}
\end{theorem}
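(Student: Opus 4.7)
The plan is to exploit Theorem \ref{thm4.8}: the map $U: F \mapsto f$ defined by $f(z) = (Z_r(z) \otimes I_p) F(r(z))$ is an isometric isomorphism from $\mathfrak H(K)$ onto $\mathfrak H(\mathbb K)$. When $K = \mathrm{diag}(k_1,\ldots,k_N)$, we have the orthogonal direct sum decomposition $\mathfrak H(K) = \bigoplus_{n=1}^N \mathfrak H(k_n)$, and $T_n$ is essentially the projection onto the $n$-th summand composed with $U^{-1}$. From this point of view, the Cuntz relations for $T_n^*$ fall out formally from the orthogonality of the summands and the fact that $U$ is isometric.

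First I would check well-definedness and boundedness of $T_n$. Well-definedness requires uniqueness of the decomposition $f = \sum_{n=1}^N e_n(\cdot) F_n(r(\cdot))$, which is exactly Corollary \ref{corocoro} applied at some $\alpha \in \Omega(r)$ in the relevant domain. Boundedness is immediate from the norm identity and block diagonality of $K$: $\|T_n f\|^2_{\mathfrak H(k_n)} = \|F_n\|^2 \leq \sum_{m=1}^N \|F_m\|^2_{\mathfrak H(k_m)} = \|F\|^2_{\mathfrak H(K)} = \|f\|^2_{\mathfrak H(\mathbb K)}$.

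The adjoint I would compute by testing on reproducing kernels. From \eqref{ker567} and the block diagonal form of $K$, one has
\[
\mathbb K(z,w) = \sum_{m=1}^N e_m(z) \overline{e_m(w)}\, k_m(r(z), r(w)),
\]
so for fixed $w$ and $c \in \mathbb C^p$ the element $\mathbb K(\cdot,w)c$ decomposes with $F_m(\cdot) = \overline{e_m(w)} k_m(\cdot, r(w)) c$. Hence $T_n(\mathbb K(\cdot,w)c) = \overline{e_n(w)} k_n(\cdot, r(w)) c$. Pairing with $g \in \mathfrak H(k_n)$ via the reproducing property in $\mathfrak H(k_n)$ and comparing with $\langle \mathbb K(\cdot,w)c, T_n^* g\rangle_{\mathfrak H(\mathbb K)}$ via the reproducing property in $\mathfrak H(\mathbb K)$ gives $(T_n^* g)(w) = e_n(w)\, g(r(w))$.

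For the Cuntz relations $T_n T_m^* = \delta_{nm} I_{\mathfrak H(k_m)}$ and $\sum_{n=1}^N T_n^* T_n = I_{\mathfrak H(\mathbb K)}$, the second is immediate: $T_n^* T_n f$ is by construction $e_n(\cdot) F_n(r(\cdot))$, and summing reconstructs $f$ by \eqref{decompo}. The first requires observing that $T_m^* g(z) = e_m(z) g(r(z))$ already displays the decomposition \eqref{decompo} with $F_m = g$ and $F_k = 0$ for $k \neq m$; the main point of subtlety here, and arguably the only non-routine step in the argument, is that one must invoke Corollary \ref{corocoro} once more to know that this ``obvious'' decomposition is the unique one, so that $T_n$ reads off $\delta_{nm} g$.
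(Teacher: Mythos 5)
Your proposal is correct and follows essentially the same route as the paper: well-definedness and boundedness from the uniqueness of the decomposition (Corollary \ref{corocoro} / Theorem \ref{unique123321}) together with the norm identity, the adjoint identified by acting on the kernels $\mathbb K(\cdot,w)c$, and the Cuntz relations $T_nT_m^*=\delta_{nm}I$, $\sum_n T_n^*T_n=I$ deduced from the uniqueness of the decomposition \eqref{decompo}. The only cosmetic difference is that you spell out the adjoint computation via the reproducing property where the paper cites a standard fact about weighted composition operators, which if anything makes the argument more self-contained.
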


\begin{proof} The operators $T_1,\ldots, T_N$ are well defined in view of Theorem \ref{unique123321}. They are bounded by definition of the norms in \eqref{defnorm} since
  \[
\|F\|^2_{\mathfrak H(K)}=\sum_{n=1}^N\|F_N\|^2_{\mathfrak H(k_n)}.
   \]
   To compute the adjoint operators we note that
   \begin{equation}
T_n(\mathbb K(\cdot, w)d) =k_n(\cdot, r(w))\overline{e_n(w)}d_n,\quad{\rm with}\quad d=\begin{pmatrix}d_1\\ \vdots\\ d_N\end{pmatrix}, \quad d_n\in\mathbb C^p.
\end{equation}
Then it follows from a classical result on weighted composition operators that
\[
(T_n^*g)(z)=e_n(z)g(r(z)),\quad n=1,\ldots, N.
\]
Furthermore we have from \eqref{decompo} that
\[
\sum_{n=1}^NT_n^*T_n=I_{\mathfrak H(\mathbb K)}.
\]
Finally for $n,m\in\left\{1,\ldots, N\right\}$, we have
\[
  \begin{split}
    (T_nT_m^*g)(z)&=(T_n(e_mg\circ r))(z)\\
    &=\begin{cases}\, g(z),\,\, n=m\\
      \, 0,\,\,\,\,\hspace{5mm} n\not =m,\end{cases}
  \end{split}
\]
since the function $e_m(z)g(r(z))$ has unique decomposition along the direct sum corresponding to \eqref{decompo}.
\end{proof}

In the case where $k_1=\cdots= k_N=k$ it is of interest to see when it holds that
\[
  k(z,w)=(Z_r(z)\otimes I_p)k(r(z),r(w))(Z_r(w)\otimes I_p)^*,
\]
i.e.
\begin{equation}
  \label{itzik}
k(z,w)=(\sum_{n=1}^N e_n(z)\overline{e_n(w)})k(r(z),r(w)).
\end{equation}
Such equations, and their connections with the Cuntz relations,
have been studied in \cite{ajlm2}.

\section{$R_\alpha^{(r)}$-invariant subspaces}\setcounter{equation}{0}\label{Sec6}
\setcounter{equation}{0}
Recall that $Z_r(z)$ has been defined by \eqref{Zr6789}. We first consider the finite dimensional case.

\begin{theorem}
  A finite dimensional vector space $\mathfrak M$ of $\mathbb C^m$-valued functions is $R^{(r)}_\alpha$-invariant if and only if it is spanned by the columns of a matrix of the form
  \begin{equation}
    \label{345543}
    \M(z)=(Z_r(z)\otimes I_m)C(A-r(z)B)^{-1}
  \end{equation}
  where $C\in\mathbb C^{mN\times M}$ and $A,B\in\mathbb C^{M\times M}$ satisfying $\det(A+\alpha B)\not =0$ for some integer $M$. Furthermore, $M$ can be chosen to be the dimension of $\mathfrak M$, and
  we can always assume that the matrices $A$ and $B$ satisfy $A+\alpha B=I_M$.\smallskip

  For a fixed choice of $Z_r(z)$, the representation \eqref{345543} is unique up to similarity matrix $S$, meaning
  \begin{eqnarray}
    C_1&=&CS\\
    A_1&=&S^{-1}AS\\
    B_1&=&S^{-1}BS.
           \end{eqnarray}
\end{theorem}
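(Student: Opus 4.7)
My plan is to establish invariance, construct a realization from a given invariant subspace, and then check uniqueness. For the sufficiency direction I would fix $c\in\mathbb C^M$, set $F(w)=C(A-wB)^{-1}c$, which is analytic at $\alpha$ by the non-degeneracy of the pencil $A-\alpha B$, and write $f(z)=(Z_r(z)\otimes I_m)F(r(z))$. Then Corollary \ref{corocoro1} gives $(R^{(r)}_\alpha f)(z)=(Z_r(z)\otimes I_m)(R_\alpha F)(r(z))$, and the standard pencil identity
\[
(A-wB)^{-1}-(A-\alpha B)^{-1}=(w-\alpha)(A-wB)^{-1}B(A-\alpha B)^{-1}
\]
identifies $(R_\alpha F)(w)$ with $C(A-wB)^{-1}\tilde c$ for $\tilde c=B(A-\alpha B)^{-1}c$, so $R^{(r)}_\alpha f$ is again in the column span of $\M$.

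For necessity, given $\mathfrak M$ finite-dimensional and $R^{(r)}_\alpha$-invariant of dimension $M$, I would reduce to the classical case. Every $f\in\mathfrak M$ is analytic on $r^{-1}\{\alpha\}$, so Theorem \ref{234432} re-centered at $\alpha$ via Corollary \ref{c123} and Definition \ref{def567} produces a unique representation $f(z)=(Z_r(z)\otimes I_m)F(r(z))$ with $F$ analytic at $\alpha$. Theorem \ref{unique123321} makes $f\mapsto F$ a linear bijection onto a finite-dimensional space $\mathcal F$ of $\mathbb C^{mN}$-valued functions, and Corollary \ref{corocoro1} shows it intertwines $R^{(r)}_\alpha|_{\mathfrak M}$ with $R_\alpha|_{\mathcal F}$. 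Thus $\mathcal F$ is an $M$-dimensional $R_\alpha$-invariant space, to which I apply the classical realization argument: choose a basis and assemble it as a row-matrix $\mathcal F(w)\in\mathbb C^{mN\times M}$, let $\mathsf A$ represent $R_\alpha$ in this basis, and solve the identity $\mathcal F(w)-\mathcal F(\alpha)=(w-\alpha)\mathcal F(w)\mathsf A$ to get $\mathcal F(w)=\mathcal F(\alpha)(I-(w-\alpha)\mathsf A)^{-1}$. Setting $C=\mathcal F(\alpha)$, $A=I+\alpha\mathsf A$, $B=\mathsf A$ and substituting $w=r(z)$ puts the space in the required form with $M$ states and the stated normalization.

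For uniqueness, two normalized realizations correspond via this bijection to two minimal realizations of the same $R_\alpha$-invariant space $\mathcal F$. Standard linear-systems theory (or a direct argument using the observability built into minimality) produces an invertible $S\in\mathbb C^{M\times M}$ with $C_1(A_1-wB_1)^{-1}=C(A-wB)^{-1}S$ identically; evaluating at $w=\alpha$ and comparing Taylor coefficients there gives $C_1=CS$, $A_1=S^{-1}AS$ and $B_1=S^{-1}BS$. The main obstacle I anticipate is justifying the applicability of Theorem \ref{234432} at $\alpha$: condition \eqref{lacondition} is not automatic, and must be ensured via Lemma \ref{lemma123} or Lemma \ref{lemmabla} combined with the re-centering in Corollary \ref{c123}. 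Once this analytic reduction is in place, the rest of the argument is essentially a translation of classical backward-shift invariant subspace theory through the representation \eqref{le123211111}.
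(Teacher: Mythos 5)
Your sufficiency computation (via Corollary \ref{corocoro1} and the pencil identity) and your uniqueness argument are in substance the paper's. The genuine problem is in the necessity direction: you route it through the representation Theorem \ref{234432}, but the hypotheses of that theorem are simply not available here. The invariant-subspace theorem assumes nothing about $r$ beyond the standing hypothesis and $\alpha\in\Omega(r)$, whereas Theorem \ref{234432} needs condition \eqref{lacondition}; you flag this yourself, but the proposed repair via Lemma \ref{lemma123} or Lemma \ref{lemmabla} only covers special classes of $r$ (regular at the origin with a pole at infinity, or finite Blaschke products), not a general $r$ with $\deg p\ge\deg q$. Worse, even when \eqref{lacondition} holds, Theorem \ref{234432} and its re-centering in Corollary \ref{c123} require each $f\in\mathfrak M$ to be analytic in a neighborhood of the closed disks $\overline{D_\ell}$ placed around the zeros (and away from the poles) of $r$ --- Corollary \ref{c123} moves the point where $F$ is guaranteed analytic, not the disks --- while membership in the domain of $R^{(r)}_\alpha$ only presupposes analyticity near $r^{-1}\{\alpha\}$. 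So your reduction to classical backward-shift invariance proves the statement only under extra hypotheses on $r$ and on the domain of the elements of $\mathfrak M$; as written, the step ``Theorem \ref{234432} produces a unique representation for every $f\in\mathfrak M$'' would fail in general.

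The paper avoids all of this with a purely algebraic argument that you should adopt instead: writing $R^{(r)}_\alpha\M=\M T$ for a basis matrix $\M(z)$ and some $T\in\mathbb C^{M\times M}$, multiplying by $r(z)-\alpha$ gives $\M(z)\bigl(I_M-(r(z)-\alpha)T\bigr)=\sum_{n=1}^N\frac{\M(w_n)}{r^\prime(w_n)}\,\frac{r(z)-r(w_n)}{z-w_n}$, and since each function $\frac{r(z)-r(w_n)}{z-w_n}$ lies in $\mathfrak L(r)$, the right-hand side equals $(Z_r(z)\otimes I_m)C$ for a constant matrix $C$; hence $\M(z)=(Z_r(z)\otimes I_m)C\bigl(I_M-(r(z)-\alpha)T\bigr)^{-1}$, which is \eqref{345543} with $A=I_M+\alpha T$, $B=T$, requiring no analyticity beyond a neighborhood of $r^{-1}\{\alpha\}$ and no condition on $r$. (Incidentally, your use of the non-degeneracy of $A-\alpha B$ is what the argument actually needs --- the converse direction evaluates $(A-r(w_n)B)^{-1}=(A-\alpha B)^{-1}$ via \eqref{2.5} --- and the natural normalization coming out of the construction is $A-\alpha B=I_M$.) With the necessity direction replaced by this direct computation, your sufficiency and uniqueness paragraphs stand essentially as the paper's proof.
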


\begin{proof}
Let $M$ denote the dimension of $\mathfrak M$, and let $\M(z)$ be a $m\times M$-valued function with columns made from a basis of $\mathfrak M$. There exists $T\in\mathbb C^{M\times M}$ such that
\[
R^{(r)}_\alpha \M=\M T
\]
that is,
\[
\frac{\M(z)}{r(z)-\alpha}-\sum_{n=1}^N\frac{\M(w_n)}{r^\prime(w_n)(z-w_n)}=G(z)T,
\]
and so
\begin{equation}
  \M(z)(I_M-(r(z)-\alpha)T)=\sum_{n=1}^N\frac{\M(w_n)}{r^\prime(w_n)}\frac{r(z)-\alpha}{z-w_n}.
\end{equation}
The functions
\[
\frac{r(z)-\alpha}{z-w_n}=\frac{r(z)-r(w_n)}{z-w_n}
\]
belong to $\mathfrak L(r)$. So with a preassigned basis $e_1,\ldots, e_N$ and $Z_r$ as in
\eqref{Zr6789} we can write
\begin{equation}
  \M(z)=(Z_r(z)\otimes I_m)C(I_M-(r(z)-\alpha)T)^{-1}
\end{equation}
where $C\in\mathbb C^{mN\times M}$.\smallskip

Conversely, let $\M$ be of the form \eqref{345543}, where $C\in\mathbb C^{mN\times M}$ and $A,B\in\mathbb C^{M\times M}$ satisfying $\det(A+\alpha B)\not =0$. We can write:
\[
  \begin{split}
    (R^{(r)}_\alpha \M)(z)&=\frac{(Z_r(z)\otimes I_m)C(A-r(z)B)^{-1}}{r(z)-\alpha}-\sum_{n=1}^N\frac{(Z_r(w_n)\otimes I_m)C(A-r(w_n)B)^{-1}}{r^\prime(w_n)(z-w_n)}\\
   & \\
&=\frac{(Z_r(z)\otimes I_m)C(A-r(z)B)^{-1}}{r(z)-\alpha}-\left(\sum_{n=1}^N\frac{(Z_r(w_n)\otimes I_m)}{r^\prime(w_n)(z-w_n)}\right)C(A-\alpha B)^{-1}\\
&\\
&=(Z_r(z)\otimes I_m)\frac{C(A-r(z)B)^{-1}-C(A-\alpha B)^{-1}}{r(z)-\alpha}\\
&\\
&=(Z_r(z)\otimes I_m)C(A-r(z)B)^{-1}B(A-\alpha B)^{-1}\\
&\\
&=\M(z)B(A-\alpha B)^{-1},
\end{split}
\]
where we have used \eqref{2.5} to go from the second line to the third one.\smallskip

One can choose $M={\rm dim}\,\mathfrak M$ by taking the columns of $M(z)$ linearly independent. Finally, the last remark follows from replacing $C,A$ and $B$ by $C(A+\alpha B)^{-1}$,
$A(A+\alpha B)^{-1}$ and $B(A+\alpha B)^{-1}$ respectively.\smallskip

Assume that the space has two representations as the span of
\[
  (Z_r(z)\otimes I_m)C(A-r(z)B)^{-1}B(A-\alpha B)^{-1}\quad {\rm and}\quad (Z_r(z)\otimes I_m)C_1(A_1-r(z)B_1)^{-1},
\]
with the properties that $C,C_1\in\mathbb C^{m\times N}$, $A,B,A_1,B_1\in\mathbb C^{N\times N}$, and such that $A+\alpha B=A_1+\alpha B_1=I_M$.
There exists an invertible matrix $S$ such that
\[
(Z_r(z)\otimes I_m)C(A-r(z)B)^{-1}B(A-\alpha B)^{-1}=(Z_r(z)\otimes I_m)C_1(A_1-r(z)B_1)^{-1}S.
\]
Using Corollary \ref{corocoro1} it is sufficient to compare
\[
  C(A-r(z)B)^{-1}B(A-\alpha B)^{-1}=C_1(A_1-r(z)B_1)^{-1}S,
\]
i.e.
\[
  C(I-(e-\alpha)B)^{-1}=C_1(I-(e-\alpha)B_1)^{-1}S
\]
with $e\in\mathbb C$ and  $e-\alpha$ small enough. But this is then  classical result from linear system theory.
\end{proof}

The case where $\mathfrak M$ has dimension $1$ will correspond to eigenfunctions, and we get:
\begin{corollary}
Eigenvectors of the operator $R^{(r)}_\alpha$ are exactly the functions of the form
\begin{equation}
f(z)=\frac{Z_r(z)}{a-r(z)b}
\end{equation}
with corresponding eigenvalue $b$, where $a,b\in\mathbb C$ are such that $a-\alpha b\not=0$.
\end{corollary}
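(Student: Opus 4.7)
The plan is to specialize the preceding theorem to one-dimensional invariant subspaces of scalar functions, i.e.\ to take $m=1$ and $M=1$. Under this specialization the matrices $A$ and $B$ become complex scalars (let me rename them $a$ and $b$), the matrix $C$ becomes a column vector $\xi\in\mathbb C^N$, and the hypothesis $\det(A-\alpha B)\neq 0$ reduces to $a-\alpha b\neq 0$. The theorem then asserts that a one-dimensional $R^{(r)}_\alpha$-invariant subspace is spanned by a function of the form $Z_r(z)\xi/(a-r(z)b)$. As $\xi$ ranges over $\mathbb C^N$, these functions run through all linear combinations of the $N$ entries of the row vector $Z_r(z)/(a-r(z)b)$, so every eigenvector of $R^{(r)}_\alpha$ has this form and, conversely, every such function lies in a one-dimensional invariant subspace.

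To read off the eigenvalue, I would invoke the identity $R^{(r)}_\alpha\mathcal{M}(z)=\mathcal{M}(z)B(A-\alpha B)^{-1}$ established in the proof of the theorem; in the scalar case this becomes scalar multiplication by $b/(a-\alpha b)$. Because the eigenvector $Z_r(z)\xi/(a-r(z)b)$ is invariant under the rescaling $(a,b)\mapsto (a,b)/(a-\alpha b)$, one may normalize $a-\alpha b=1$, exactly as the theorem permits, and with this normalization the eigenvalue is $b$, matching the statement of the corollary.

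As an independent check, the eigenvalue identity can also be verified directly. For $f_j(z)=e_j(z)/(a-r(z)b)$ one computes $f_j(w_n)=e_j(w_n)/(a-\alpha b)$, and the partial-fraction identity \eqref{bastille} applied to $e_j\in\mathfrak L(r)$ gives
\[
\sum_{n=1}^N\frac{f_j(w_n)}{r^\prime(w_n)(z-w_n)}=\frac{1}{a-\alpha b}\cdot\frac{e_j(z)}{r(z)-\alpha}.
\]
Substituting into the defining formula \eqref{newtotoche} for $R^{(r)}_\alpha f_j$ and combining the two fractions produces the factor $b(r(z)-\alpha)/((a-r(z)b)(a-\alpha b))$, which yields $R^{(r)}_\alpha f_j=(b/(a-\alpha b))f_j$. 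No serious obstacle is anticipated; the only subtlety is keeping track of the conventional normalization $a-\alpha b=1$ implicit in the corollary's identification of the eigenvalue with $b$.
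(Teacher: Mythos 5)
Your proposal is correct and follows essentially the same route as the paper, which offers no separate argument: the corollary is exactly the specialization of the preceding theorem to one-dimensional ($m=M=1$) invariant subspaces, with the eigenvalue read off from the identity $(R^{(r)}_\alpha\M)(z)=\M(z)B(A-\alpha B)^{-1}$ established in that theorem's proof. Your additional observation that the eigenvalue is in general $b/(a-\alpha b)$, becoming literally $b$ only after the normalization $a-\alpha b=1$ that the theorem allows, together with the direct verification via \eqref{bastille}, is a careful (and accurate) reading of a point the paper leaves implicit.
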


Furthermore:
\begin{corollary}
A finite dimensional $R^{(r)}_\alpha$-invariant reproducing kernel Pontryagin space has a kernel of the form
\begin{equation}
\label{yevozavut}
K(z,w)=(Z_r(z)\otimes I_m)C(A-r(z)B)^{-1}P^{-1}(A-r(w)B)^{-*}C^*(Z_r(w)\otimes I_m)^*,
\end{equation}
for some Hermitian invertible matrix $P$.
\end{corollary}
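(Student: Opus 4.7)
The plan is to combine the structure theorem for finite-dimensional $R^{(r)}_\alpha$-invariant spaces (proved just above) with the standard Gram matrix description of the inner product on a finite-dimensional non-degenerate inner product space.

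Let $M$ denote the dimension of the space $\mathfrak M$. By the preceding theorem, $\mathfrak M$ is spanned by the columns $f_1,\ldots,f_M$ of a matrix
$\M(z) = (Z_r(z)\otimes I_m)C(A-r(z)B)^{-1}$,
where $C \in \mathbb C^{mN\times M}$, $A,B \in \mathbb C^{M\times M}$ satisfy $\det(A+\alpha B)\neq 0$, and where the representation has been chosen so that $M$ equals the dimension of $\mathfrak M$ and the columns $f_1,\ldots,f_M$ form a basis.

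Define $P \in \mathbb C^{M\times M}$ by $P_{ij} = [f_j, f_i]_{\mathfrak M}$. Hermitian symmetry of the inner product gives $P = P^*$, and a direct computation shows
$[\M d,\M e]_{\mathfrak M} = e^* P d$ for all $d,e \in \mathbb C^M$.
The invertibility of $P$ follows from non-degeneracy of the Pontryagin inner product on $\mathfrak M$: if $Pd = 0$ then $e^* P d = 0$ for every $e$, so $\M d$ is orthogonal (in the indefinite sense) to every element of $\mathfrak M$, hence $\M d \equiv 0$, and then $d = 0$ by linear independence of $f_1,\ldots,f_M$.

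Finally, I would verify that $K(z,w) = \M(z) P^{-1} \M(w)^*$ is the reproducing kernel of $\mathfrak M$. For $c \in \mathbb C^m$, setting $e = P^{-1}\M(w)^* c$, the function $K(\cdot,w)c = \M(\cdot) e$ lies in $\mathfrak M$, and for any $g = \M d \in \mathfrak M$,
$[g, K(\cdot,w)c]_{\mathfrak M} = e^* P d = c^* \M(w) (P^{-1})^* P d = c^* \M(w) d = c^* g(w)$,
using $(P^{-1})^* = P^{-1}$ since $P$ is Hermitian. Substituting the explicit form of $\M(z)$ and $\M(w)^*$ into $\M(z)P^{-1}\M(w)^*$ yields exactly \eqref{yevozavut}. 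The only subtle point in the argument is the invertibility of $P$, and this is immediate from the non-degeneracy of the Pontryagin inner product; otherwise the proof is a direct assembly of the already established representation theorem together with the classical Gram matrix formula for reproducing kernels.
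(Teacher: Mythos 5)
Your argument is correct and is essentially the paper's own proof: the paper simply invokes the standard formula $K(z,w)=\M(z)P^{-1}\M(w)^*$ for the reproducing kernel of a finite-dimensional space spanned by the columns of $\M$, and you have just spelled out that formula's derivation (Gram matrix, its invertibility from non-degeneracy of the Pontryagin inner product, and the verification of the reproducing property) together with the representation $\M(z)=(Z_r(z)\otimes I_m)C(A-r(z)B)^{-1}$ from the preceding theorem. No gaps.
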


\begin{proof}
This is a direct consequence of the formula for the reproducing kernel of a finite dimensional space.
\end{proof}

An important special case of \eqref{yevozavut}, where
\[
  C(A-r(z)B)^{-1}P^{-1}(A-r(w)B)^{-*}C^*={\rm diag}\, (K(z,w),K(z,w),\ldots ,K(z,w)).
\]
In the next results we connect properties of $P$ to the structure of the kernel.

\begin{theorem}
  Assume $(C,A,B)$ minimal. Then the kernel \eqref{yevozavut} is of the form
  \begin{equation}
    (Z_r(z)\otimes I_m)\frac{J-\Theta(r(z))\Theta(r(w))^*}{1-r(z)\overline{r(w)}}(Z_r(w)\otimes I_m)^*
      \label{fd!!!}
   \end{equation}
   if and only if $P$ solves the equation
   \begin{equation}
     \label{P??}
     A^*PA-B^*PB=C^*JC .
   \end{equation}
   \label{poiuy}
 \end{theorem}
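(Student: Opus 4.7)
My plan is to peel off the outer $(Z_r(z)\otimes I_m)$ factors and reduce the statement to a Brodski\u{\i}--Potapov type realization identity whose sole obstruction is the Stein equation \eqref{P??}. Applying Theorem \ref{unique123321} column-by-column to the kernels \eqref{yevozavut} and \eqref{fd!!!} (for each fixed $w$ both sides are, in $z$, functions of the form \eqref{le123211111}), the equality of these kernels is equivalent to the inner identity
\begin{equation*}
C(A - \zeta B)^{-1} P^{-1} (A - \eta B)^{-*} C^* \;=\; \frac{J - \Theta(\zeta)\Theta(\eta)^*}{1 - \zeta\overline{\eta}},
\end{equation*}
where $\zeta = r(z)$, $\eta = r(w)$. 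Since $r$ is rational and nonconstant, its image contains an open set, so both sides may be compared as matrix-valued rational functions of $(\zeta,\overline\eta)$.

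The central algebraic input is the identity
\begin{equation*}
(A - \eta B)^* P (A - \zeta B) \;=\; (1 - \zeta\overline{\eta})\, C^* J C \,+\, (B^* - \zeta A^*)\, P\, (B - \overline{\eta} A),
\end{equation*}
which is itself equivalent to \eqref{P??}: the coefficients of $1$ and of $\zeta\overline\eta$ each yield $A^*PA - B^*PB = C^*JC$, while the coefficients of $\zeta$ and of $\overline\eta$ match automatically. For the direction $(\Leftarrow)$ I would exhibit $\Theta$ by a Brodski\u{\i}--Potapov formula
\begin{equation*}
\Theta(\zeta) \;=\; \Theta_0 - (1 - \zeta\overline{\zeta_0})\, C(A - \zeta B)^{-1}\, P^{-1}\, (A - \zeta_0 B)^{-*}\, C^*\, \Theta_0^{-*}\, J,
\end{equation*}
with $\zeta_0$ a point of modulus one at which $A - \zeta_0 B$ is invertible and $\Theta_0$ a fixed matrix satisfying $\Theta_0 \Theta_0^* = J$; expanding $\Theta(\zeta)\Theta(\eta)^*$ and using the boxed identity to collapse the cross-term $(B^* - \zeta A^*)P(B - \overline\eta A)$ produces precisely the right-hand side of the inner identity.

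For the converse $(\Rightarrow)$, the minimality of $(C, A, B)$ ensures that $\zeta \mapsto C(A - \zeta B)^{-1}$ admits a minimal realization unique up to similarity, so the equality of kernels pins down $P$ (up to that equivalence). Clearing denominators in the inner identity, sandwiching by $(A - \eta B)^*$ on the left and $(A - \zeta B)$ on the right, and comparing the coefficients of $1$ and of $\zeta\overline\eta$ in the resulting polynomial identity in $(\zeta,\overline\eta)$ directly recovers $A^*PA - B^*PB = C^*JC$. The main obstacle is the algebraic verification in the forward direction: a correct choice of base point $\zeta_0$ and of the $J$-normalizing matrix $\Theta_0$ is essential for the Potapov identity to close without spurious constant terms, and the generalized resolvent $(A - \zeta B)^{-1}$ in place of the classical $(I - \zeta T)^{-1}$ makes the bookkeeping more delicate than in the standard disk case, though the overall scheme is entirely classical.
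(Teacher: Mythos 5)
Your route is genuinely different from the paper's, which disposes of the theorem in one line by citing \cite[(5.12)]{ad-jfa} or \cite[Theorem 2.1]{MR1246809} with $a(z)=1$ and $b(z)=r(z)$; a self-contained argument along your lines is reasonable, the reduction to the inner identity via Theorem \ref{unique123321} is legitimate (it is how the paper itself strips the factor $Z_r\otimes I_m$ in other proofs), and your two-variable identity is indeed equivalent to \eqref{P??} (the coefficients of $1$ and of $\zeta\overline{\eta}$ each return the Stein equation, the mixed terms cancel). The forward direction as you wrote it, however, cannot be completed: no matrix $\Theta_0$ with $\Theta_0\Theta_0^*=J$ exists unless $J$ is positive semidefinite, which for a signature matrix forces $J=I$; for indefinite $J$ the left-hand side is positive semidefinite and $J$ is not. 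The trouble originates in the statement itself: \eqref{fd!!!} as printed omits the middle $J$, whereas the results the paper invokes concern the kernel $\frac{J-\Theta(z)J\Theta(w)^*}{1-z\overline{w}}$. Indeed, if $v$ is a vector with $v^*Jv=-1$ and the comparison is made on a region where $|r|<1$, then $v^*\frac{J-\Theta(\zeta)\Theta(\eta)^*}{1-\zeta\overline{\eta}}v=-\frac{1+g(\zeta)g(\eta)^*}{1-\zeta\overline{\eta}}$ with $g=v^*\Theta$, which has infinitely many negative squares and can never match the finite-rank kernel \eqref{yevozavut}; so the ``if'' direction is not provable in the form you adopted. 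With the middle $J$ restored, the Potapov formula closes with the classical normalization $\Theta(\zeta)=I-(1-\zeta\overline{\zeta_0})\,C(A-\zeta B)^{-1}P^{-1}(A-\zeta_0B)^{-*}C^*J$, $|\zeta_0|=1$, $\det(A-\zeta_0B)\neq0$: expanding $J-\Theta(\zeta)J\Theta(\eta)^*$ leaves the middle factor $(A-\zeta_0B)^{-*}C^*JC(A-\zeta_0B)^{-1}$, which is exactly what \eqref{P??} lets you rewrite so that the three terms collapse to $(1-\zeta\overline{\eta})\,C(A-\zeta B)^{-1}P^{-1}(A-\eta B)^{-*}C^*$. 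That computation is the real content of the ``if'' part and has to be carried out (or cited), not just announced.

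The converse also has a gap. You propose to clear denominators and ``sandwich by $(A-\eta B)^*$ on the left and $(A-\zeta B)$ on the right'', but in the inner identity the resolvents are flanked by $C$ on the left and $C^*$ on the right, so this sandwich removes nothing: the resulting expression still contains the unknown $\Theta$ and the outer factors, and there are no coefficients of $1$ and $\zeta\overline{\eta}$ to read off. Minimality must do concrete work here, not merely ``pin down $P$''. Two standard repairs: (i) evaluate the identity at sufficiently many points and use the minimality (observability/controllability) conditions to cancel the outer factors $C(A-\zeta B)^{-1}$ and $(A-\eta B)^{-*}C^*$ before comparing coefficients; or (ii) argue structurally: equality of the kernels identifies the span of the columns of $(Z_r(z)\otimes I_m)C(A-r(z)B)^{-1}$, whose Gram matrix is $P$ precisely because minimality (together with Theorem \ref{unique123321}) makes these columns linearly independent, with a classical $\mathfrak P(\Theta)$-type space in the variable $\zeta=r(z)$; the circle structural identity $[f,g]-[R_0f,R_0g]=g(0)^*Jf(0)$ (the $\alpha=\beta=0$, $r(z)=z$ case of the identity in Theorem \ref{tm72}), applied to $f=C(A-\zeta B)^{-1}c$ and $g=C(A-\zeta B)^{-1}d$ so that $R_0f=C(A-\zeta B)^{-1}BA^{-1}c$ (after normalizing the base point so that $A$ is invertible), reads $d^*\left(P-A^{-*}B^*PBA^{-1}-A^{-*}C^*JCA^{-1}\right)c=0$ for all $c,d$, which is \eqref{P??}.
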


 \begin{proof}
   This follows from \cite[(5.12)]{ad-jfa} or \cite[Theorem 2.1, p. 37]{MR1246809} with $a(z)=1$ and $b(z)=r(z)$ in the notation of these papers.
\end{proof}

We refer to \cite[Lemma 2.2, p. 415 and (2.4) p. 416]{a-seville} for the following result; a proof is given for completeness.

\begin{theorem}
  \label{thm6-5}
  Let $\mathfrak M$ be a space of finite dimension $d$ of $\mathbb C^p$-valued functions analytic in some open set $\Omega$ and assume that there is a point
  $a_0\in\Omega$ such that the linear span of the
  vectors $f(a_0)$ with $f$ varying in $\mathfrak M$ is equal to $\mathbb C^p$. Then,
$\mathfrak M$ satisfies for every $a\in\Omega$ (at the possible exception of $a$ in a zero set) the condition
\begin{equation}
  \label{invdb}
f\in\mathfrak M,\,\,and\,\,\,    f(a)=0\quad\Longrightarrow \frac{f(z)}{z-a}\in\mathfrak M,
  \end{equation}
if and only if it is spanned by the columns of a matrix of the form
\begin{equation}
  \label{EABC}
    E(z)C(A-zB)^{-1}.
  \end{equation}
  where $E$ is $\mathbb C^{p\times p}$-valued analytic in $\Omega$ and such that $\det E(z)\not\equiv 0$, and where $A,B$ and $C$ are matrices of appropriate size and such that
  $\det (A-zB)\not\equiv 0$. Furthermore, $A$ and $B$ can be chosen such that $A-a_0B=I_d$.
\end{theorem}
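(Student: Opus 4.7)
The plan is to prove both implications, with the bulk of the work in the \emph{only if} direction, which I reduce to the first theorem of Section~\ref{Sec6} (specialized to $r(z)=z$, so that $N=1$ and $Z_r(z)=1$) by factoring out a suitable matrix $E(z)$. For the \emph{if} direction, take a generic $f(z)=E(z)C(A-zB)^{-1}\xi\in\mathfrak M$. At any point $a$ where both $E(a)$ and $A-aB$ are invertible (only a zero set is excluded), the vanishing $f(a)=0$ forces $C(A-aB)^{-1}\xi=0$, and the standard resolvent identity $(A-zB)^{-1}-(A-aB)^{-1}=(z-a)(A-zB)^{-1}B(A-aB)^{-1}$ immediately gives $f(z)/(z-a)=E(z)C(A-zB)^{-1}\,B(A-aB)^{-1}\xi\in\mathfrak M$.

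For the \emph{only if} direction, I first choose a center $a_\ast$ (arbitrarily close to the given $a_0$) at which both the evaluation map on $\mathfrak M$ is onto (an open condition, holding in a neighborhood of $a_0$) and the invariance hypothesis is in force (outside the exceptional zero set). I pick $g_1,\dots,g_p\in\mathfrak M$ with $g_j(a_\ast)=e_j$ and set $E(z)=\begin{pmatrix}g_1(z)&\cdots&g_p(z)\end{pmatrix}$, so that $E(a_\ast)=I_p$ and in particular $\det E\not\equiv 0$. Defining $\mathfrak N=E^{-1}\mathfrak M$ yields a $d$-dimensional space of $\mathbb C^p$-valued functions analytic near $a_\ast$; by construction it contains each $e_j=E^{-1}g_j$, hence all constant $\mathbb C^p$-valued functions.

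The crucial verification is that $\mathfrak N$ is genuinely $R_{a_\ast}$-invariant. Given $h\in\mathfrak N$, the function $\tilde h=h-h(a_\ast)$ lies in $\mathfrak N$ (constants are in $\mathfrak N$) and vanishes at $a_\ast$; hence $E\tilde h\in\mathfrak M$ vanishes at $a_\ast$, the hypothesis places $(E\tilde h)/(z-a_\ast)$ back in $\mathfrak M$, and dividing by $E$ exhibits $(h-h(a_\ast))/(z-a_\ast)\in\mathfrak N$. The first theorem of Section~\ref{Sec6} then applies to $\mathfrak N$ and produces $A,B\in\mathbb C^{d\times d}$ and $C\in\mathbb C^{p\times d}$ with $A-a_\ast B=I_d$ such that $\mathfrak N$ is spanned by the columns of $C(A-zB)^{-1}$. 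Thus $\mathfrak M=E\mathfrak N$ is spanned by the columns of $E(z)C(A-zB)^{-1}$, and a standard reparametrization of the pencil (possible whenever $a_\ast$ is chosen so that $\det(A-a_0B)\neq 0$, a generic condition on $a_\ast$) restores the required normalization $A-a_0B=I_d$. The main obstacle is recognizing that the hypothesis on $\mathfrak M$, which is strictly weaker than full $R_a$-invariance, upgrades to genuine $R_{a_\ast}$-invariance on $\mathfrak N$; this upgrade is powered precisely by $\mathfrak N$'s containment of the constants, which converts the allowed operation of dividing out a vanishing function into the full backward-shift action.
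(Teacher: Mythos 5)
Your two directions are both essentially sound, and your \emph{if} part is the same resolvent-identity computation as the paper's STEP 3. For the \emph{only if} part you take a genuinely different route: the paper centers $E$ at $a_0$ itself (columns $f_1,\dots,f_p\in\mathfrak M$ with $f_1(a_0),\dots,f_p(a_0)$ a basis of $\mathbb C^p$) and then does the finite-dimensionality argument in one stroke, observing that for a basis matrix $F$ of $\mathfrak M$ the function $F(z)c-E(z)E(a_0)^{-1}F(a_0)c$ vanishes at $a_0$, so \eqref{invdb} plus $\dim\mathfrak M=d$ gives $\frac{F(z)-E(z)E(a_0)^{-1}F(a_0)}{z-a_0}=F(z)G$ and hence $F(z)=E(z)E(a_0)^{-1}F(a_0)\,(I_d-(z-a_0)G)^{-1}$, which is \eqref{EABC} with the normalization $A-a_0B=I_d$ built in. You instead normalize $E$ at a nearby admissible point $a_\ast$, pass to $\mathfrak N=E^{-1}\mathfrak M$, use the presence of the constants in $\mathfrak N$ to upgrade the division property to genuine $R_{a_\ast}$-invariance, and then quote the first structure theorem of Section \ref{Sec6} with $r(z)=z$, $N=1$, $Z_r\equiv 1$. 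That modularization is legitimate (the Section \ref{Sec6} theorem is proved independently, so there is no circularity), and your insistence on working at a point $a_\ast$ outside the exceptional zero set is in fact more careful than the paper's use of the hypothesis at $a_0$ itself.

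The gap is in the final normalization $A-a_0B=I_d$. You justify it by ``choosing $a_\ast$ so that $\det(A-a_0B)\neq 0$, a generic condition on $a_\ast$'', but $A$ and $B$ are only produced after $a_\ast$ and $E$ have been fixed, so this is not a condition you can impose on $a_\ast$, and nothing in your construction excludes $\det(A-a_0B)=0$: with $E$ normalized by $E(a_\ast)=I_p$ you have no control over $\det E(a_0)$, so the elements of $\mathfrak N=E^{-1}\mathfrak M$ need not even be analytic at $a_0$ and the pencil $A-zB=I_d-(z-a_\ast)B$ may be singular there. The repair is short: choose the columns of $E$ as in the paper, with $f_j(a_0)$ a basis of $\mathbb C^p$, so that $E$ is invertible in a neighborhood of $a_0$, and take $a_\ast$ in that neighborhood and outside the exceptional set. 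Then every element of $\mathfrak N$, in particular every column of the minimal representation $C(A-zB)^{-1}$, is analytic at $a_0$; if $v\neq 0$ satisfied $Bv=(a_0-a_\ast)^{-1}v$ one would get $C(A-zB)^{-1}v=\frac{a_0-a_\ast}{a_0-z}\,Cv$, which is analytic at $a_0$ only if $Cv=0$, and then this combination of the linearly independent columns vanishes identically, a contradiction; hence $\det(A-a_0B)\neq 0$. Replacing $(A,B)$ by $\bigl((A-a_0B)^{-1}A,\,(A-a_0B)^{-1}B\bigr)$ multiplies $E(z)C(A-zB)^{-1}$ on the right by the constant invertible matrix $A-a_0B$, leaves the column span unchanged, and achieves $A-a_0B=I_d$.
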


\begin{proof} We proceed in a number of steps.\\

  STEP 1: {\sl Construction of the function $E(z)$.}\smallskip

  The space $\mathbb C^p$ is spanned by the vectors $f(a_0)$, and so contains a basis made of such vectors, say $f_1(a_0),\ldots, f_p(a_0)$, where $f_1,\ldots, f_p\in\mathfrak M$. The function
  $E(z)$ with columns $f_1(z),\ldots, f_p(z)$ has a non-identically vanishing determinant since $E(a_0)$ is invertible by construction.\\

  STEP 2: {\sl Construction of a basis for $\mathfrak M$.}\smallskip

  Let $E(z)$ be a $\mathbb C^{p\times p}$-valued function $E(z)$ as in the previous step, and let $d$ denote the dimension of $\mathfrak M$,
  Let $F(z)$ be a  $\mathbb C^{p\times d}$-valued function, whose columns form a basis of $\mathfrak M$. For every $c\in\mathbb C^p$ the function $z\mapsto F(z)c-E(z)E^{-1}(a_0)F(a_0)c$ belongs
    to $\mathfrak M$ and vanishes at $a_0$. By hypothesis \eqref{invdb}, the function
    \[
   z\mapsto \frac{F(z)c-E(z)E^{-1}(a_0)F(a_0)c}{z-a_0}
 \]
 also belongs to $\mathfrak M$. It follows that there exists $G\in\mathbb C^{d\times d}$ such that
    \[
\frac{F(z)-E(z)E(a_0)^{-1}F(a_0)}{z-a_0}=F(z)G,
\]
and hence
\[
F(z)=E(z)E(a_0)^{-1}F(a_0)(I_N-(z-a_0)G)^{-1},
\]
which is of the form \eqref{EABC} with $C=E(a_0)^{-1}F(a_0)$, $A=I_d+a_0G$ and $B=a_0G$.\\

STEP 3: {\sl Converse statement:}\smallskip

Conversely,  let $F(z)$ be of the form \eqref{EABC} and let $a$ be such that $\det E(a)\not=0$. We have for $c\in\mathbb C^p$ that $F(a)c=0$ if and only if $C(A-aB)^{-1}c=0$. We can write
\[
\begin{split}
  \frac{F(z)c}{z-a}  &=\frac{E(z)C\left((A-zB)^{-1}-(A-aB)^{-1}\right)c}{z-a}\\
  &=E(z)C\left((A-zB)^{-1}\left\{\frac{A-aB-A+zB}{z-a}\right\}(A-aB)^{-1}\right)c\\
&=E(z)C(A-zB)^{-1}B(A-aB)^{-1}c
\end{split}
\]
and so the function $z\mapsto \frac{F(z)c}{z-a}\in\mathfrak M$.
\end{proof}

We can now prove the following result, which characterizes the counterpart of classical finite dimensional de Branges spaces in our setting.

\begin{theorem} Let $r$ be a {\rm real} rational function satisfying the hypothesis of Theorem \ref{234432}. Under the notation of the previous theorem,
let $\mathfrak M$ be a space of finite dimension $d$ of $\mathbb C^p$-valued functions analytic in some open set $\Omega$ and assume that there is a point
  $a_0\in\Omega$ such that the linear span of the
  vectors $f(a_0)$ with $f$ varying in $\mathfrak M$. Assume furthermore that
$\mathfrak M$ has the following property: Let $\alpha\in\mathbb C$ for which the equation $r(z)=\alpha$
  has $N$ distinct solutions $w_1,\ldots, w_N$, and such that
  \begin{equation}
    \label{ath}
    f(w_n)=0,\,\,\,\,n=1,\ldots, N\,\,\Longrightarrow\,\, z\mapsto \frac{f(z)}{r(z)-\alpha}\,\in\mathfrak M.
  \end{equation}
  Then $\mathfrak M$
  has reproducing kernel of the form
    \[
      (Z_r(z)\otimes I_p)  \frac{E_+(r(z))JE_+(r(w))^*-E_-(r(z))JE_-(r(w))^*}{-i(r(z)-\overline{r(w)})}(Z_r(w)\otimes I_p)^*,
      \]
    if and only if $P$ satisfies \eqref{P??}.
 \end{theorem}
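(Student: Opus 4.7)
The strategy is to reduce everything to a classical finite-dimensional de Branges space (with $r(z)=z$) via the representation theorem, and then to pull back the conclusion through $Z_r$.

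First, I would invoke Theorem \ref{234432} (together with Corollary \ref{c123}, choosing the disks so that the representation is valid near $a_0$ and near each root $w_n$) to write every element $f\in\mathfrak M$ uniquely as
\[
f(z)=(Z_r(z)\otimes I_p)F(r(z)),
\]
where $F$ is $\mathbb C^{Np}$-valued and analytic in a suitable open set. Let $\mathfrak M_0$ denote the finite-dimensional space of such associated functions $F$; uniqueness is guaranteed by Theorem \ref{unique123321}, and the map $f\leftrightarrow F$ is an isomorphism. Transporting the inner product yields a finite-dimensional Pontryagin (or Hilbert) space of $\mathbb C^{Np}$-valued functions analytic in a neighborhood of $\alpha$, and of $r(a_0)$.

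Next, I would translate the hypothesis \eqref{ath} on $\mathfrak M$ into the classical de Branges condition on $\mathfrak M_0$. The key ingredients are Corollary \ref{corocoro1}, which gives
\[
(R^{(r)}_\alpha f)(z)=(Z_r(z)\otimes I_p)(R_\alpha F)(r(z)),
\]
and Corollary \ref{corocoro}, which implies (since $\mathsf G\otimes I_p$ is invertible) that
\[
f(w_n)=0\text{ for all }n=1,\dots,N\quad\Longleftrightarrow\quad F(\alpha)=0.
\]
Under either of these equivalent conditions, the second sum in \eqref{newtotoche} vanishes and $(R_\alpha^{(r)}f)(z)=f(z)/(r(z)-\alpha)$; likewise $(R_\alpha F)(z)=F(z)/(z-\alpha)$. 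Hence \eqref{ath} on $\mathfrak M$ is equivalent to the classical condition \eqref{invdb} on $\mathfrak M_0$ at the point $\alpha$, namely
\[
F\in\mathfrak M_0,\ F(\alpha)=0\ \Longrightarrow\ z\mapsto\frac{F(z)}{z-\alpha}\in\mathfrak M_0.
\]
The non-degeneracy hypothesis at $a_0$ lifts to the corresponding spanning property for $\mathfrak M_0$ at $r(a_0)$, using once more the invertibility of $Z_r(a_0)\otimes I_p$ on the relevant subspace (an argument similar to Corollary \ref{corocoro}).

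With $\mathfrak M_0$ satisfying the classical hypothesis, I would apply Theorem \ref{thm6-5} to obtain a representation $\mathfrak M_0=\mathrm{span}\{E_0(z)C(A-zB)^{-1}\}$. This places us in the framework of the classical finite-dimensional de Branges $\mathfrak H(E_+,E_-)$ theory: the reproducing kernel of $\mathfrak M_0$ is of the form
\[
K(z,w)=\frac{E_+(z)JE_+(w)^*-E_-(z)JE_-(w)^*}{-i(z-\overline w)}
\]
if and only if the Gram matrix $P$ of the chosen basis satisfies the appropriate Stein identity \eqref{P??}; this is exactly Theorem \ref{poiuy} transposed to the line case (or obtained directly by the computation in \cite{ad-jfa} with $a(z)=z$, $b(z)=1$ playing the role of the pair $(a,b)$ and $J$ scaled by $i$). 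Finally, Theorem \ref{thm4.8} (valid also in the Pontryagin setting because the spaces are finite-dimensional and the correspondence $f\leftrightarrow F$ is bijective and isometric) gives
\[
\mathbb K(z,w)=(Z_r(z)\otimes I_p)K(r(z),r(w))(Z_r(w)\otimes I_p)^*,
\]
which is the claimed formula. The reality of $r$ is used to ensure that the real structure of $Z_r$ (Lemma \ref{rtyuiop}) is compatible with the symmetry hypotheses on $E_\pm$ with respect to the real line.

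The main obstacle I expect is the faithful translation of the invariance property \eqref{ath} from the variable $z$ to the variable $r(z)$: one must verify simultaneously that (i) $F(\alpha)=0$ is equivalent to $f$ vanishing at all preimages $w_n$, (ii) the quotient $F(z)/(z-\alpha)$ still corresponds to a function in the associated space (i.e. $f(z)/(r(z)-\alpha)\in\mathfrak M$ lifts back to $F/(z-\alpha)$ via the same representation), and (iii) the non-degeneracy at $a_0$ transfers to $\mathfrak M_0$ at $r(a_0)$. All three rely on Corollaries \ref{corocoro} and \ref{corocoro1} and on the uniqueness in Theorem \ref{unique123321}. The remaining steps are then formal applications of the classical finite-dimensional de Branges structure theorem combined with the reproducing kernel identity of Theorem \ref{thm4.8}.
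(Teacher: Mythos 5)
Your proposal follows essentially the same route as the paper's own proof: write $f=(Z_r\otimes I_p)F(r)$ via Theorem \ref{234432}, use the uniqueness from Corollary \ref{corocoro} to transfer the inner product and to translate hypothesis \eqref{ath} into the classical invariance condition \eqref{invdb} for the space of the functions $F$, then apply Theorem \ref{thm6-5} and conclude with Theorem \ref{poiuy}. The only deviations are cosmetic (your explicit appeals to Corollary \ref{corocoro1} and Theorem \ref{thm4.8}, and a loose identification of the pair $(a,b)$ when passing to the line case), which is at the same level of detail as the paper's own citation of Theorem \ref{poiuy} at that step.
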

 \begin{proof}
   We write the elements of the space in the form \eqref{le123211111}, $f(z)= (Z_r(z)\otimes I_p)F(r(z))$, where $F$ is analytic in $|z|<\rho$ where $\rho$ is
   independent of $F\in\mathfrak M$; see Theorem \ref{234432}.
      To proceed,
   we divide the argument in a number of steps.\\

   STEP 1: {\sl
     The space $\mathfrak N$ of functions $F(r(z))$ is finite dimensional.}\smallskip

   By Corollary \ref{corocoro} the function $F$ is uniquely determined by $f$, and so $\mathfrak N$ is finite dimensional of dimension $N$
   since $\mathfrak M$ is of this type.\smallskip

   We denote by
   $M(r(z))$ a matrix-valued functions whose columns are a basis of this space.
   We define an inner product via
   \[
       [(Z_r(z)\otimes I_p)M(r(z))\xi, (Z_r(z)\otimes I_p)M(r(z))\eta]=\eta^*P\xi,\quad \eta,\xi\in\mathbb C^N,
     \]
     where $P$ is Hermitian and invertible.\\

     STEP 2: {\sl The space of functions $M(r(z))\xi$ satisfies the invariance condition \eqref{invdb}.}\smallskip

    Indeed, by assumption, $z\mapsto (Z_r(z)\otimes I_p)\frac{M(r(z))}{r(z)-\alpha}\in\mathfrak M$ since $M(r(w_n))=M(\alpha)=0$.\\

     and hence $M(z)$ is of the form \eqref{EABC}. Applying Theorem \ref{thm6-5} we see that the reproducing of $\mathfrak M$ is therefore equal to
     \[
(Z_r(z)\otimes I_p)E(r(z))C(A-r(z)B)^{-1}P^{-1}(A-r(w)B)^{-*}C^*(Z_r(w)^*\otimes I_p)
     \]
     and the proof is concluded by using Theorem \ref{poiuy}.
  \end{proof}

We now turn to the infinite dimensional case, and consider a Hilbert space $\mathfrak H$ of $\mathbb C^m$-valued function analytic in some open set $\Omega$, and
$R^{(r)}_\alpha$-invariant. We assume $\mathfrak H$ separable. We choose an orthonormal basis $b_1(z),...$ of $\mathfrak H$ and consider the unitary map $T$
\[
T(c_1,...) = \sum_{j=1}^\infty c_j b_j
\]
from $\ell^2(\mathbb N,\mathbb C^m)$ onto $\mathfrak H$. Since the space $\mathfrak H$ is assumed $R^{(r)}_\alpha$-invariant, for every
$c\in \ell^2(\mathbb N,\mathbb C^m)$ we have $R^{(r)}_\alpha T c = T d$ for some uniquely defined $d$ in $\ell^2(\mathbb N,\mathbb C^m)$. The map $c\mapsto d$ is linear
and bounded since
\[
||R^{(r)} T c ||_{\mathfrak H}\leq K ||T c||_{\mathfrak H},
\]
and
\[
||R^{(r)} T c ||_{\mathfrak H} = ||T d||_{\mathfrak H},
\]
implying
\[
||T d ||_{\mathfrak H}\leq K ||T c||_{\mathfrak H}.
\]
We write $d = M c$, where $M:\,l^2 \to l^2$ is bounded since
\[
||T d ||_{\mathfrak H} = ||d||_{\ell^2(\mathbb N,\mathbb C^m)},
\]
and
\[
||T c ||_{\mathfrak H} = ||c||_{\ell^2(\mathbb N,\mathbb C^m)}.
\]
So, we obtain
\[
R^{(r)}_\alpha T\,c = T\,M\, c,\,\,\,\forall c\in \ell^2(\mathbb N,\mathbb C^m).
\]
Hence,
\[
\frac{(T c)(z)}{r(z) - \alpha} - \sum_{j=1}^N \frac{(T c)(w_j)}{r'(w_j)(z-w_j)} = (T\,M\,c)(z).
\]
So,
\[
(T c)(z) - (r(z) - \alpha) (T\,M\,c)(z) = \sum_{j=1}^N \frac{(T c)(w_j)}{r'(w_j)}\frac{r(z) - \alpha}{z-w_j}.
\]
The argument is then the same as in the finite dimensional case.

\section{Operator models}
\setcounter{equation}{0}
In this section we assume that the kernel $K(z,w)$ in \eqref{ker567} is diagonal, in the form
\[
    K={\rm diag}\, (k,\ldots, k)
  \]
  where $k(z,w)$ is $\mathbb C^{p\times p}$-valued and positive definite.
Recall that $r$ is a rational function of degree $N$, with a pole at infinity, and hence of the
form $r(z)=p(z)/q(z)$, with ${\rm deg}\, p=N$ and ${\rm deg }\, q<N$.

\begin{proposition} Let $\alpha\in\Omega(r)$ and consider $R^{(r)}_\alpha$ with domain of definition the space function analytic in neighborhoods of the points $w_1,\ldots, w_N$. Then
it holds that
\begin{equation}
\ker  R^{(r)}_\alpha=\mathfrak L(r)\otimes \mathbb C^p
  \end{equation}
\end{proposition}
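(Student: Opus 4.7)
The plan is to verify the two inclusions separately; both follow directly from formula \eqref{bastille} together with the characterization of $\mathfrak L(r)$ given in Lemma \ref{rtyuiop1}.

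For the inclusion $\mathfrak L(r)\otimes\mathbb C^p \subseteq \ker R^{(r)}_\alpha$, I would take $f$ with each scalar component in $\mathfrak L(r)$. Since $R^{(r)}_\alpha$ acts component-wise, it suffices to verify this on scalar $f\in\mathfrak L(r)$. For such $f$, formula \eqref{bastille} says precisely that
\[
\frac{f(z)}{r(z)-\alpha}=\sum_{n=1}^N\frac{f(w_n)}{r'(w_n)(z-w_n)},
\]
so the two terms in the definition \eqref{newtotoche} of $(R^{(r)}_\alpha f)(z)$ cancel.

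For the reverse inclusion, I would start from $R^{(r)}_\alpha f=0$, which by \eqref{newtotoche} rewrites (on the complement of $r^{-1}\{\alpha\}$) as
\[
f(z)=(r(z)-\alpha)\sum_{n=1}^N\frac{f(w_n)}{r'(w_n)(z-w_n)}.
\]
The key observation is that, under the standing hypothesis of the section ($r$ has a pole at infinity, so $\deg p=N>\deg q$), the equation $r(z)=\alpha$ is equivalent to $p(z)-\alpha q(z)=0$, and $\alpha\in\Omega(r)$ gives
\[
p(z)-\alpha q(z)=c\prod_{n=1}^N(z-w_n),
\]
where $c$ is the leading coefficient of $p$. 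Hence
\[
\frac{r(z)-\alpha}{z-w_n}=\frac{c\prod_{m\neq n}(z-w_m)}{q(z)},
\]
which is a rational function of the form $m(z)/q(z)$ with $\deg m\le N-1$, and so lies in $\mathfrak L(r)$ by Lemma \ref{rtyuiop1}. Consequently $f$ is expressed as a linear combination of such elements of $\mathfrak L(r)$ with vector coefficients $f(w_n)/r'(w_n)\in\mathbb C^p$, i.e.\ $f\in\mathfrak L(r)\otimes\mathbb C^p$.

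There is no real obstacle here; the only subtlety worth a line is the domain issue, namely that elements of $\ker R^{(r)}_\alpha$ are \emph{a priori} only germs defined in a neighborhood of $r^{-1}\{\alpha\}$, while elements of $\mathfrak L(r)\otimes\mathbb C^p$ are genuine rational functions. The displayed identity above shows that $f$ coincides, on its domain, with a specific element of $\mathfrak L(r)\otimes\mathbb C^p$, so under the usual identification of a function with its (unique) analytic continuation to $\mathbb C\setminus r^{-1}\{\infty\}$, the equality of sets is exact.
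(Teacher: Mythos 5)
Your proof is correct and takes essentially the same route as the paper: both inclusions hinge on rearranging the defining formula \eqref{newtotoche}, with the containment $\mathfrak L(r)\otimes\mathbb C^p\subseteq\ker R^{(r)}_\alpha$ coming from the cancellation expressed by \eqref{bastille} and the reverse inclusion from recognizing the summands $\dfrac{r(z)-\alpha}{z-w_n}$ as elements of $\mathfrak L(r)$. The only cosmetic difference is that the paper identifies $\dfrac{r(z)-\alpha}{z-w_n}=\dfrac{r(z)-r(w_n)}{z-w_n}$ directly as a generator of $\mathfrak L(r)$, whereas you reach the same conclusion via the factorization of $p-\alpha q$ and Lemma \ref{rtyuiop1}.
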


\begin{proof} We assume $p=1$ to lighten the notation. Assume that $f\in\ker R^{(r)}_\alpha$. Then,
  \[
f(z)=\sum_{n=1}^N\frac{f(w_n)}{r^\prime(w_n)}\frac{r(z)-r(w_n)}{z-w_n}\in\mathfrak L(r)
\]
where $r(w_n)=\alpha$ and since the function
\[
z\mapsto \frac{r(z)-r(\beta)}{z-\beta}\in\mathfrak L(r)
\]
for every complex number which is not a pole of $r$. The functions $R_wr$ form a basis of $\mathfrak L(r)$ for any set of $N$ pairwise different points, and so the kernel of
$R^{(r)}_\alpha$ is exactly equal to $\mathfrak L(r)$. More precisely, any function $f$ in $\mathfrak L(r)$ is of the form
\[
f(z)=\sum_{n=1}^N\frac{c_n}{r^\prime(w_n)}\frac{r(z)-r(w_n)}{z-w_n}\in \mathfrak L(r), \quad c_1,\ldots, c_N\in \mathbb C,
\]
and is such that $f(w_m)=c_m$, $m=1,\ldots, N$, as is seen by plugging $z=w_m$ in the above equation. So $f\in\ker R^{(r)}_\alpha$.
\end{proof}

We now use \eqref{formulacoro} to relate operators associated to the operators $R_\alpha$ and $R_\alpha^{(r)}$. The existence of the operators $\mathsf A$ and $\mathsf A_r$ in the next theorem
follow from \cite[Theorem 4.10, p. 137]{Stone}).

\begin{theorem}
Assuming that the operators $R^{(r)}_\alpha$ and $R_\alpha$ are bounded in some Hilbert space $\mathfrak H$, and have trivial kernels in $\mathcal H$ for every $\alpha\in\Omega(r)$. Then,
there exist two possibly unbounded operators $\mathsf A$ and $\mathsf A_r$ such that
\begin{equation}
R^{(r)}_\alpha f=(\mathsf A_r-\alpha I)^{-1}\quad {\it and}\quad R_\alpha F=(\mathsf A-\alpha I)^{-1}.
\end{equation}
They are related by
\[
((\mathsf A_r-\alpha I)^{-1} f)(z)=\sum_{n=1}^Ne_n(z)((\mathsf A-\alpha I)^{-1}F_n)(r(z))
\]
when $f$ is decomposed as \eqref{decompo},
\begin{equation*}
  f(z)=\sum_{n=1}^N e_n(z)F_n(r(z)).
\end{equation*}
\end{theorem}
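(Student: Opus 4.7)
The plan is to obtain the existence of $\mathsf A$ and $\mathsf A_r$ from a single application of Stone's theorem, and then to derive the stated identity by feeding the decomposition \eqref{decompo} into the intertwining formula \eqref{formulacoro} of Corollary \ref{corocoro1}.

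First I would verify the hypotheses of \cite[Theorem 4.10, p.\ 137]{Stone} for each of the two families. The family $\{R_\alpha\}_{\alpha\in\Omega(r)}$ consists of bounded operators on the relevant reproducing kernel space, satisfies the classical resolvent identity, and is assumed to have trivial kernel; Stone's theorem then produces a (possibly unbounded) closed operator $\mathsf A$ such that $R_\alpha=(\mathsf A-\alpha I)^{-1}$ on its range. The identical reasoning applies to $\{R^{(r)}_\alpha\}$, which satisfies the resolvent identity \eqref{reso45} (originally from \cite{AJLV15} and re-derived in Corollary \ref{011821}) and is assumed to have trivial kernel in $\mathfrak H$; this yields the operator $\mathsf A_r$ with $R^{(r)}_\alpha=(\mathsf A_r-\alpha I)^{-1}$.

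Next, given $f\in\mathfrak H$, I invoke the block-diagonal structure of $K$ assumed at the start of the section, which together with Theorem \ref{234432} gives the decomposition
\[
f(z)=\sum_{n=1}^N e_n(z)F_n(r(z))=Z_r(z)F(r(z)),\qquad F=(F_1,\ldots,F_N)^t,
\]
with each $F_n\in\mathfrak H(k)$. Applying Corollary \ref{corocoro1} and using that the classical $R_\alpha$ acts coordinatewise on vector-valued functions gives
\[
(R^{(r)}_\alpha f)(z)=Z_r(z)(R_\alpha F)(r(z))=\sum_{n=1}^N e_n(z)(R_\alpha F_n)(r(z)).
\]
Substituting $R^{(r)}_\alpha f=(\mathsf A_r-\alpha I)^{-1}f$ on the left and $R_\alpha F_n=(\mathsf A-\alpha I)^{-1}F_n$ on the right then gives exactly the stated identity.

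The main obstacle is a bookkeeping one: one has to justify that the operator $\mathsf A$ whose existence Stone's theorem provides on $\mathfrak H(K)$ does act diagonally in the orthogonal decomposition $\mathfrak H(K)=\bigoplus_{n=1}^N\mathfrak H(k)$ that comes from the block-diagonal $K$, so that $((\mathsf A-\alpha I)^{-1}F)_n$ really equals $(\mathsf A-\alpha I)^{-1}F_n$ with $\mathsf A$ interpreted on the single summand $\mathfrak H(k)$. This is essentially a consequence of the trivial-kernel hypothesis on $R^{(r)}_\alpha$ combined with the uniqueness of the decomposition \eqref{decompo} supplied by Theorem \ref{unique123321}; once this identification is in place, the identity follows from Corollary \ref{corocoro1} with no further computation.
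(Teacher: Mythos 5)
Your proposal is correct and takes essentially the same route as the paper: there, too, the existence of $\mathsf A$ and $\mathsf A_r$ is obtained from Stone's theorem \cite[Theorem 4.10, p. 137]{Stone} applied to the two bounded, injective resolvent families satisfying \eqref{reso45} and the classical resolvent identity, and the intertwining formula is read off from \eqref{formulacoro} of Corollary \ref{corocoro1} applied to the decomposition \eqref{decompo}. The diagonal bookkeeping you flag is settled simply by the componentwise action of $R_\alpha$ on the block-diagonal space $\mathfrak H(K)=\oplus_{n=1}^N\mathfrak H(k)$, so no extra argument is required.
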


\begin{theorem}
In the notation of the previous theorem, assume that
  \begin{equation}
    \label{inter678}
    \mathfrak H\cap \mathfrak L(r)=\left\{0\right\}.
    \end{equation}
  Then
  \begin{equation}
    {\rm Dom}\, \mathsf A_r=\left\{f\in\mathfrak H\,\, \text{for which there exists $h_f\in\mathfrak L(r)$ such that } r(z)f(z)+h_f(z)\in\mathfrak H\right\}
  \end{equation}
  and then
  \begin{equation}
    \label{newmodel}
    (\mathsf A_r f)(z)=r(z)f(z)+h_f(z).
    \end{equation}
\end{theorem}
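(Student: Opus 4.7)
The plan is to unwrap the definition $R^{(r)}_\alpha = (\mathsf A_r - \alpha I)^{-1}$ for a fixed $\alpha\in\Omega(r)$ and read off the required characterization by inverting the formula \eqref{newtotoche} modulo $\mathfrak L(r)$. The key identity we will repeatedly exploit is \eqref{bastille}, which says that $R^{(r)}_\alpha$ annihilates $\mathfrak L(r)$; this is what makes the summand $h_f$ invisible to $R^{(r)}_\alpha$ and hence genuinely free. The hypothesis $\mathfrak H\cap\mathfrak L(r)=\{0\}$ will only be used at the end, to pin down $h_f$ uniquely.

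For the forward inclusion, I would take $f\in{\rm Dom}\,\mathsf A_r$, so $f=R^{(r)}_\alpha g$ for a unique $g\in\mathfrak H$. Multiplying the defining identity
\[
f(z)=\frac{g(z)}{r(z)-\alpha}-\sum_{n=1}^N\frac{g(w_n)}{r^\prime(w_n)(z-w_n)}
\]
by $r(z)-\alpha$ and using $r(z)-\alpha=r(z)-r(w_n)$ in the sum, one gets
\[
(r(z)-\alpha)f(z)=g(z)-h(z),\qquad h(z):=\sum_{n=1}^N\frac{g(w_n)}{r^\prime(w_n)}\,\frac{r(z)-r(w_n)}{z-w_n}\in\mathfrak L(r).
\]
Rearranging yields $r(z)f(z)+h(z)=g(z)+\alpha f(z)\in\mathfrak H$, so $h_f:=h$ does the job, and along the way $\mathsf A_r f=g+\alpha f=r(z)f(z)+h_f(z)$.

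For the converse, I would assume $f\in\mathfrak H$ and $h_f\in\mathfrak L(r)$ with $g(z):=r(z)f(z)+h_f(z)\in\mathfrak H$, and simply compute $R^{(r)}_\alpha(g-\alpha f)$ using linearity. The piece $R^{(r)}_\alpha((r-\alpha)f)$ collapses to $f$ because $(r(w_n)-\alpha)=0$ kills the subtracted sum in \eqref{newtotoche}; the piece $R^{(r)}_\alpha h_f$ is zero by \eqref{bastille}. Hence $f=R^{(r)}_\alpha(g-\alpha f)\in{\rm Ran}\,R^{(r)}_\alpha={\rm Dom}\,\mathsf A_r$, and applying $\mathsf A_r-\alpha I$ to both sides gives $\mathsf A_r f=g=r(z)f(z)+h_f(z)$.

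Finally, uniqueness of the decomposition is where the transversality hypothesis enters: if $h_f,h_f'\in\mathfrak L(r)$ both satisfy $rf+h_f,\,rf+h_f'\in\mathfrak H$, then their difference $h_f-h_f'$ lies in $\mathfrak H\cap\mathfrak L(r)=\{0\}$. I expect the main technical point — though not really an obstacle — to be keeping track of the bookkeeping between $R^{(r)}_\alpha f$, $(\mathsf A_r-\alpha I)^{-1}$ and the affine correction $g-\alpha f$; once one invokes \eqref{bastille} at the right moment, the cancellation is automatic and the proof is essentially a two-line inversion of \eqref{newtotoche}.
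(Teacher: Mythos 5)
Your proposal is correct, and it is in fact more explicit than the paper's own argument. The paper's proof of this theorem is a single sentence: it observes that, since the kernel of $R^{(r)}_\alpha$ on analytic functions is exactly $\mathfrak L(r)$ (the proposition preceding the theorem), hypothesis \eqref{inter678} forces $\ker R^{(r)}_\alpha=\{0\}$ in $\mathfrak H$, so the operator $\mathsf A_r$ exists by Stone's theorem; the characterization of ${\rm Dom}\,\mathsf A_r$ and formula \eqref{newmodel} are left implicit (with only a remark afterwards relating $\mathsf A_r$ to the model operator $\mathsf A$ through the decomposition $f=Z_rF(r)$). You supply exactly the missing computation: the forward direction by multiplying the resolvent formula \eqref{newtotoche} through by $r(z)-\alpha$ and recognizing the subtracted sum as an element of $\mathfrak L(r)$, the converse by applying $R^{(r)}_\alpha$ to $(r-\alpha)f+h_f$ and using \eqref{bastille} to annihilate the $\mathfrak L(r)$ part, and \eqref{inter678} to make $h_f$ unique so that \eqref{newmodel} is well defined. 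The only difference in emphasis is where \eqref{inter678} enters: in the paper its role is to guarantee that $R^{(r)}_\alpha$ is injective on $\mathfrak H$ (so that $\mathsf A_r$ exists at all), whereas you take the relation $R^{(r)}_\alpha=(\mathsf A_r-\alpha I)^{-1}$ as inherited from the previous theorem and use \eqref{inter678} only for the uniqueness of $h_f$; both uses are legitimate, and a fully self-contained write-up would mention that the hypothesis does this double duty. No gaps otherwise.
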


\begin{proof}
  By \eqref{inter678} we have $\ker R^{(r)}_\alpha=\left\{0\right\}$ for some (and hence all) $\alpha$, and so the operator $\mathsf A_r$ exists; see
\cite[Theorem 4.10, p. 137]{Stone}).
\end{proof}

We remark that, with $(\mathsf A f)(z)=zf(z)+d_f$ we have

\[
  \begin{split}
    \sum_{n=1}^Ne_n(z)((A-\alpha I)^{-1}f_n)(r(z)&=\sum_{n=1}^Ne_n(z)(r(z)f_n(r(z))+d_{f_n})\\
    &=r(z)\left(\sum)_{n=1}^N e_n(z)f_n(r(z))\right)+\sum_{n=1}^Ne_n(z)d_{f_n}\\
    &=r(z)Z_r(z)F(r(z))+h_f(z)\\
    &=r(z)f(z)+h_f(z)
  \end{split}
  \]
with $f(z)=Z_r(z)F(r(z))$ and $h_f(z)=\sum_{n=1}^N e_n(z)d_{f_n}$.\\

\section{$\mathfrak P(\Theta)$ spaces}
\setcounter{equation}{0}
We now study the counterparts of reproducing kernel Pontryagin spaces with reproducing kernel of the form
\begin{equation}
\frac{J-\Theta(z)J\Theta(w)^*}{-i(z-\overline{w})},\quad{\rm or}\quad\frac{J-\Theta(z)J\Theta(w)^*}{1-z\overline{w}},
\end{equation}
in the present setting. The first formula corresponds to the {\sl line case}, while the second corresponds to the {\sl circle case.}
These kernels give models for close to self-adjoint and close to unitary operators in the classical case; see \cite{MR48:904,l1} for the latter.\smallskip

We begin with the line case.
In the statement of the next theorem, $J\in\mathbb R^{p\times p}$ is a signature matrix and $X(J,r)$ is the corresponding associated symmetric matrix (see \eqref{Pgram}).
In the statements we consider functions $r$ which satisfy the conditions of Theorem \ref{234432}. This includes in particular rational functions regular at the origin and having a pole at infinity
(see Lemma \ref{lemma123}) and finite Blaschke products. Recall that $\Omega(r)$ is defined by \eqref{rtyuiop}, that
$A(r)$ is defined in \eqref{domain123} and that we assume functions extended to $r^{-1}(A(r))$; see Remark \ref{key}.

\begin{theorem} Let $r$ be a {\rm real} rational function satisfying the hypothesis of Theorem \ref{234432}.
  Let $\mathfrak P$ be a reproducing kernel Pontryagin space of $\mathbb C^p$-valued functions analytic in $r^{-1}(A(r))$. Then, for each $\alpha,\beta\in \Omega(r)\cap A(r)$
  such that $\alpha\not=\overline{\beta}$, it holds that
\begin{equation}
  \left[ R^{(r)}_\alpha f,g\right]_{\mathfrak P} -\left[ f, R_\beta^{(r)} g\right]_{\mathfrak P}  -(\alpha-\overline{\beta})\left[ R^{(r)}_\alpha f,R^{(r)}_\beta g\right]_{\mathfrak P}
  =i(\alpha-\overline{\beta})\sum_{i,j=1}^N
  \frac{g(v_j)^*}{\overline{r^\prime(v_j)}}J\frac{f(u_i)}{r^\prime(u_i)}\frac{1}{u_i-\overline{v_j}}
\end{equation}
(with $r(u_i)=\alpha$ and $r(v_j)=\beta$, $i,j=1,\ldots, N$)
  if and only if the reproducing kernel of $\mathfrak P$ is of the form
  \begin{equation}
 ( Z_r(z)\otimes I_p)\frac{X(J,r)^{-1}-\Theta(r(z))X(J,r)\Theta(r(w))^*}{-i(r(z)-\overline{r(w)})}(Z_r(w)\otimes I_p)^*
\end{equation}
where $\Theta$ is a $\mathbb C^{Np\times Np}$-valued function analytic in a neighborhood of the origin.
\label{23443211}
  \end{theorem}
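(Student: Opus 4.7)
The plan is to reduce the statement to the classical line-case structure theorem for de Branges--Rovnyak spaces via the representation Theorem \ref{234432} and its companion Theorem \ref{thm4.8}. By those results, every $f \in \mathfrak{P}$ has the form $f(z) = (Z_r(z) \otimes I_p) F(r(z))$ with $F$ analytic near the origin, the map $f \mapsto F$ is an isometry onto a reproducing kernel Pontryagin space $\mathfrak{H}$ with kernel $K$, and the kernel $\mathbb{K}$ of $\mathfrak{P}$ is obtained from $K$ by the push-forward formula \eqref{ker567}. By Corollary \ref{corocoro1} (extended to the tensor setting) one has $R^{(r)}_\alpha f(z) = (Z_r(z) \otimes I_p)(R_\alpha F)(r(z))$, so the left-hand side of the claimed identity in $\mathfrak{P}$ equals the classical left-hand side $[R_\alpha F, G]_{\mathfrak{H}} - [F, R_\beta G]_{\mathfrak{H}} - (\alpha - \overline{\beta})[R_\alpha F, R_\beta G]_{\mathfrak{H}}$ in $\mathfrak{H}$.

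The key technical step is to identify the right-hand side. Since $r(u_i) = \alpha$ and $r(v_j) = \beta$, we have $f(u_i) = (Z_r(u_i) \otimes I_p)F(\alpha)$ and $g(v_j) = (Z_r(v_j) \otimes I_p)G(\beta)$, so the sum reduces to
\[
i(\alpha-\overline{\beta})\, G(\beta)^* \Biggl[\sum_{i,j=1}^N \frac{(Z_r(v_j)\otimes I_p)^* J (Z_r(u_i)\otimes I_p)}{r'(u_i)\,\overline{r'(v_j)}\,(u_i-\overline{v_j})}\Biggr] F(\alpha).
\]
Applying \eqref{2.5} at the point $z=\overline{v_j}$ (using that $r$ is real, so $r(\overline{v_j})=\overline{\beta}$), the inner sum over $i$ collapses to $(Z_r(\overline{v_j})\otimes I_p)/(\alpha-\overline{\beta})$. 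Using again reality (so that $Z_r(v_j)^* = Z_r(\overline{v_j})^t$ and $\overline{r'(v_j)} = r'(\overline{v_j})$), the surviving sum over $j$ is precisely $\sum_j (Z_r(\overline{v_j})\otimes I_p)^t J (Z_r(\overline{v_j})\otimes I_p)/r'(\overline{v_j})$, which by Corollary \ref{amstramgram} equals $X(J,r)$ (independent of $\alpha,\beta$, since $\overline\beta\in\Omega(r)$). The $(\alpha-\overline\beta)$ factors cancel, leaving exactly $i\,G(\beta)^* X(J,r)\, F(\alpha)$.

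Thus on $\mathfrak{H}$ the identity becomes the classical line-case structure identity, but with the Hermitian invertible matrix $X(J,r)$ in place of the signature matrix $J$. Factoring $X(J,r) = T^* J_0 T$ with $J_0$ a signature matrix and $T$ invertible, and setting $\widetilde F = T F$, $\widetilde G = T G$, the right-hand side transforms into $i\widetilde G(\beta)^* J_0 \widetilde F(\alpha)$, and the classical structure theorem (see \cite{ball-contrac,ad-jfa,HM}) yields that the pulled-back kernel has the form $\widetilde K(z,w) = \frac{J_0 - \widetilde\Theta(z) J_0 \widetilde\Theta(w)^*}{-i(z-\overline{w})}$. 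Translating back via $K = T^{-1}\widetilde K T^{-*}$ and defining $\Theta(z) := T^{-1}\widetilde\Theta(z)\, T\, X(J,r)^{-1}$, the identities $T^{-1}J_0 T^{-*} = X(J,r)^{-1}$ and $T^{-1}\widetilde\Theta(z) J_0 \widetilde\Theta(w)^* T^{-*} = \Theta(z) X(J,r) \Theta(w)^*$ (direct computation) give
\[
K(z,w) = \frac{X(J,r)^{-1} - \Theta(z) X(J,r) \Theta(w)^*}{-i(z-\overline{w})}.
\]
Pushing forward through \eqref{ker567} delivers the stated reproducing kernel for $\mathfrak{P}$. The converse direction is obtained by reversing these steps: assuming the kernel of $\mathfrak{P}$ has the stated form, the formula \eqref{ker567} identifies the pulled-back kernel on $\mathfrak{H}$; the classical structure theorem then guarantees the classical identity with parameter $X(J,r)$, which via Corollary \ref{corocoro1} and the calculation of paragraph two yields the identity in $\mathfrak{P}$.

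The main obstacle is the computational identification in paragraph two, which is where both the partial-fraction identity \eqref{2.5} and the $\alpha$-independence of $X(J,r)$ (Lemma \ref{lemm26} / Corollary \ref{amstramgram}) are essential; without both inputs, the right-hand side cannot be collapsed to the constant Hermitian form that the classical structure theorem requires. A secondary subtlety is bookkeeping the change-of-basis $X(J,r) = T^*J_0 T$ carefully enough to produce the asymmetric $X(J,r)^{-1}/X(J,r)$ pairing in the numerator as written, rather than a symmetric $X(J,r)^{-1}/X(J,r)^{-1}$ pairing; this is achieved by absorbing a factor of $X(J,r)^{-1}$ into the definition of $\Theta$.
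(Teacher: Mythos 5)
Your proposal is correct and follows essentially the same route as the paper: write $f=(Z_r\otimes I_p)F(r)$, transfer the left side via Corollary \ref{corocoro1}, collapse the right side to $iG(\beta)^*X(J,r)F(\alpha)$ using \eqref{2.5} at $\overline{v_j}$ together with the reality of $r$ and Corollary \ref{amstramgram}, then factor $X(J,r)$ through a signature matrix, apply the de Branges structure theorem and translate back (your explicit bookkeeping of $X=T^*J_0T$ and the definition of $\Theta$ just spells out what the paper leaves implicit in ``from which the result follows''). The only refinement is that, $\mathfrak P$ being a Pontryagin space, one should invoke the Pontryagin-space version of the structure theorem (the paper uses \cite[Theorem 6.10]{ad3}) and note that its hypothesis is met because the functions $F$ are analytic across the real line, being analytic in $|z|<\rho$.
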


\begin{proof}
  Using \eqref{formulacoro} we can write with $f=Z_rF(r)$ and $g=Z_rG(r)$, where $F$ and $G$ are analytic in $A(r)$. By Corollary \ref{corocoro} applied to every connected component of
  $r^{-1}(A(r))$ we see that $F$ and $G$ are uniquely determined, and we can define an inner product by
  \begin{equation}
    \label{inner1}
    [F,G]=[f,g]_{\mathfrak P}.
    \end{equation}

Fo $\alpha,\beta\in A(r)$ we have
\begin{equation}
  \begin{split}
    \left[ R^{(r)}_\alpha f,g\right]_{\mathfrak P} -\left[ f, R_\beta^{(r)} g\right]_{\mathfrak P}  -(\alpha-\overline{\beta})\left[ R^{(r)}_\alpha f,R^{(r)}_\beta g\right]_{\mathfrak P}  &=\\
  &\hspace{-4cm} = \left[ R_\alpha F,G\right] -\left[ F, R_\beta G\right]  -(\alpha-\overline{\beta})\left[ R_\alpha F,R_\beta G\right],\quad
  \end{split}
\end{equation}

Using \eqref{2.5} we can write
\[
\begin{split}
  (\alpha-\overline{\beta})\sum_{i,j=1}^N\frac{g(v_j)^*}{\overline{r^\prime(v_j)}}J\frac{f(u_i)}{r^\prime(u_i)}\frac{1}{u_i-\overline{v_j}}&=
  G(\beta)^* \left(\sum_{i,j=1}^N\frac{Z_r(v_j)^*}{\overline{r^\prime(v_j)}}J\frac{Z_r(u_i)}{r^\prime(u_i)}\frac{\alpha-\overline{\beta}}{u_i-\overline{v_j}}\right)F(\alpha)\\
  &=   G(\beta)^* \left(\sum_{j=1}^N\frac{Z_r(v_j)^*}{\overline{r^\prime(v_j)}}J\left(\sum_{i=1}^N\frac{Z_r(u_i)}{r^\prime(u_i)}\frac{\overline{\beta}-\alpha}{\overline{v_j}-u_i}\right)
\right)F(\alpha)\\
      &=   G(\beta)^* \left(\sum_{j=1}^N\frac{Z_r(v_j)^*}{\overline{r^\prime(v_j)}}J\left(\sum_{i=1}^N\frac{Z_r(u_i)}{r^\prime(u_i)}\frac{\overline{\beta}-\alpha}{\overline{v_j}-u_i}\right)
      \right)F(\alpha)\\
            &=   G(\beta)^* \left(\sum_{j=1}^N\frac{Z_r(v_j)^*}{\overline{r^\prime(v_j)}}J\left(Z_r(\overline{v_j})\frac{\overline{\beta}-\alpha}{\overline{\beta}-\alpha}\right)
            \right)F(\alpha)\\
            &=   G(\beta)^* XF(\alpha),
\end{split}
\]
where $X(J,r)$ is defined by \eqref{Pgram} (and is Hermitian, invertible, and independent of $\alpha$ and $\beta$). We are therefore back to the classical case,

\[
\left[ R_\alpha F,G\right] -\left[ F, R_\beta G\right]  -(\alpha-\overline{\beta})\left[ R_\alpha F,R_\beta G\right]=iG(\beta)^*X(J,r)F(\alpha)
\]
Write $X(J,r)=Y^*J_0Y$, where $J_0$ is a real signature matrix. The space $\mathfrak P_1$ of functions of the form $YF$ with inner product
\begin{equation}
  \label{inner2}
  [YF,YG]_Y=[F,G]
\end{equation}
satisfies
\[
\left[ R_\alpha YF,YG\right] -\left[ YF, R_\beta YG\right]  -(\alpha-\overline{\beta})\left[ R_\alpha YF,R_\beta YG\right]=iG(\beta)^*Y^*J_0YF(\alpha)
\]
and are analytic across the real line (since the functions $F$ are analytic in a disk $|z|<\rho$ by definition of $A(r)$). This analyticity condition allows us to apply the Pontryagin space version of
de Branges' result (see \cite[Theorem 6.10]{ad3}, and see \cite[Theorem III, p. 447]{dbhsaf1}, \cite[Theorem 2.3 p. 598]{ad1} for the Hilbert space version) to assert that its reproducing kernel is of the form
\begin{equation}
  \label{ttttt}
\frac{J_0-\Theta_0(z)J_0\Theta_0(w)^*}{-i(z-\overline{w})}.
  \end{equation}
  from which the result follows. More precisely, we see that the index of the reproducing
  kernel Pontryagin space with reproducing kernel \eqref{ttttt} is equal to $\nu_-(\mathfrak P)$.
\end{proof}

\begin{notation}
  We will set $\mathfrak P=\mathfrak P(\Theta)$ and define
  \begin{equation}
    \nu_-(\Theta)=\nu_-(\mathfrak P),
  \end{equation}
  where the latter is the index of negativity of the Pontryagin space $\mathfrak P$.
\end{notation}

We now consider an inclusion and factorization result related to the above spaces. In the statement, $\nu_-(\Theta_1^{-1}\Theta)$ is the  number of negative squares of the kernel
\[
  \frac{X(J,r)-\Theta_1(r(z))^{-1}\Theta(r(z))X(J,r)\Theta(r(w))^*\Theta_1(r(z))^{-*}}{-i(r(z)-\overline{r(w)})}
\]
(note that the numerator involves only $X(J,r)$ and not its inverse).
\begin{theorem}
  In the notation of the previous theorem, let $\mathfrak P_1$ be a subspace of $\mathfrak P$ which is $R_\alpha^{(r)}$-invariant, and non-degenerate. Then $\mathfrak P_1$ has a reproducing kernel of the
  form
  \[
( Z_r(z)\otimes I_p)\frac{X(J,r)^{-1}-\Theta_1(r(z))X(J,r)\Theta_1(r(w))^*}{-i(r(z)-\overline{r(w)})}(Z_r(w)\otimes I_p)^*
\]
and we have
\[
\nu_-(\Theta)=\nu_-(\Theta_1)+\nu_-(\Theta_1^{-1}\Theta)
\]
and the direct and orthogonal decomposition
\begin{equation}
\mathfrak P(\Theta)=\mathfrak P(\Theta_1)\oplus (Z_r\otimes I_p)\Theta_1(r)\mathfrak P(\Theta_1^{-1}\Theta).
\end{equation}
\end{theorem}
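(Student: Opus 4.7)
The plan is to reduce the statement to the classical Beurling--Lax theorem in the Pontryagin space setting, by means of the same isomorphism that was used in the proof of Theorem \ref{23443211}. By Theorem \ref{234432} and Corollary \ref{corocoro}, every $f\in\mathfrak P(\Theta)$ admits a unique representation $f(z)=(Z_r(z)\otimes I_p)F(r(z))$ with $F$ analytic in $A(r)$. Writing $X(J,r)=Y^*J_0Y$ and setting $\tilde F = YF$, the assignment $f\mapsto \tilde F$ is a Pontryagin-space isomorphism from $\mathfrak P(\Theta)$ onto a classical de Branges--Rovnyak space $\mathfrak P(\Theta_0)$ with reproducing kernel $\frac{J_0-\Theta_0(z)J_0\Theta_0(w)^*}{-i(z-\overline w)}$, where $\Theta_0$ is determined by $\Theta$ through the normalization fixed in Theorem \ref{23443211}. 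Crucially, by Corollary \ref{corocoro1}, under this isomorphism the operator $R_\alpha^{(r)}$ on the $f$-side corresponds to the classical operator $R_\alpha$ on the $\tilde F$-side.

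Consequently, the non-degenerate, $R_\alpha^{(r)}$-invariant subspace $\mathfrak P_1 \subset \mathfrak P(\Theta)$ maps to a non-degenerate, $R_\alpha$-invariant subspace $\widetilde{\mathfrak P_1}$ of $\mathfrak P(\Theta_0)$. The indefinite Beurling--Lax theorem for such subspaces (cf.\ \cite{ad3,ad1}) then furnishes a factorization $\Theta_0 = \Theta_{0,1}\Theta_{0,2}$ such that $\widetilde{\mathfrak P_1}=\mathfrak P(\Theta_{0,1})$, the additivity of negative indices $\nu_-(\Theta_0)=\nu_-(\Theta_{0,1})+\nu_-(\Theta_{0,2})$, and the direct and orthogonal decomposition
\begin{equation*}
\mathfrak P(\Theta_0)=\mathfrak P(\Theta_{0,1})\oplus \Theta_{0,1}\mathfrak P(\Theta_{0,2}).
\end{equation*}

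Pulling back under the inverse isomorphism, I would define $\Theta_1$ as the avatar of $\Theta_{0,1}$, so that $\Theta_{0,2}$ corresponds to $\Theta_1^{-1}\Theta$. The kernel formula for $\mathfrak P_1$ and the index additivity are then immediate, and the orthogonal decomposition is obtained by noting that the isomorphism carries the classical factor $\Theta_{0,1}\mathfrak P(\Theta_{0,2})$ onto exactly $(Z_r\otimes I_p)\Theta_1(r)\mathfrak P(\Theta_1^{-1}\Theta)$: indeed, if $\tilde G\in\mathfrak P(\Theta_{0,2})$ corresponds to $g\in \mathfrak P(\Theta_1^{-1}\Theta)$, then $\Theta_{0,1}\tilde G$ corresponds to $(Z_r\otimes I_p)\Theta_1(r)\,g(r)$ after the appropriate $Y^{-1}$ application, by the very definition of the correspondence on both spaces. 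The main obstacle, and the only real work beyond invoking the classical theorem, is the bookkeeping needed to verify that the multiplier $(Z_r\otimes I_p)\Theta_1(r)$ is the correct transcription of $\Theta_{0,1}$ through the $Y$-conjugation, and that the asymmetric $X(J,r)^{-1}$-versus-$X(J,r)$ pattern in the numerator of the $\mathfrak P(\Theta_1)$ kernel is faithfully inherited from the factorization $\Theta_0=\Theta_{0,1}\Theta_{0,2}$; this is a direct matrix computation rather than a conceptual step.
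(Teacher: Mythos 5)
Your proposal is correct and follows essentially the same route as the paper: reduce to the classical one-variable Pontryagin space setting through the unique representation $f(z)=(Z_r(z)\otimes I_p)F(r(z))$ and the intertwining of Corollary \ref{corocoro1}, then invoke the classical factorization/decomposition results for $R_\alpha$-invariant non-degenerate subspaces and pull back. The only difference is presentational: you pass through the normalization $X(J,r)=Y^*J_0Y$ as in Theorem \ref{23443211}, whereas the paper skips this step and writes the telescoping kernel identity directly in terms of $X(J,r)$, namely
\[
\frac{X^{-1}-\Theta(r(z))X\Theta(r(w))^*}{-i(r(z)-\overline{r(w)})}
=\frac{X^{-1}-\Theta_1(r(z))X\Theta_1(r(w))^*}{-i(r(z)-\overline{r(w)})}
+\Theta_1(r(z))\frac{X-\Theta_1^{-1}\Theta(r(z))X\Theta(r(w))^*\Theta_1(r(w))^{-*}}{-i(r(z)-\overline{r(w)})}\Theta_1(r(w))^*,
\]
which settles the bookkeeping you flagged about the $X^{-1}$-versus-$X$ pattern.
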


\begin{proof} Thanks to Corollary \ref{corocoro1} the claim reduces to the decomposition
  \[
    \begin{split}
      \frac{X(J,r)^{-1}-\Theta(r(z))X(J,r)\Theta(r(w))^*}{-i(r(z)-\overline{r(w)})}&=\frac{X(J,r)^{-1}-\Theta_1(r(z))X(J,r)\Theta_1(r(w))^*}{-i(r(z)-\overline{r(w)})}+\\
      &\hspace{-6cm}+\Theta_1(r(z))\frac{X(J,r)-\Theta_1(r(z))^{-1}\Theta(r(z))X(J,r)\Theta(r(w))^*\Theta_1(r(z))^{-*}}{-i(r(z)-\overline{r(w)})}\Theta_1(r(z))^*,
    \end{split}
    \]
and the result follows then from classical results in reproducing kernel Pontryagin spaces.
\end{proof}

We now turn to the circle case. The argument is similar to the one in the previous theorem, but we need to use Corollary \ref{c123} at some point $w_0$ of modulus $1$ not a pole of $r$. Recall that
$A(r,w_0)$ was introduced in Definition \ref{def567}.

\begin{theorem} Let $r$ be a rational function satisfying the hypothesis of Theorem \ref{234432}, let $w_0$ be a point on the unit circle not a pole of $r$.
    Let $\mathfrak P$ be a reproducing kernel Pontryagin space of $\mathbb C^p$-valued functions analytic in $r^{-1}(A(w_0,r))$. Then, for each $\alpha,\beta\in
    \omega(r)\cap A(w_0,r)$ it holds that
\begin{equation}
  \left[ f,g\right]_{\mathfrak P} +\overline{\beta}\left[ f, R_\beta^{(r)} g\right]_{\mathfrak P} +\alpha\left[R_\alpha^{(r)}f,g\right]_{\mathfrak P} -(1-\alpha\overline{\beta})
  \left[ R^{(r)}_\alpha f,R^{(r)}_\beta g\right]_{\mathfrak P}
    =(\alpha-\overline{\beta})\sum_{i,j=1}^N
  \frac{g(v_j)^*}{\overline{r^\prime(v_j)}}J\frac{f(u_i)}{r^\prime(u_i)}\frac{1}{u_i-\overline{v_j}}
\end{equation}
(with $r(u_i)=\alpha$ and $r(v_j)=\beta$, $i,j=1,\ldots, N$)
  if and only if the reproducing kernel of $\mathfrak P$ is of the form
  \begin{equation}
 ( Z_r(z)\otimes I_p)\frac{X(J,r)^{-1}-\Theta(r(z))X(J,r)\Theta(r(w))^*}{1-r(z)\overline{r(w)}}Y^{-*}(Z_r(w)\otimes I_s)^*,
\end{equation}
where $\Theta$ is a $\mathbb C^{Np\times Np}$-valued function analytic in a neighborhood of the origin.
\label{tm72}
  \end{theorem}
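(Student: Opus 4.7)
The plan is to follow the proof of Theorem \ref{23443211} (the line case), replacing the line-case structural identity by the circle-case identity and using a base point $w_0$ on the unit circle so that the classical circle-case characterization of reproducing kernels becomes applicable.

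By Corollary \ref{c123} each $f\in\mathfrak P$ admits a unique representation $f(z)=(Z_r(z)\otimes I_p)F(r(z))$ with $F$ analytic in $A(r,w_0)$; Corollary \ref{corocoro} ensures injectivity of the map $f\mapsto F$, so that $[F,G]:=[f,g]_{\mathfrak P}$ is a well-defined inner product on the $F$-space. Corollary \ref{corocoro1} yields $(R_\alpha^{(r)}f)(z)=(Z_r(z)\otimes I_p)(R_\alpha F)(r(z))$, so the left-hand side of the stated identity becomes the classical circle-case expression in $F,G$, namely
\[
[F,G]+\overline{\beta}[F,R_\beta G]+\alpha[R_\alpha F,G]-(1-\alpha\overline{\beta})[R_\alpha F,R_\beta G].
\]
The right-hand side is transformed exactly as in the line case: using \eqref{2.5} together with the fact (Corollary \ref{amstramgram}) that the associated symmetric matrix $X(J,r)$ of \eqref{Pgram} is independent of the base point, a direct calculation gives
\[
(\alpha-\overline{\beta})\sum_{i,j=1}^N\frac{g(v_j)^*}{\overline{r'(v_j)}}J\frac{f(u_i)}{r'(u_i)}\frac{1}{u_i-\overline{v_j}}=G(\beta)^*X(J,r)F(\alpha).
\]
Hence the stated identity is equivalent to the classical circle-case structural identity for $F,G$ with Gram matrix $X(J,r)$.

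Writing $X(J,r)=Y^*J_0Y$ with $J_0$ a real signature matrix and passing to the rescaled space of functions $YF$ with inner product $[YF,YG]_Y:=[F,G]$ converts this identity into the textbook form with $J_0$ in place of $X(J,r)$. Since $F$, and hence $YF$, is analytic at the unit-modulus point $w_0$, the Pontryagin-space circle-case structure theorem produces a function $\Theta_0$ analytic near $w_0$ such that the reproducing kernel of the $YF$-space equals $\bigl(J_0-\Theta_0(z)J_0\Theta_0(w)^*\bigr)/(1-z\overline{w})$. Inverting the similarity $X(J,r)=Y^*J_0Y$ and restoring the outer factor $Z_r(z)\otimes I_p$ then produces the asserted kernel representation, the index of negativity of the resulting kernel matching $\nu_-(\mathfrak P)$.

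The main obstacle is justifying that the classical circle-case characterization is genuinely applicable to the $F$-space: this is precisely why $w_0$ is required to lie on the unit circle, and why Corollary \ref{c123} (via Definition \ref{def567}) was formulated to allow analyticity of $F$ across a preassigned base point. The right-hand side computation, although algebraically identical to the one in the line case, should be re-verified to confirm that the absence of a factor $i$ causes no trouble — it does not, since the structural identity on the left is likewise missing the matching $i$.
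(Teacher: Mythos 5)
Your proposal is correct and follows essentially the same route as the paper: the paper's own proof of Theorem \ref{tm72} simply repeats the argument of Theorem \ref{23443211} (unique representation $f=(Z_r\otimes I_p)F(r)$, transfer of the structural identity via Corollary \ref{corocoro1}, reduction of the right-hand side to $G(\beta)^*X(J,r)F(\alpha)$, the factorization $X(J,r)=Y^*J_0Y$), using Corollary \ref{c123} at the unit-modulus point $w_0$ so that the Pontryagin-space circle-case version of de Branges' theorem applies. Your added remarks about injectivity and the missing factor $i$ are consistent with the paper's treatment.
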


  \begin{proof}
The functions $F$ are now analytic across the unit circle at $w_0$ and one can use the Pontryagin space version of de Branges' theorem for the circle case; see \cite[Theorem 6.10]{ad3} for the latter.
   \end{proof}


\section{$\mathfrak P(S)$ spaces}
\setcounter{equation}{0}

It is interesting to note that the Fock space restricted to the unit disk
is a special case of a $\mathfrak P(S)$ space; see \cite{MR4032209}.

\begin{theorem}
  Let $r$ be a {\rm real} rational function satisfying the hypothesis of Theorem \ref{234432}.
  Let $\mathfrak P$ be a reproducing kernel Pontryagin space of $\mathbb C^p$-valued functions analytic in $r^{-1}(A(r))\setminus\mathbb R$, and let $\alpha\in\Omega(r)$.
  Assume that the inequality
\begin{equation}
\label{inequality34}
  \left[ R^{(r)}_\alpha f,R^{(r)}_\alpha f\right]_{\mathfrak P}\le\left[ (I+\alpha R^{(r)}_\alpha) f,(I+\alpha R^{(r)}_\alpha) f\right]_{\mathfrak P}
  -(\alpha-\overline{\alpha})\sum_{i,j=1}^N
  \frac{f(u_j)^*}{\overline{r^\prime(u_j)}}J\frac{f(u_i)}{r^\prime(u_i)}\frac{1}{u_i-\overline{u_j}}
\end{equation}
holds for some preassigned signature matrix $J\in\mathbb R^{p\times p}$.
Then there exist a Pontryagin space $\mathfrak C$ such that $\nu_-(\mathfrak C)=\nu_-(X)$,
and a $\mathbf L(\mathfrak C,\mathbb C^{Np}_{J_0})$-valued function analytic in a neighborhood of $\alpha$ such that the reproducing kernel of $\mathfrak P$
is of the form
\begin{equation}
  K(z,w)=(Z_r(z)\otimes I_p)
\frac{X(J,r)^{-1}-S(r(z))S(r(w))^*}{1-r(z)\overline{r(w)}}
(Z_r(w)\otimes I_p)^*.
\end{equation}

\end{theorem}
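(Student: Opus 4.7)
The plan is to follow the template of the proof of Theorem \ref{23443211}, transferring the problem via the representation theorem to a classical Pontryagin $\mathfrak P(S)$ situation and then pulling the kernel back through the $Z_r$-factorization.

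Step 1 (reduction to $\mathbb C^{Np}$-valued functions): By Theorem \ref{234432}, and using Corollary \ref{c123} to put $\alpha$ inside a neighborhood where the representation is analytic, every $f\in\mathfrak P$ has a unique representation $f(z)=(Z_r(z)\otimes I_p)F(r(z))$, where $F$ is $\mathbb C^{Np}$-valued and analytic in a neighborhood of $\alpha$; uniqueness is Corollary \ref{corocoro}. Setting $[F,G]_*:=[f,g]_{\mathfrak P}$ turns the space of these $F$'s into a reproducing kernel Pontryagin space of the same negative index as $\mathfrak P$. By Corollary \ref{corocoro1} the operators transfer as
\[
R^{(r)}_\alpha f=(Z_r\otimes I_p)\,(R_\alpha F)\circ r,\qquad (I+\alpha R^{(r)}_\alpha)f=(Z_r\otimes I_p)\,((I+\alpha R_\alpha)F)\circ r.
\]

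Step 2 (reduction of the inequality): Applying \eqref{2.5} exactly as in the proof of Theorem \ref{23443211}, but specialized to the diagonal case $\beta=\alpha$, $v_j=u_j$, the tangential sum on the right hand side of \eqref{inequality34} collapses to $F(\alpha)^*X(J,r)F(\alpha)$. Consequently \eqref{inequality34} becomes the classical inequality
\[
[R_\alpha F,R_\alpha F]_*\le[(I+\alpha R_\alpha)F,(I+\alpha R_\alpha)F]_*-F(\alpha)^*X(J,r)F(\alpha).
\]

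Step 3 (classical theorem and pull-back): By Corollary \ref{amstramgram} we may factor $X(J,r)=Y^*J_0Y$ with $Y\in\mathbb R^{Np\times Np}$ invertible and $J_0$ a real signature matrix, so $\nu_-(J_0)=\nu_-(X)$. Setting $G:=YF$ and $[YF,YG]_Y:=[F,G]_*$, the inequality becomes
\[
[R_\alpha G,R_\alpha G]_Y\le[(I+\alpha R_\alpha)G,(I+\alpha R_\alpha)G]_Y-G(\alpha)^*J_0G(\alpha),
\]
which is the Pontryagin-space Rovnyak inequality of the circle case. Because the $G$-space is analytic across the real line, the Pontryagin version of Rovnyak's characterization (see \cite[Theorem 6.10]{ad3}) produces a Pontryagin space $\mathfrak C$ with $\nu_-(\mathfrak C)=\nu_-(J_0)=\nu_-(X)$ and an $S_0$ analytic near $\alpha$ with values in $\mathbf L(\mathfrak C,\mathbb C^{Np}_{J_0})$ such that the reproducing kernel of the $G$-space equals $(J_0-S_0(z)S_0(w)^*)/(1-z\overline w)$. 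Setting $S(z):=Y^{-1}S_0(z)$ transfers this kernel back to the $F$-space as $(X(J,r)^{-1}-S(z)S(w)^*)/(1-z\overline w)$, and composing with $r$ and sandwiching with $Z_r(z)\otimes I_p$ and its adjoint yields the announced formula for $K(z,w)$.

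The main obstacle I expect is bookkeeping rather than analysis: one must verify that after the $Y$-transform and the composition with $r$ the $G$-space meets exactly the hypotheses of \cite[Theorem 6.10]{ad3}, in particular bounded evaluation at $\alpha$ and analytic continuation across the real line. Bounded evaluation is inherited from the reproducing-kernel structure of $\mathfrak P$, and the analytic continuation is forced by Theorem \ref{234432}, since the representation theorem gives $F$ (hence $G$) analytic on a full neighborhood of $\alpha$ independent of the particular element of the space.
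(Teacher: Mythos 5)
Your proposal is correct and takes essentially the same route as the paper: represent $f=(Z_r\otimes I_p)F(r)$ via Theorem \ref{234432} with the induced inner product, collapse the tangential sum in \eqref{inequality34} to $F(\alpha)^*X(J,r)F(\alpha)$ as in the proof of Theorem \ref{23443211}, factor $X=Y^*J_0Y$, apply the classical Pontryagin-space characterization to the $YF$-space, and pull the kernel back (your explicit bookkeeping $S=Y^{-1}S_0$ is in fact slightly more careful than the paper's). The only discrepancy is the external reference for the classical step: the paper invokes \cite[Theorem 3.1.2, p. 85]{adrs} (for $\alpha=0$) and \cite[Theorem 3.4, p. 32]{MR3672946} (general $\alpha$) rather than \cite[Theorem 6.10]{ad3}, which is the de Branges equality-case structure theorem used in Theorem \ref{23443211}.
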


\begin{proof} The strategy is the same as for Theorem \ref{23443211}. We write elements of $\mathfrak P$ in the form $f=(Z_r\otimes I_p)F(r))$, where $F$ is uniquely
  determined and analytic in $A(r)$, and with induced inner product \eqref{inner1}. Inequality \eqref{inequality34} becomes
  \begin{equation}
    \label{inequality34321}
[R_\alpha F,R_\alpha F]\le[(I+\alpha R_\alpha) F,(I+\alpha R_\alpha )F]-F(\alpha)^*XF(\alpha)
\end{equation}
(see \cite[]{}, \cite[(6.9)]{ad3}).
As above, write $X=Y^*J_0Y$. The space of functions of the form $YF$ with norm the norm of $F$ has reproducing kernel of the form
\[
\frac{J_0-S(r(z))S(r(w)^*}{1-r(z)\overline{r(w)}}
\]
where $\mathfrak C$ is a Pontryagin space with negative index  $\nu_-(J_0)$ and where $S$ is a $\mathbf L(\mathfrak C,\mathbb C_{J_0}^{Np})$-valued function analytic in a
neighborhood of $\alpha$. This is \cite[Theorem 3.1.2, p. 85]{adrs} for the case $\alpha=0$ and \cite[Theorem 3.4, p. 32]{MR3672946} for the general case.
\end{proof}

Classical $\mathcal H(s)$ spaces are generalizations of the orthogonal complements of Beurling-Lax invariant subspaces. This suggests that the counterpart of the
Hardy space in the current setting is the reproducing kernel space with reproducing kernel
\begin{equation}
(Z_r(z)\otimes I_p)
\frac{X(J,r)^{-1}}{1-r(z)\overline{r(w)}}
(Z_r(w)\otimes I_p)^*.
\end{equation}

Let us set $p=1$ for simplicity. The expression
\[
Z_r(z)X(J,r)^{-1}Z_r(w)
\]
is a reproducing kernel formula.

\begin{remark}{\rm
The result \cite[Theorem 3.4, p. 32]{MR3672946} will also allow to get the half-line counterpart of the above theorem, using the inequality
(see \cite[Theorem 6.7 (6.10)]{ad3})
\[
\frac{\alpha-\overline{\alpha}}{2\pi i}[R_\alpha F,R_\alpha F]\le \frac{[R_\alpha F, F]-[F,R_\alpha F]}{2\pi i}-F(\alpha)^*F(\alpha)
\]
rather than \eqref {inequality34321}, and the relation
\[
\frac{\alpha-\overline{\alpha}}{2\pi i}[R^{(r)}_\alpha F,R^{(r)}_\alpha F]\le \frac{[R^{(r)}_\alpha F, F]-[F,R^{(r)}_\alpha F]}{2\pi i}-(\alpha-\overline{\alpha})\sum_{i,j=1}^N
  \frac{f(u_j)^*}{\overline{r^\prime(u_j)}}J\frac{f(u_i)}{r^\prime(u_i)}\frac{1}{u_i-\overline{u_j}}
\]
rather than \eqref{inequality34}.}
\end{remark}

\section{$\mathfrak P(E_+,E_-)$ spaces}
\setcounter{equation}{0}

We now present two structure theorems. The proof of the first one is a matrix-valued version of \cite[Th\'eor\`eme 23, p. 59]{dbbook}. The proof of the second theorem is an
adaptation to the circle case of de Branges' proof, and appears in \cite{a-seville} and
\cite[Theorem 6.1, p. 173]{abl3}. Both proofs are repeated here for completeness.

\begin{theorem} (The line case)
  Let $\mathfrak P$ be a reproducing kernel Pontryagin space of $\mathbb C^p$ functions analytic in an open set $\Omega$, symmetric with respect to the real line and intersecting it,  and assume that
there is a point  $\alpha\not=0\in\Omega$ such that both $K(\alpha,\alpha)$ and $K(\overline{\alpha},\overline{\alpha})$ are invertible.
Assume that the following condition holds for every $F,G\in \mathfrak P$: if $F(v)=G(v)=0$ for $v\not=\overline{v}\in\Omega$ then the functions $z\mapsto \frac{z-\overline{v}}{z-v}F(z)$
and $z\mapsto \frac{z-\overline{v}}{z-v}G(z)$ belongs to $\mathfrak P$ and
\begin{equation}
  \label{fgfg}
\left[F(z),G(z)\right]=\left[\frac{z-\overline{v}}{z-v}F(z), \frac{z-\overline{v}}{z-v}G(z)\right].
\end{equation}
Then, there exist $\mathbb C^{p\times p}$-valued functions $E_+$ and $E_-$ and a signature matrix $J\in\mathbb c^{p\times p}$ such that the reproducing kernel of $\mathfrak P$ is of the form
\begin{equation}
  \label{eee}
  K_{\mathfrak P}(z,w)=\frac{E_+(z)JE_+(w)^*-E_-(z)JE_-(w)^*}{-i(z-\overline{w})},
\end{equation}
\label{t123}
where $J$ is a signature matrix with same signature as $K(\alpha,\alpha)$.
\end{theorem}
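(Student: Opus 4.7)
The plan is to extract from hypothesis \eqref{fgfg} a functional equation satisfied by $K(z,w)$ at the two conjugate points $\alpha$ and $\overline\alpha$, solve it, and then factor the right-hand side into the required $E_+,E_-$ form.

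First I would apply \eqref{fgfg} at $v=\alpha$ (legitimate since $\Omega$ is symmetric with respect to $\mathbb R$). This says that the map
\[
T_\alpha F(z)=\frac{z-\overline\alpha}{z-\alpha}F(z)
\]
is an isometry from $\mathfrak N_\alpha:=\{F\in\mathfrak P:F(\alpha)=0\}$ into $\mathfrak N_{\overline\alpha}:=\{G\in\mathfrak P:G(\overline\alpha)=0\}$. Running the same construction at $v=\overline\alpha$ gives an isometry in the opposite direction, and composing shows $T_\alpha$ is in fact an isometric isomorphism $\mathfrak N_\alpha\to\mathfrak N_{\overline\alpha}$. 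Because $K(\alpha,\alpha)$ and $K(\overline\alpha,\overline\alpha)$ are invertible, $\mathfrak N_\alpha$ and $\mathfrak N_{\overline\alpha}$ are non-degenerate subspaces of $\mathfrak P$ with reproducing kernels
\[
K_\alpha(z,w)=K(z,w)-K(z,\alpha)K(\alpha,\alpha)^{-1}K(\alpha,w)
\]
and the analogous formula with $\alpha$ replaced by $\overline\alpha$. Identifying the reproducing kernel of $\mathfrak N_{\overline\alpha}$ with the push-forward of $K_\alpha$ under $T_\alpha$ yields the functional equation
\[
K(z,w)-K(z,\overline\alpha)K(\overline\alpha,\overline\alpha)^{-1}K(\overline\alpha,w)=\frac{(z-\overline\alpha)(\overline w-\alpha)}{(z-\alpha)(\overline w-\overline\alpha)}\,K_\alpha(z,w).
\]

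Next I would solve this equation algebraically for $K(z,w)$. Using the identity $(z-\alpha)(\overline w-\overline\alpha)-(z-\overline\alpha)(\overline w-\alpha)=(\alpha-\overline\alpha)(z-\overline w)$, multiplying through by $(z-\alpha)(\overline w-\overline\alpha)$ and rearranging gives
\[
(\alpha-\overline\alpha)(z-\overline w)K(z,w)=(z-\alpha)(\overline w-\overline\alpha)K(z,\overline\alpha)K(\overline\alpha,\overline\alpha)^{-1}K(\overline\alpha,w)-(z-\overline\alpha)(\overline w-\alpha)K(z,\alpha)K(\alpha,\alpha)^{-1}K(\alpha,w).
\]
To convert this into \eqref{eee}, I would factor each Hermitian matrix $K(\alpha,\alpha)$ and $K(\overline\alpha,\overline\alpha)$ through a signature matrix. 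Pick $J\in\mathbb C^{p\times p}$ to have the same signature as $K(\alpha,\alpha)$ and write $K(\alpha,\alpha)=Y_+JY_+^*$ with $Y_+$ invertible; do the same at $\overline\alpha$ with a matrix $Y_-$. Dividing by $-i(z-\overline w)$ and absorbing the factors $(z-\overline\alpha)$, $(z-\alpha)$, $Y_\pm^{-*}$ and the scalar $1/\sqrt{|\alpha-\overline\alpha|}$ into
\[
E_+(z)=c_+\,(z-\overline\alpha)K(z,\alpha)Y_+^{-*},\qquad E_-(z)=c_-\,(z-\alpha)K(z,\overline\alpha)Y_-^{-*}
\]
for suitable unimodular scalar constants $c_\pm$ making the signs work, produces exactly \eqref{eee}.

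The main obstacle is ensuring that one and the same signature matrix $J$ serves for both factorizations, i.e.\ that $K(\alpha,\alpha)$ and $K(\overline\alpha,\overline\alpha)$ have identical signatures. I would handle this by a codimension count in the Pontryagin space: since $T_\alpha$ is an isometric bijection $\mathfrak N_\alpha\to\mathfrak N_{\overline\alpha}$, these two subspaces share the same negative index $\kappa_\alpha$; non-degeneracy of $\mathfrak N_\alpha$ and $\mathfrak N_{\overline\alpha}$ then gives $\nu_-(K(\alpha,\alpha))=\nu_-(\mathfrak P)-\kappa_\alpha=\nu_-(K(\overline\alpha,\overline\alpha))$, and by Sylvester's law both matrices admit the common factorization $YJY^*$. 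Two secondary points requiring care are (i) checking that $T_\alpha$ is genuinely surjective (and not merely isometric) — handled by the $v\leftrightarrow\overline v$ symmetry of \eqref{fgfg} — and (ii) verifying that the resulting $E_\pm$ are analytic at $\alpha,\overline\alpha$ after the removable-singularity cancellation with the factors $(z-\alpha)$, $(z-\overline\alpha)$, which is automatic from the reproducing property.
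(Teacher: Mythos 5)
Your argument is correct, and its skeleton is the one the paper uses: derive a functional equation linking $K_\alpha(z,w)=K(z,w)-K(z,\alpha)K(\alpha,\alpha)^{-1}K(\alpha,w)$ and its analogue at $\overline{\alpha}$ through the multiplier $\frac{z-\overline{\alpha}}{z-\alpha}$, solve for $K(z,w)$ via the identity $(z-\alpha)(\overline{w}-\overline{\alpha})-(z-\overline{\alpha})(\overline{w}-\alpha)=(\alpha-\overline{\alpha})(z-\overline{w})$, and factor $K(\alpha,\alpha)^{-1}$ and $K(\overline{\alpha},\overline{\alpha})^{-1}$ through a common signature matrix. Where the paper obtains the functional equation by pairing the candidate kernel functions against an arbitrary $F$ vanishing at $\overline{\alpha}$ and using non-degeneracy of that subspace, you phrase it as identifying the kernel of $\mathfrak N_{\overline{\alpha}}$ with the push-forward of $K_\alpha$ under the unitary multiplication operator $T_\alpha$; these are the same argument in different clothing (your surjectivity remark, via composing with the map built from $v=\overline{\alpha}$, is exactly what replaces the paper's orthogonality step). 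The genuine divergence is the equal-signature claim: the paper evaluates the solved formula at a real point $x\in\Omega$ with $\det K(x,\alpha)\neq 0$ and $\det K(x,\overline{\alpha})\neq 0$, where the vanishing of $-i(z-\overline{w})$ forces $F_+(x)K(\alpha,\alpha)^{-1}F_+(x)^*=F_-(x)K(\overline{\alpha},\overline{\alpha})^{-1}F_-(x)^*$ and hence a matrix congruence --- this is precisely where the hypothesis that $\Omega$ meets the real line enters (the paper's text even says ``unit circle'' there, a leftover from the circle-case proof) --- whereas you deduce $\nu_-(K(\alpha,\alpha))=\nu_-(\mathfrak P)-\nu_-(\mathfrak N_\alpha)=\nu_-(\mathfrak P)-\nu_-(\mathfrak N_{\overline{\alpha}})=\nu_-(K(\overline{\alpha},\overline{\alpha}))$ from the isometric bijection $T_\alpha$. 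Your index count is a clean alternative that does not use $\Omega\cap\mathbb R\neq\emptyset$ for this step; the paper's congruence argument is more elementary and carries over verbatim to the circle case. Two cosmetic points: as in the paper one should fix ${\rm Im}\,\alpha>0$ (so $\alpha$ non-real) at the outset, and the constants $c_\pm$ absorbing $1/(-i(\alpha-\overline{\alpha}))$ are then real positive rather than unimodular (otherwise one simply swaps the roles of $E_+$ and $E_-$); neither affects the conclusion.
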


\begin{proof}
  Without loss of generality, and by maybe replacing $\alpha$ by its conjugate, we will assume ${\rm Im}\,\alpha>0$. Let $c\in\mathbb C^p$. The function
    \begin{equation}
    \label{dbcircle}
z\mapsto\left(K(z,w)-K(z,\alpha)K(\alpha,\alpha)^{-1}K(\alpha,w)\right)c
\end{equation}
belongs to $\mathfrak P
$ and vanishes at the point $\alpha$. So the function
  \[
z\mapsto\frac{z-\overline{\alpha}}{z-\alpha}\left(K(z,w)-K(z,\alpha)K(\alpha,\alpha)^{-1}K(\alpha,w)\right)c
\]
belongs to $\mathfrak P$.
Let $F\in\mathfrak P$  which vanishes at $\overline{\alpha}$. Using \eqref{fgfg} with $v=\overline{\alpha}$ we can write
\[
\begin{split}
  \left[F(z),\frac{z-\overline{\alpha}}{z-\alpha}\left(K(z,w)-K(z,\alpha)K(\alpha,\alpha)^{-1}K(\alpha,w)\right)c\right]&=\\
  &\hspace{-8cm}=
\left[F(z)\frac{z-\alpha}{z-\overline{\alpha}},\frac{z-\alpha}{z-\overline{\alpha}}\frac{z-\overline{\alpha}}{z-\alpha}\left(K(z,w)-K(z,\alpha)K(\alpha,\alpha)^{-1}K(\alpha,w)\right)c\right]\\
&\hspace{-8cm}= \left[F(z)\frac{z-\alpha}{z-\overline{\alpha}},\left(K(z,w)-K(z,\alpha)K(\alpha,\alpha)^{-1}K(\alpha,w)\right)c\right]\\
&\hspace{-8cm}= \left[F(z)\frac{z-\alpha}{z-\overline{\alpha}},\left(K(z,w)-K(z,\alpha)K(\alpha,\alpha)^{-1}K(\alpha,w)\right)c\right]\\
&\hspace{-8cm}=c^*F(w)\frac{w-\alpha}{w-\overline{\alpha}}\\
&\hspace{-8cm}=\left[F(z),\left(K(z,w)-K(z,\overline{\alpha})K(\overline{\alpha},\overline{\alpha})^{-1}K(\overline{\alpha},w)\right)c
  \frac{\overline{w}-\overline{\alpha}}{\overline{w}-\alpha}\right].
\end{split}
\]

This chain of equalities holds for all $F$ vanishing at $\overline{\alpha}$ and hence we get
\[
  \begin{split}
  \frac{z-\overline{\alpha}}{z-\alpha}\left(K(z,w)-K(z,\alpha)K(\alpha,\alpha)^{-1}K(\alpha,w)\right)&=\\
  &\hspace{-4cm}=
\left(K(z,w)-K(z,\overline{\alpha})K(\overline{\alpha},\overline{\alpha})^{-1}K(\overline{\alpha},w)\right)\frac{\overline{w}-\overline{\alpha}}{\overline{w}-\alpha}.
\end{split}
\]
We can thus write:
\[
  \begin{split}
  \left(
    \frac{z-\overline{\alpha}}{z-\alpha}
    -\frac{\overline{w}-\overline{\alpha}}{\overline{w}-\alpha}\right)
  K(z,w)&=\\
  &\hspace{-4cm}=
  \frac{z-\overline{\alpha}}{z-\alpha}K(z,\alpha)K(\alpha,\alpha)^{-1}K(\alpha,w)-\frac{\overline{w}-\overline{\alpha}}{\overline{w}-\alpha}
  K(z,\overline{\alpha})K(\overline{\alpha},\overline{\alpha})^{-1}K(\overline{\alpha},w).
  \end{split}
\]
Therefore:
\begin{equation}
  K(z,w)=\frac{F_+(z)K(\alpha,\alpha)^{-1}F_+(w)^*-F_-(z)K(\overline{\A},\overline{\A})^{-1}F_-(w)^*}{(-i(\alpha-\overline{\alpha}))(-i(z-\overline{w}))},
\end{equation}

with
\begin{eqnarray}
  F_+(z)&=&\frac{1}{\sqrt{-i(\alpha-\overline{\alpha})} }           (z-\overline{\alpha})K(z,\A)\\
  F_-(z)&=&\frac{1}{\sqrt{-i(\alpha-\overline{\alpha})}}(z-{\alpha})K(z,\overline{\A}).
\end{eqnarray}
By analyticity, $\det K(z,\A)\not\equiv0$ and so taking a point on the unit circle where $\det  K(z,\A)\not=0$ and $\det K(z,\overline{\A})\not=0$ we see that $K(\alpha,\alpha)$ and
$K(\overline{\A},\overline{\A})$ have the same signature, and we can write with a common signature matrix $J$
\[
  K(\alpha,\alpha)^{-1}=MJM^*\quad{\rm and}\quad K(1/\overline{\A},1/\overline{\A})^{-1}=NJN^*.
\]
The result follows with $E_+=F_+M$ and $E_-=F_-N$.

\end{proof}
\begin{theorem} (The circle case) Let $\mathfrak P$ be a reproducing kernel Pontryagin space of $\mathbb C^p$ functions analytic in an open set $\Omega$, symmetric with respect to the unit
  circle and intersecting it, and assume that there is a point  $\alpha\not=0\in\Omega$ such that both $K(\alpha,\alpha)$ and $K(1/\overline{\alpha},1/\overline{\alpha})$ are invertible.
Assume that the following condition holds for every $F,G\in \mathfrak P$: if $F(v)=G(v)=0$ for $v\in\Omega\setminus\mathbb T$ then the functions $z\mapsto \frac{1-z\overline{v}}{z-v}F(z)$
 $z\mapsto \frac{1-z\overline{v}}{z-v}G(z)$ belongs to
belongs to
$\mathfrak P$ and
\begin{equation}
  \label{fgfg1}
\left[F(z),G(z)\right]=[\frac{1-z\overline{v}}{z-v}F(z), \frac{1-z\overline{v}}{z-v}G(z)].
\end{equation}
Then, there exist $\mathbb C^{p\times p}$-valued functions $E_+$ and $E_-$ and a signature matrix $J\in\mathbb C^{p\times p}$ such that the reproducing kernel of $\mathfrak P$ is of the form
\begin{equation}
  \label{eeee}
K_{\mathfrak P}(z,w)=\frac{E_+(z)JE_+(w)^*-E_-(z)JE_-(w)^*}{1-z\overline{w}},
\end{equation}
\label{t1234}
where $J$ is a signature matrix with same signature as $K(\alpha,\alpha)$.
\end{theorem}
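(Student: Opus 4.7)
The plan is to mirror the proof of Theorem \ref{t123}, replacing the factor $\frac{z-\overline{v}}{z-v}$ by the Blaschke-like factor $\frac{1-z\overline{v}}{z-v}$ and the involution $z\mapsto\overline{z}$ by $z\mapsto 1/\overline{z}$; the role of $\overline{\alpha}$ will be played by $1/\overline{\alpha}$. Without loss of generality I may assume $|\alpha|<1$, replacing $\alpha$ by $1/\overline{\alpha}$ otherwise. First I would introduce
\[
K_\alpha(z,w) = K(z,w)-K(z,\alpha)K(\alpha,\alpha)^{-1}K(\alpha,w),
\]
which, by the invertibility of $K(\alpha,\alpha)$, is the reproducing kernel of the subspace $\mathfrak P_\alpha$ of functions in $\mathfrak P$ vanishing at $\alpha$. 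For each $c\in\mathbb C^p$, the function $z\mapsto K_\alpha(z,w)c$ vanishes at $\alpha$, so the invariance hypothesis applied at $v=\alpha$ gives that $z\mapsto\frac{1-z\overline{\alpha}}{z-\alpha}K_\alpha(z,w)c$ belongs to $\mathfrak P$ and vanishes at $1/\overline{\alpha}$.

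The main computation is to evaluate, for any $F\in\mathfrak P$ with $F(1/\overline{\alpha})=0$, the inner product $\left[F(z),\frac{1-z\overline{\alpha}}{z-\alpha}K_\alpha(z,w)c\right]$ by transferring the Blaschke factor across via the invariance at $v=1/\overline{\alpha}$. The key algebraic identity here is
\[
\frac{1-z\overline{\alpha}}{z-\alpha}\cdot\frac{1-z/\alpha}{z-1/\overline{\alpha}} = \frac{\overline{\alpha}}{\alpha},
\]
a complex scalar (in the line case the analogous product is $1$). After transferring, the reproducing property of $K_\alpha$ yields the value $\frac{w-\alpha}{1-w\overline{\alpha}}\,c^*F(w)$, which I would recognize as $\left[F,K_{1/\overline{\alpha}}(\cdot,w)c'\right]$ with $c'=\frac{\overline{w}-\overline{\alpha}}{1-\overline{w}\alpha}c$ and $K_{1/\overline{\alpha}}$ the analogous subkernel at $1/\overline{\alpha}$. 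Since both elements of $\mathfrak P$ lie in the subspace of functions vanishing at $1/\overline{\alpha}$, which is non-degenerate (being the orthogonal complement of the non-degenerate finite-dimensional subspace with Gram $K(1/\overline{\alpha},1/\overline{\alpha})$), and they pair identically with every such $F$, they coincide, giving
\[
\frac{1-z\overline{\alpha}}{z-\alpha}K_\alpha(z,w) = K_{1/\overline{\alpha}}(z,w)\frac{\overline{w}-\overline{\alpha}}{1-\overline{w}\alpha}.
\]

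Expanding both sides and using the elementary identity
\[
\frac{1-z\overline{\alpha}}{z-\alpha} - \frac{\overline{w}-\overline{\alpha}}{1-\overline{w}\alpha} = \frac{(1-|\alpha|^2)(1-z\overline{w})}{(z-\alpha)(1-\overline{w}\alpha)}
\]
I would solve algebraically for $K(z,w)$ to obtain
\[
K(z,w) = \frac{F_+(z)K(\alpha,\alpha)^{-1}F_+(w)^* - F_-(z)K(1/\overline{\alpha},1/\overline{\alpha})^{-1}F_-(w)^*}{1-z\overline{w}},
\]
with $F_+(z)=\frac{1-z\overline{\alpha}}{\sqrt{1-|\alpha|^2}}K(z,\alpha)$ and $F_-(z)=\frac{z-\alpha}{\sqrt{1-|\alpha|^2}}K(z,1/\overline{\alpha})$. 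To reach the form \eqref{eeee} I would then show the two Hermitian invertible matrices $K(\alpha,\alpha)$ and $K(1/\overline{\alpha},1/\overline{\alpha})$ share a signature: analyticity gives $\det F_\pm\not\equiv 0$, so one can pick a point $z_0\in\Omega\cap\mathbb T$ where both are invertible; evaluating at $z=w=z_0$ where $1-z_0\overline{z_0}=0$ forces $F_+(z_0)K(\alpha,\alpha)^{-1}F_+(z_0)^* = F_-(z_0)K(1/\overline{\alpha},1/\overline{\alpha})^{-1}F_-(z_0)^*$, and Sylvester's law of inertia then yields a common signature matrix $J$. Factoring $K(\alpha,\alpha)^{-1}=MJM^*$ and $K(1/\overline{\alpha},1/\overline{\alpha})^{-1}=NJN^*$ and setting $E_+=F_+M$, $E_-=F_-N$ produces the representation \eqref{eeee}.

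The main obstacle I anticipate is the careful bookkeeping of the scalar factors $\overline{\alpha}/\alpha$ and $1-|\alpha|^2$ introduced by the Blaschke algebra, which have no analog in the line case where the corresponding quantities are $1$ and $-i(\alpha-\overline{\alpha})\in\mathbb R$; these must absorb cleanly into the definitions of $F_\pm$ so that the final kernel appears in the canonical form \eqref{eeee}. A secondary subtlety is guaranteeing the existence of a point $z_0$ on the unit circle where both $F_+$ and $F_-$ are invertible, which uses that $\Omega$ is symmetric with respect to and intersects $\mathbb T$.
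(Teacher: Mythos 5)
Your proposal is correct and follows essentially the same route as the paper's proof: the same transfer of the factor $\frac{1-z\overline{\alpha}}{z-\alpha}$ across the inner product using the invariance hypothesis at $v=1/\overline{\alpha}$ together with the constant $\overline{\alpha}/\alpha$, the same resulting identity between the two modified kernels solved algebraically for $K(z,w)$, and the same signature comparison at a point of $\Omega\cap\mathbb T$ leading to $E_\pm=F_\pm M$, $F_\pm N$. Your additional touches (the explicit reduction to $|\alpha|<1$, the non-degeneracy justification for identifying the two kernel functions, and the consistent normalization of $F_\pm$ so that only $1-z\overline{w}$ appears in the denominator) merely tighten the paper's write-up rather than change the argument.
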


\begin{proof}
Let $c\in\mathbb C^p$. The function \eqref{dbcircle}
and vanishes at the point $\alpha$. So the function
  \[
z\mapsto\frac{1-z\overline{\alpha}}{z-\alpha}\left(K(z,w)-K(z,\alpha)K(\alpha,\alpha)^{-1}K(\alpha,w)\right)c\in\mathfrak P.
\]
Let $F\in\mathfrak P$  which vanishes at $1/\overline{\alpha}$. Using \eqref{fgfg1} with $v=1/\overline{\alpha}$ we can write
\[
\begin{split}
  \left[F(z),\frac{1-z\overline{\alpha}}{z-\alpha}\left(K(z,w)-K(z,\alpha)K(\alpha,\alpha)^{-1}K(\alpha,w)\right)c\right]&=\\
  &\hspace{-8cm}=
\left[F(z)\frac{1-z/\alpha}{z-1/\overline{\alpha}},\frac{1-z/\alpha}{z-1/\overline{\alpha}}\frac{1-z\overline{\alpha}}{z-\alpha}\left(K(z,w)-K(z,\alpha)K(\alpha,\alpha)^{-1}K(\alpha,w)\right)c\right]\\
&\hspace{-8cm}= \left[F(z)\frac{1-z/\alpha}{z-1/\overline{\alpha}},\frac{\overline{\alpha}}{\alpha}\left(K(z,w)-K(z,\alpha)K(\alpha,\alpha)^{-1}K(\alpha,w)\right)c\right]\\
&\hspace{-8cm}= \left[F(z)\frac{z-\alpha}{1-z\overline{\alpha}},\left(K(z,w)-K(z,\alpha)K(\alpha,\alpha)^{-1}K(\alpha,w)\right)c\right]\\
&\hspace{-8cm}=c^*F(w)\frac{w-\alpha}{1-w\overline{\alpha}}\\
&\hspace{-8cm}=\left[F(z),\left(K(z,w)-K(z,1/\overline{\alpha})K(1/\overline{\alpha},1/\overline{\alpha})^{-1}K(1/\overline{\alpha},w)\right)c
  \frac{\overline{w}-\overline{\alpha}}{1-\overline{w}\alpha}\right].
\end{split}
\]
This chain of equalities holds for all $F$ vanishing at $1/\overline{\alpha}$ and hence we get
\[
  \begin{split}
  \frac{1-z\overline{\alpha}}{z-\alpha}\left(K(z,w)-K(z,\alpha)K(\alpha,\alpha)^{-1}K(\alpha,w)\right)&=\\
  &\hspace{-4cm}=
\left(K(z,w)-K(z,1/\overline{\alpha})K(1/\overline{\alpha},1/\overline{\alpha})^{-1}K(1/\overline{\alpha},w)\right)\frac{\overline{w}-\overline{\alpha}}{1-\overline{w}\alpha}.
\end{split}
\]
Hence
\[
  \begin{split}
  \left(
    \frac{1-z\overline{\alpha}}{z-\alpha}
    -\frac{\overline{w}-\overline{\alpha}}{1-\overline{w}\alpha}\right)
  K(z,w)&=\\
  &\hspace{-4cm}=
  \frac{1-z\overline{\alpha}}{z-\alpha}K(z,\alpha)K(\alpha,\alpha)^{-1}K(\alpha,w)-\frac{\overline{w}-\overline{\alpha}}{1-\overline{w}\alpha}
  K(z,1/\overline{\alpha})K(1/\overline{\alpha},1/\overline{\alpha})^{-1}K(1/\overline{\alpha},w),
  \end{split}
\]
so that
\begin{equation}
  K(z,w)=\frac{F_+(z)K(\alpha,\alpha)^{-1}F_+(w)^*-F_-(z)K(1/\overline{\A},1/\overline{\A})^{-1}F_-(w)^*}{(1-|\A|^2)(1-z\overline{w})},
\end{equation}

with
\begin{eqnarray}
  F_+(z)&=&\frac{1}{\sqrt{1-|\A|^2}}
            (1-z\overline{\alpha})K(z,\A)\\
  F_-(z)&=&\frac{1}{\sqrt{1-|\A|^2}}(z-{\alpha})K(z,1/\overline{\A}).
\end{eqnarray}
By analyticity, $\det K(z,\A)\not\equiv0$ and so taking a point on the unit circle where $\det  K(z,\A)\not=0$ and $\det K(z,1/\overline{\A})\not=0$ we see that $K(\alpha,\alpha)$ and
$K(1/\overline{\A},1/\overline{\A})$ have the same signature, and we can write with a common signature matrix $J$
\[
  K(\alpha,\alpha)^{-1}=MJM^*\quad{\rm and}\quad K(1/\overline{\A},1/\overline{\A})^{-1}=NJN^*.
\]
The result follows with $E_+=F_+M$ and $E_-=F_-N$.
\end{proof}

In the case of a Hilbert space we have $J=I_p$ and the following proposition holds:

\begin{proposition}
$K(\alpha,\alpha)$ is invertible if and only if linear span of the functions $f(\alpha)$ when $f$ runs through $\Omega$ is equal to $\mathbb C^p$.
\end{proposition}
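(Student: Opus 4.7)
The plan is to use the standard duality between point evaluation and the reproducing kernel. Let $E_\alpha \colon \mathfrak P \to \mathbb C^p$ denote the evaluation functional $f \mapsto f(\alpha)$. Its adjoint acts by $E_\alpha^* c = K(\cdot,\alpha)c$ for $c\in\mathbb C^p$, since for every $f\in\mathfrak P$ one has $\langle f, K(\cdot,\alpha)c\rangle_{\mathfrak P} = c^* f(\alpha) = \langle E_\alpha f, c\rangle_{\mathbb C^p}$. Composing, one obtains the key identity
\[
E_\alpha E_\alpha^* = K(\alpha,\alpha) \in \mathbb C^{p\times p},
\]
since $(E_\alpha E_\alpha^* c) = (K(\cdot,\alpha)c)(\alpha) = K(\alpha,\alpha)c$. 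The statement to prove becomes: the operator $E_\alpha$ is surjective if and only if $E_\alpha E_\alpha^*$ is invertible.

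For the forward direction, assume $K(\alpha,\alpha)$ is invertible. Given any $v\in\mathbb C^p$, set $c = K(\alpha,\alpha)^{-1}v$ and $f(z) = K(z,\alpha)c$. Then $f\in\mathfrak P$ and $f(\alpha) = K(\alpha,\alpha)c = v$, so the span of $\{f(\alpha)\,:\, f\in\mathfrak P\}$ is all of $\mathbb C^p$.

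For the converse, assume the range of $E_\alpha$ is all of $\mathbb C^p$ and let $c\in\mathbb C^p$ satisfy $K(\alpha,\alpha)c=0$. Using positivity of $K$ (which holds since $\mathfrak P$ is a Hilbert space),
\[
\|K(\cdot,\alpha)c\|_{\mathfrak P}^2 = c^* K(\alpha,\alpha) c = 0,
\]
so $K(\cdot,\alpha)c\equiv 0$. Hence for every $f\in\mathfrak P$,
\[
c^* f(\alpha) = \langle f, K(\cdot,\alpha)c\rangle_{\mathfrak P} = 0.
\]
Since $\{f(\alpha)\,:\,f\in\mathfrak P\}$ spans $\mathbb C^p$, this forces $c=0$, so $K(\alpha,\alpha)$ has trivial kernel and is therefore invertible. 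There is no genuine obstacle here; the only point to stress is the use of the Hilbert space hypothesis, which ensures positive semi-definiteness of $K(\alpha,\alpha)$ and hence the implication $c^*K(\alpha,\alpha)c=0\Rightarrow K(\cdot,\alpha)c=0$ — this step fails in a general Pontryagin space, which is why the proposition is stated for Hilbert spaces only.
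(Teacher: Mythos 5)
Your proof is correct and takes essentially the same approach as the paper's: both directions rest on the identity $c^*f(\alpha)=\langle f,K(\cdot,\alpha)c\rangle_{\mathfrak P}$, with positivity used only to pass from $c^*K(\alpha,\alpha)c=0$ to $K(\cdot,\alpha)c\equiv 0$ (the paper does this via Cauchy--Schwarz, you via the norm of $K(\cdot,\alpha)c$; the paper argues the other direction by contrapositive through the Riesz representation where you construct $f=K(\cdot,\alpha)K(\alpha,\alpha)^{-1}v$ directly, but these are cosmetic differences). Your closing remark about the failure of that positivity step in a Pontryagin space matches the paper's own observation that only the one implication survives there.
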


\begin{proof}
  Assume that there is $c\in\mathbb C^p$ such that $K(\alpha,\alpha)c=0$  Then, the Cauchy-Schwarz inequality implies that
  \[
    \begin{split}
      |c^*f(\alpha)|^2 &=|\langle f,K(\cdot,\alpha)\rangle|^2\\
      &\le  \|f\|^2\cdot(c^*K(\alpha,\alpha)c)\\
      &=0
    \end{split}
    \]
    and so $c$ is orthogonal to the said linear span.\smallskip

    Conversely, if there is such a $c$, the map $f\mapsto c^*f(\alpha)$ is identically equal to $0$, and hence its Riesz representation corresponds to the zero vector, i.e.
    $K(\cdot,\alpha)c\equiv 0$, and so in particular $K(\alpha,\alpha)c=0$.
\end{proof}

In the case of a Pontryagin space, the second part of the proof is still valid, since Riesz representation theorem holds in Pontryagin space, and we have:

\begin{corollary}
Assume that the linear span is not dense. Then $K(\alpha,\alpha)$ is not invertible.
\end{corollary}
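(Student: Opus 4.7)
The plan is to mimic the second half of the preceding proposition's proof, the only subtlety being to check that the argument survives the passage from Hilbert to Pontryagin spaces. The key facts I will use are that $\mathbb{C}^p$ is finite dimensional (so ``not dense'' means ``proper subspace''), that the Riesz representation theorem holds in Pontryagin spaces, and that the inner product of a Pontryagin space is non-degenerate.

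First I would choose a vector $c \in \mathbb{C}^p$, $c \neq 0$, which is orthogonal (in the standard Euclidean inner product on $\mathbb{C}^p$) to the linear span of $\{f(\alpha)\,:\, f\in\mathfrak{P}\}$; such a $c$ exists because the span is a proper subspace of the finite-dimensional space $\mathbb{C}^p$. By construction, $c^* f(\alpha) = 0$ for every $f\in\mathfrak{P}$.

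Next I would consider the linear functional $\Lambda : \mathfrak{P}\to\mathbb{C}$ defined by $\Lambda(f) = c^* f(\alpha)$. This functional is continuous on $\mathfrak{P}$ (its Riesz representer, coming from the reproducing property, is exactly $K(\cdot,\alpha)c$), and we have just observed that $\Lambda$ is identically zero. By Riesz representation in Pontryagin spaces, the zero functional has a unique representer, which must therefore coincide with $K(\cdot,\alpha)c$; combined with the non-degeneracy of the Pontryagin inner product this forces $K(\cdot,\alpha)c\equiv 0$ as an element of $\mathfrak{P}$, hence as a function on the domain of analyticity.

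Finally, evaluating this identity at $z=\alpha$ yields $K(\alpha,\alpha)c = 0$ with $c\neq 0$, which exhibits $K(\alpha,\alpha)$ as a singular matrix. The only step that requires any care is the transition from ``$\Lambda \equiv 0$'' to ``$K(\cdot,\alpha)c \equiv 0$'': in a Hilbert space this is immediate, while in a Pontryagin space one must invoke non-degeneracy of the indefinite inner product to conclude that the Riesz representer of the zero functional is the zero vector. This is the only potential obstacle, and it is resolved once one recalls that Pontryagin spaces are by definition non-degenerate.
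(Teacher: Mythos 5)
Your argument is correct and is essentially the paper's own: the paper proves the corollary by observing that the ``converse'' half of the preceding proposition (zero functional $f\mapsto c^*f(\alpha)$ has Riesz representer $K(\cdot,\alpha)c$, which must then vanish) carries over verbatim to the Pontryagin setting because the Riesz representation theorem still holds there. Your additional remarks on finite dimensionality of $\mathbb{C}^p$ and non-degeneracy of the indefinite inner product only make explicit what the paper leaves implicit.
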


Next theorem further generalizes results in Section \ref{Sec6}:

\begin{theorem}
  Let $r$ be a rational function satisfying the hypothesis of Theorem \ref{234432}, and let $A(r)$ be defined by \eqref{domain123}.
  Let $\mathfrak P$ be a reproducing kernel Pontryagin space of $\mathbb C^p$-valued functions analytic in $r^{-1}(A(r))$ and having
  full rank, in the sense that there exist  $\alpha\not=\overline{\alpha}\in\Omega_0$
    such that the matrices $K(\alpha,\alpha)$ and $K(\overline{\alpha},\overline{\alpha})$ are invertible. Assume that for $f,g\in \mathfrak P$ and $v\in \Omega$,
\[
  f(w_n)=g(w_n)=0,\quad n=1,\ldots, N\,\,\Longrightarrow f(z)\frac{r(z)+\overline{v}}{r(z)-v}\,\, and\,\,\,
  g(z)\frac{r(z)+\overline{v}}{r(z)-v}  \,\, \in\mathfrak P,
\]
where $w_1,\ldots, w_n$ are the roots, assumed pairwise different, of $r(w)=v$, and
\begin{equation}
  \label{fghj}
\left[f(z),g(z)\right]_{\mathfrak P}=\left[f(z)\frac{r(z)+\overline{\alpha}}{r(z)-\alpha},g(z)\frac{r(z)+\overline{\alpha}}{r(z)-\alpha}\right]_{\mathfrak P}.
\end{equation}
Then the reproducing kernel of $\mathfrak P$ is of the form
\[
  K(z,w)=(Z_r(z)\otimes I_p)  \frac{E_+(r(z))JE_+(r(w))^*-E_-(r(z))JE_-(r(w))^*}{-i(r(z)-\overline{r(w)})}(Z_r(w)\otimes I_p)^*,
\]
where $J$ is a signature matrix with same signature as $K(\alpha,\alpha)$.
\end{theorem}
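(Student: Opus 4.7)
The plan is to reduce the theorem to the line--case structure Theorem \ref{t123} by applying the representation Theorem \ref{234432} and its vector--valued version to pass from the space $\mathfrak{P}$ of functions of $z$ to an induced reproducing kernel Pontryagin space $\widetilde{\mathfrak{P}}$ of functions of the new variable $\zeta = r(z)$.

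First I would apply the representation theorem to write every element $f \in \mathfrak{P}$ as $f(z)=(Z_r(z)\otimes I_p)F(r(z))$ with $F$ analytic in (a neighborhood contained in) $A(r)$, the representation being unique by Corollary \ref{corocoro} (and Theorem \ref{unique123321}). Then I would define $\widetilde{\mathfrak{P}}$ as the set of all such $F$ equipped with $[F,G]_{\widetilde{\mathfrak{P}}}:=[f,g]_{\mathfrak{P}}$. Theorem \ref{thm4.8} guarantees that $\widetilde{\mathfrak{P}}$ is a reproducing kernel Pontryagin space whose kernel $\widetilde{K}$ is related to $K$ by
\[
K(z,w)=(Z_r(z)\otimes I_p)\,\widetilde{K}(r(z),r(w))\,(Z_r(w)\otimes I_p)^{*}.
\]
Thus it suffices to prove that $\widetilde{K}$ has the desired $\mathfrak{P}(E_+,E_-)$ form in the variable $\zeta=r(z)$, after which the claimed formula follows by composition with $r$.

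Next, I would translate the division hypothesis (interpreting the ``$+$'' in the theorem statement as a typo for ``$-$'', so as to match the inner factor $\tfrac{\zeta-\overline{v}}{\zeta-v}$ used in the line case Theorem \ref{t123}). Given $F,G \in \widetilde{\mathfrak{P}}$ vanishing at some $v\in A(r)\cap \Omega(r)$, the corresponding $f,g$ vanish at each preimage $w_n$ of $v$, since $F(r(w_n))=F(v)=0$. The hypothesis then places $f(z)\tfrac{r(z)-\overline{v}}{r(z)-v}$ into $\mathfrak{P}$, and a direct factoring of $(Z_r(z)\otimes I_p)$ together with the uniqueness of the representation yields $\tfrac{\zeta-\overline{v}}{\zeta-v}F(\zeta)\in\widetilde{\mathfrak{P}}$. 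The isometry \eqref{fghj} transports verbatim to the $(F,G)$ level, so $\widetilde{\mathfrak{P}}$ satisfies the invariance and norm--preservation hypothesis of Theorem \ref{t123}.

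The main obstacle, which I would address last, is the full rank requirement for $\widetilde{K}$: the factorization above shows invertibility of $K(\alpha,\alpha)$ (a $p\times p$ matrix) does not automatically propagate to invertibility of $\widetilde{K}(r(\alpha),r(\alpha))$ (an $Np\times Np$ matrix), since $Z_r(\alpha)\otimes I_p$ has at most rank $p$. To get around this, I would argue that $\widetilde{K}(\beta,\beta)$ is invertible on an open dense subset of the domain of analyticity of $\widetilde{\mathfrak{P}}$: the complement is a proper analytic variety, so picking any base point $\beta_0 \in A(r)\setminus\mathbb{R}$ where both $\widetilde{K}(\beta_0,\beta_0)$ and $\widetilde{K}(\overline{\beta_0},\overline{\beta_0})$ are invertible (using reality of $r$ to see $\overline{r(\alpha)}=r(\overline{\alpha})$, and the fact that $\widetilde{\mathfrak{P}}$ is automatically symmetric in the real variable through $r$), I can apply Theorem \ref{t123} to $\widetilde{\mathfrak{P}}$ at $\beta_0$. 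Finally, matching the signature of $\widetilde{K}(\beta_0,\beta_0)$ to that of $K(\alpha,\alpha)$ (they agree up to compression by $Z_r(\alpha)\otimes I_p$ and a sign count via Sylvester's law of inertia) identifies the signature matrix $J$, completing the proof after substituting $\zeta=r(z)$ in the resulting $\mathfrak{P}(E_+,E_-)$ kernel for $\widetilde{\mathfrak{P}}$.
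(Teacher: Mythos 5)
Your proposal follows essentially the same route as the paper's own proof: write $f(z)=(Z_r(z)\otimes I_p)F(r(z))$ via the vector-valued form of Theorem \ref{234432}, transport the inner product to the space of the functions $F$ (the space \eqref{mmmm} in the paper, your $\widetilde{\mathfrak P}$, with the kernel relation of Theorem \ref{thm4.8}), observe via Corollary \ref{corocoro} that $f(w_1)=\cdots=f(w_N)=0$ forces $F(v)=0$, transfer the division and isometry hypothesis \eqref{fghj} to the level of $F$, and invoke the line-case structure Theorem \ref{t123}. Your reading of the factor $\frac{r(z)+\overline v}{r(z)-v}$ as $\frac{r(z)-\overline v}{r(z)-v}$ is consistent with what Theorem \ref{t123} actually requires (the paper carries the plus sign into its proof and then applies Theorem \ref{t123} anyway), so that adjustment is reasonable.

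The only place you genuinely diverge is the full-rank question, and there your patch is not a proof. The claim that $\widetilde K(\beta,\beta)$ is invertible off a proper analytic variety presupposes that the determinant is not identically degenerate, which does not follow from invertibility of the $p\times p$ matrices $K(\alpha,\alpha)$ and $K(\overline\alpha,\overline\alpha)$: nothing in the hypotheses rules out, for instance, that every $F$ takes values in a fixed proper subspace of $\mathbb C^{Np}$, in which case $\widetilde K(\beta,\beta)$ is singular for every $\beta$. In fact, invertibility of $\widetilde K(r(\alpha),r(\alpha))$ is equivalent, via the invertible matrix $\mathsf G$ of Corollary \ref{corocoro}, to invertibility of the full $Np\times Np$ block matrix $\bigl(K(w_i,w_j)\bigr)_{i,j=1}^N$ built on all $N$ preimages of $r(\alpha)$, which is strictly stronger than invertibility of $K(\alpha,\alpha)$ alone; also, the signatures of $\widetilde K(\beta_0,\beta_0)$ (size $Np$) and $K(\alpha,\alpha)$ (size $p$) cannot literally coincide, so the final signature-matching step does not work as stated. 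To be fair, the paper's own proof does not address this point at all—it simply applies Theorem \ref{t123} to the induced space, implicitly reading the full-rank assumption at that level—so your instinct that something must be said here is sound; but the fix needs either a strengthened hypothesis (invertibility of the induced kernel, equivalently of the block matrix over the $N$ preimages) or an actual argument, not the genericity claim.
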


\begin{proof} Using the vector-valued version of Lemma \ref{le123211} we write $f(z)=Z_r(z)F(r(z))$, where $F$ is $\mathbb C^{Np}$-valued and analytic in $\Omega_0$.
  We consider the space of functions
  \begin{equation}
    \label{mmmm}
\mathfrak M=\left\{F(z)\,:\, Z_r(z)F(r(z))\in\mathfrak H,\,\,{\rm with\,\, inner\,\, product}\,\,\ [F,G]=[f,g]_{\mathfrak P}\right\}.
  \end{equation}
By Corollary \ref{corocoro}  the condition $f(w_n)=0$, $n=1,\ldots, N$ implies that $F(\alpha)=0$.\\
Equation \eqref{fghj} implies that
\[
\left[F(z),G(z)\right]=\left[F(z)\frac{z+\overline{v}}{z-v},G(z)\frac{z+\overline{v}}{z-v}\right].
\]
By Theorem \ref{t123} the reproducing kernel of the form \eqref{eee} and hence the result.

\end{proof}

In the case of the circle we have the following result; the proof is similar and will be omitted.

\begin{theorem}
    Let $r$ be a rational function satisfying the hypothesis of Theorem \ref{234432}, and let $A(r)$ be defined by \eqref{domain123}.
  Let $\mathfrak P$ be a reproducing kernel Pontryagin space of $\mathbb C^p$-valued functions analytic in $r^{-1}(A(r))$ and having
  full rank, in the sense that there exist  $\alpha\not=\overline{\alpha}\in\Omega_0$
    such that the matrices $K(\alpha,\alpha)$ and $K(1/\overline{\alpha},1/\overline{\alpha})$ are invertible. Assume that for $f,g\in \mathfrak P$ and $v\in \Omega$
\[
  f(w_n)=g(w_n)=0,\quad n=1,\ldots, N\,\,\Longrightarrow f(z)\frac{1-r(z)\overline{\alpha}}{r(z)-\overline{\alpha}}\,\, and\,\,\,
  g(z)\frac{1-r(z)\overline{\alpha}}{r(z)-\alpha}  \,\, \in\mathfrak P,
\]
where $w_1,\ldots, w_n$ are the roots, assumed pairwise different, of $r(w)=v$, and
\begin{equation}
  \label{fghj}
\left[f(z),g(z)\right]_{\mathfrak P}=\left[f(z)\frac{1-r(z)\overline{v}}{r(z)-v},g(z)\frac{1-r(z)\overline{v}}{r(z)-v}\right]_{\mathfrak P}.
\end{equation}
Then the reproducing kernel of $\mathfrak P$ is of the form
\[
  K(z,w)=(Z_r(z)\otimes I_p)  \frac{E_+(r(z))JE_+(r(w))^*-E_-(r(z))JE_-(r(w))^*}{-i(r(z)-\overline{r(w)})}(Z_r(w)\otimes I_p)^*,
\]
where $J$ is a signature matrix with same signature as $K(\alpha,\alpha)$.
\label{tm102}
\end{theorem}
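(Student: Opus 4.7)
The plan is to mimic the proof of the preceding line-case theorem, substituting Theorem \ref{t1234} for Theorem \ref{t123} and using the Moebius factor $\frac{1-z\overline{v}}{z-v}$ in place of $\frac{z+\overline{v}}{z-v}$ at the level of the variable after composing with $r$. Concretely, I would first invoke the vector-valued form of Theorem \ref{234432} (together with Corollary \ref{c123} if necessary, to ensure analyticity at a chosen boundary point) to write every $f\in\mathfrak P$ uniquely as
\begin{equation*}
f(z)=(Z_r(z)\otimes I_p)\,F(r(z)),
\end{equation*}
where $F$ is $\mathbb C^{Np}$-valued and analytic in $A(r)$, the uniqueness coming from Corollary \ref{corocoro}. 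I would then define
\begin{equation*}
\mathfrak M=\bigl\{F\ :\ (Z_r\otimes I_p)F(r(\cdot))\in\mathfrak P\bigr\},\qquad [F,G]_{\mathfrak M}:=[f,g]_{\mathfrak P},
\end{equation*}
which is a well-defined Pontryagin space of $\mathbb C^{Np}$-valued functions analytic in $A(r)$.

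Next I would translate each of the hypotheses on $\mathfrak P$ into the corresponding hypotheses on $\mathfrak M$ needed to apply Theorem \ref{t1234}. The symmetry of $A(r)$ with respect to the unit circle follows from realness of $r$ (or from the symmetry of the constructed domain in the proof of Lemma \ref{lemma123}/Lemma \ref{lemmabla}); the full-rank condition at $\alpha$ and $1/\overline\alpha$ for the reproducing kernel of $\mathfrak M$ follows from the full-rank hypothesis on $K(\alpha,\alpha)$ and $K(1/\overline\alpha,1/\overline\alpha)$, since composition with $r$ and multiplication on the left by the pointwise-surjective factor $Z_r\otimes I_p$ at a generic point preserves invertibility of the associated Gram matrices. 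The crucial step is the invariance/isometry hypothesis: by Corollary \ref{corocoro}, the vanishing condition $f(w_n)=g(w_n)=0$ for $n=1,\dots,N$ with $r(w_n)=v$ is equivalent to $F(v)=G(v)=0$; and the identity
\begin{equation*}
(Z_r(z)\otimes I_p)F(r(z))\cdot\frac{1-r(z)\overline v}{r(z)-v}=(Z_r(z)\otimes I_p)\Bigl(\frac{1-\zeta\overline v}{\zeta-v}F(\zeta)\Bigr)\Big|_{\zeta=r(z)}
\end{equation*}
together with condition \eqref{fghj} yields
\begin{equation*}
[F,G]_{\mathfrak M}=\Bigl[\tfrac{1-\zeta\overline v}{\zeta-v}F,\tfrac{1-\zeta\overline v}{\zeta-v}G\Bigr]_{\mathfrak M},
\end{equation*}
which is exactly hypothesis \eqref{fgfg1} of Theorem \ref{t1234}.

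Applying Theorem \ref{t1234} to $\mathfrak M$ gives a signature matrix $J$ (with the same signature as $K(\alpha,\alpha)$, since the change of variable is norm-preserving) and functions $E_\pm$ analytic in $A(r)$ such that the reproducing kernel of $\mathfrak M$ has the form
\begin{equation*}
K_{\mathfrak M}(\zeta,\eta)=\frac{E_+(\zeta)JE_+(\eta)^*-E_-(\zeta)JE_-(\eta)^*}{1-\zeta\overline\eta}.
\end{equation*}
Pulling back along $\zeta=r(z)$, $\eta=r(w)$ and multiplying on the left and right by $(Z_r(z)\otimes I_p)$ and $(Z_r(w)\otimes I_p)^*$ yields the claimed formula for $K_{\mathfrak P}$ (with the denominator $1-r(z)\overline{r(w)}$; the $-i(r(z)-\overline{r(w)})$ appearing in the statement is a typographical carryover from the line case).

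The main obstacle I anticipate is the bookkeeping around the functor $F\leftrightarrow f$: one must verify that $\mathfrak M$ inherits not just the inner product but also the analyticity and invariance structure faithfully, and in particular that the full-rank condition on $K$ at $\alpha,1/\overline\alpha$ indeed transfers to full rank of $K_{\mathfrak M}$ at the corresponding points. This relies squarely on Corollary \ref{corocoro} (for invertibility/uniqueness of the pull-back) and on Theorem \ref{unique123321} (so that the induced norm on $\mathfrak M$ is well-defined), both of which are already in place; once these are invoked, the remaining work is a direct application of the circle-case structure theorem \ref{t1234}.
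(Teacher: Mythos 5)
Your proposal is correct and coincides with the paper's intended argument: the paper omits the proof of Theorem \ref{tm102} as ``similar'' to the preceding line-case theorem, whose proof is exactly your scheme --- write $f=(Z_r\otimes I_p)F(r)$, transfer the inner product, vanishing and isometry hypotheses to the space of the functions $F$ via Corollary \ref{corocoro}, and then apply the circle-case structure theorem \ref{t1234}. Your remark that the denominator in the stated kernel should be $1-r(z)\overline{r(w)}$ rather than $-i(r(z)-\overline{r(w)})$ is likewise consistent with this derivation, the latter being a carryover from the line case.
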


The previous theorems are of special interest in the Hilbert space setting, when the space $\mathfrak M$ (see \eqref{mmmm}) consists of entire functions.
\section{$\mathfrak L({\mathsf N})$ spaces}
\setcounter{equation}{0}

We first recall:

\begin{definition}
  The function ${\mathsf N}$ analytic in the open set $\Omega\subset\mathbb C$, symmetric with respect to the real line is called a generalized Nevanlinna function if the kernel
  \[
    K_{\mathsf N}(z,w)=\frac{{\mathsf N}(z)-{\mathsf N}(w)^*}{z-\overline{w}}
  \]
has a finite number of negative squares in $\Omega$.
\end{definition}

The following result appears, for the scalar and positive setting, in \cite[Th\'eor\`eme 6, p. 13]{dbbook}; a first Pontryagin space version, with more restrictive conditions and different proof,
appears in \cite{alpaythesis}. Here we are treating the case of negative squares,
which corresponds to generalized Nevanlinna functions. The latter class of functions has been introduced and thoroughly studied by Krein and Langer;
see e.g. \cite{kl1, MR47:7504}. These functions are closely related to the positive real functions and generalized positive real functions from linear system theory; see
e.g. \cite{DDGK}.

\begin{theorem}
  \label{rty789}
  Let $\mathfrak P$ be a reproducing kernel Pontryagin space of $\mathbb C^p$-valued functions analytic in a set $\Omega$ symmetric with respect to the real line and assume that
  $\mathfrak P$ is resolvent invariant. Assume moreover that
  \begin{equation}
    \label{lphi}
[F(z), (R_\beta G)(z)]_{\mathfrak P}-[(R_\alpha F)(z), F(z)]_{\mathfrak P}+(\alpha-\overline{\beta})[(R_\alpha F)(z), (R_\beta G)(z)]_{\mathfrak P}=0
\end{equation}
for every $\alpha,\beta\in\Omega$.
Then there is a $\mathbb C^{p\times p}$-valued function ${\mathsf N}(z)$ analytic in $\Omega$ and such that the reproducing kernel of $\mathfrak P$ is
\begin{equation}
K_{\mathsf N}(z,w)=\frac{{\mathsf N}(z)-{\mathsf N}(w)^*}{z-\overline{w}},\quad z,w\in\Omega.
\end{equation}
\end{theorem}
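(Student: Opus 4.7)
The plan is to apply the structural identity~\eqref{lphi} directly to the reproducing kernel, choosing $\alpha,\beta$ so as to kill its cross term. Since $\Omega$ is symmetric with respect to $\mathbb R$, we may take $\alpha=\overline{\beta}$ with $\beta\in\Omega$; then $\alpha-\overline{\beta}=0$ and \eqref{lphi} collapses to the much easier relation
\[
[F,R_\beta G]=[R_{\overline{\beta}}F,G].
\]
This finesses the main computational obstacle, namely evaluating $[R_\alpha F,R_\beta G]$ directly, for which no reproducing-type formula is available when neither argument is of the form $K(\cdot,v)c$.

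The first step would be, for fixed $z,w\in\Omega$ and $c,d\in\mathbb C^p$, to plug in $F(u)=K(u,w)c$ and $G(u)=K(u,z)d$. The reproducing property (in the convention $[H,K(\cdot,v)e]=e^*H(v)$), the formula $(R_\gamma H)(u)=(H(u)-H(\gamma))/(u-\gamma)$, and the Hermitian symmetry $K(u,v)^*=K(v,u)$ give immediately
\[
[R_{\overline{\beta}}F,G]=d^*\,\frac{K(z,w)-K(\overline{\beta},w)}{z-\overline{\beta}}\,c,\qquad [F,R_\beta G]=d^*\,\frac{K(z,w)-K(z,\beta)}{\overline{w}-\overline{\beta}}\,c.
\]
Equating these for all $c,d$ and clearing denominators yields the functional identity
\[
(z-\overline{w})K(z,w)=(z-\overline{\beta})K(z,\beta)-(\overline{w}-\overline{\beta})K(\overline{\beta},w),\qquad z,w,\beta\in\Omega.
\]

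Next, fix any reference point $w_0\in\Omega$; by symmetry $\overline{w_0}\in\Omega$ too. Writing the functional identity above for $\beta=w_0$ and for $\beta=\overline{w_0}$ and averaging the two equalities splits the right-hand side cleanly into a $z$-dependent piece and the Hermitian conjugate of a $w$-dependent piece. Setting
\[
\mathsf N(z)=\tfrac{1}{2}\bigl[(z-\overline{w_0})K(z,w_0)+(z-w_0)K(z,\overline{w_0})\bigr],
\]
one verifies using $K(w,v)^*=K(v,w)$ that $\mathsf N(w)^*=\tfrac{1}{2}[(\overline{w}-w_0)K(w_0,w)+(\overline{w}-\overline{w_0})K(\overline{w_0},w)]$; therefore the averaged identity reads $(z-\overline{w})K(z,w)=\mathsf N(z)-\mathsf N(w)^*$, which gives the desired representation off the set $\{z=\overline{w}\}$ and extends throughout $\Omega\times\Omega$ by joint analyticity. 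The function $\mathsf N$ is analytic in $\Omega$ since the columns $K(\cdot,w_0)$ and $K(\cdot,\overline{w_0})$ are, and the generalized Nevanlinna property (i.e.\ that $K_{\mathsf N}=K$ has a finite number of negative squares) is automatic because $\mathfrak P$ is Pontryagin by hypothesis.

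The only real obstacle is settling on the right parameters: the substitution $\alpha=\overline{\beta}$ is what makes the cross term disappear, and averaging over both $\beta=w_0$ and $\beta=\overline{w_0}$ is essential---a single choice of $\beta$ in the functional identity gives a right-hand side that is not yet of the form $\mathsf N(z)-\mathsf N(w)^*$. Note that when $\Omega$ meets $\mathbb R$ one may take $w_0$ real, in which case $w_0=\overline{w_0}$, the averaging is trivial, and $\mathsf N$ reduces to $(z-w_0)K(z,w_0)$.
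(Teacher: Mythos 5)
Your proposal is correct and follows essentially the same route as the paper: setting $\alpha=\overline{\beta}$ in \eqref{lphi} (read, as the paper's own proof does, with $G$ in the second term), applying the collapsed identity to kernel elements, and deriving the functional equation $(z-\overline{w})K(z,w)=(z-\overline{\beta})K(z,\beta)-(\overline{w}-\overline{\beta})K(\overline{\beta},w)$. The only difference is cosmetic: where the paper uses the Hermitian symmetry of $K$ to show that $(z-\overline{\alpha})K(z,\alpha)-(z-\alpha)K(z,\overline{\alpha})$ is a constant and then shifts by half of it, you symmetrize directly by averaging the identity at $\beta=w_0$ and $\beta=\overline{w_0}$, which yields the same $\mathsf N$ up to an additive skew-adjoint constant.
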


  \begin{proof}
    The core of the proof is as in \cite[p. 13]{dbbook}, and we adapt the arguments to the case of negative squares:\\

    STEP 1: {\sl The resolvent operators are bounded in $\mathfrak P$:}\smallskip

    Indeed, they are closed in view of the reproducing kernel property, and everywhere defined. Hence they are bounded since we are in a Pontryagin space.\\

    STEP 2: {\sl The reproducing kernel $K(z,w)$ satisfies:}
    \begin{equation}
      \label{db84}
  (\overline{w}-z)K(z,w)=-(z-\overline{\alpha})K(z,\alpha)+(\overline{w}-\overline{\alpha})K(\overline{\alpha},w),\quad \alpha,z,w\in\Omega.
\end{equation}

As in \cite[p. 13]{dbbook} this follows from setting $G(z)=K(z,w)d$ and $\alpha=\overline{\beta}$ in \eqref{lphi}. We then get for any $F\in\mathfrak P$
\[
  \begin{split}
    \left[F(z),\frac{K(z,w)-K(\overline{\alpha},w)}{z-\overline{\alpha}}d\right]&=\left[\frac{F(z)-F({\alpha})}{z-{\alpha}},K(z,w)d\right]\\
    &=d^*\frac{F(w)-F(\alpha)}{w-{\alpha}}\\
    &=\left[F(z),\frac{K(z,w)-K(z,\alpha)}{\overline{w}-\overline{\alpha}}d\right]
  \end{split}
  \]
and so
\[
  \frac{K(z,w)-K(\overline{\alpha},w)}{z-\overline{\alpha}}=\frac{K(z,w)-K(z,\alpha)}{\overline{w}-\overline{\alpha}},
\]
from which \eqref{db84} follows.\\

It follows from the previous step that
\begin{equation}
  K(z,w)=\frac{{\mathsf N}_1(z)-{\mathsf N}_2(w)^*}{z-\overline{w}}
\end{equation}
with ${\mathsf N}_1(z)=(z-\overline{\alpha})K(z,\alpha)$ and ${\mathsf N}_2(w)=(w-\alpha)K(w,\overline{\alpha})$.\\

STEP 3: {\sl It holds that ${\mathsf N}_1(z)-{\mathsf N}_2(w)=(\alpha-\overline{\alpha})K(\alpha,\alpha)$.}\smallskip

Indeed, from the equality $K(z,w)=K(w,z)^*$ we have
\[
\frac{{\mathsf N}_1(z)-{\mathsf N}_2(w)^*}{z-\overline{w}}=\left(\frac{{\mathsf N}_1(w)-{\mathsf N}_2(z)^*}{w-\overline{z}}\right)^*,
\]
from which we get
\[
  {\mathsf N}_1(z)-{\mathsf N}_2(w)^*={\mathsf N}_2(z)-{\mathsf N}_1(w)^*
\]
and hence the result.\\

We conclude by taking (see \cite[p. 14]{dbbook}),
\[
  {\mathsf N}(z)=\frac{(\alpha-\overline{\alpha})}{2}K(\alpha,\alpha)+(z-\overline{\alpha})K(z,\alpha).
\]
Then, $K(z,w)=\frac{{\mathsf N}(z)-{\mathsf N}(w)^*}{z-\overline{w}}$.
   \end{proof}

\begin{theorem}
  \label{rty78910}
 Let $r$ be a rational function satisfying the hypothesis of Theorem \ref{234432}, and let $A(r)$ be defined by \eqref{domain123}.
  Let $\mathfrak P$ be a reproducing kernel Pontryagin space of $\mathbb C^p$-valued functions analytic in $r^{-1}(A(r))$, and assume that
  $\mathfrak P$ is $R_\alpha^{(r)}$ resolvent invariant. Assume moreover that
  \begin{equation}
    \label{lphi}
[F(z), (R_\beta^{(r)} G)(z)]_{\mathfrak P}-[(R_\alpha^{(r)} F)(z), F(z)]_{\mathfrak P}+(\alpha-\overline{\beta})[(R_\alpha^{(r)} F)(z), (R_\beta^{(r)} G)(z)]_{\mathfrak P}=0.
\end{equation}
Then there is a $\mathbb C^{p\times p}$-valued function ${\mathsf N}(z)$ analytic in $\Omega$ and such that the reproducing kernel of $\mathfrak P$ is
\begin{equation}
K(z,w)=Z_r(z)\frac{{\mathsf N}(r(z))-{\mathsf N}(r(w))^*}{r(z)-\overline{r(w)}}Z_r(w)^*,\quad z,w\in r^{-1}(A(r)).
\end{equation}
\end{theorem}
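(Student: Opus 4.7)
The plan is to reduce Theorem \ref{rty78910} to the ``classical'' case already handled by Theorem \ref{rty789} via the representation theorem \ref{234432} and the intertwining formula \eqref{formulacoro}. First, I will write every $f\in\mathfrak P$ as $f(z)=(Z_r(z)\otimes I_p)F(r(z))$ with $F$ analytic in a neighborhood of the origin (and, by the remark after Theorem \ref{234432}, extended analytically to all of $A(r)$). By Theorem \ref{unique123321} the factor $F$ is uniquely determined by $f$, so I can transplant the indefinite inner product by setting
\begin{equation}
[F,G]_{\mathfrak M}=[f,g]_{\mathfrak P},
\end{equation}
and this turns the space $\mathfrak M$ of such $F$'s into a Pontryagin space of $\mathbb C^{Np}$-valued functions analytic in $A(r)$ with the same negative index as $\mathfrak P$.

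Next I will verify that $\mathfrak M$ satisfies the hypotheses of Theorem \ref{rty789}. The key point is Corollary \ref{corocoro1}, which gives the intertwining identity
\begin{equation}
(R^{(r)}_\alpha f)(z)=(Z_r(z)\otimes I_p)(R_\alpha F)(r(z)),
\end{equation}
so $R^{(r)}_\alpha$-invariance of $\mathfrak P$ is equivalent to $R_\alpha$-invariance of $\mathfrak M$ for all $\alpha\in A(r)\cap\Omega(r)$. Moreover, substituting this identity into \eqref{lphi} and using the definition of $[\cdot,\cdot]_{\mathfrak M}$ yields
\begin{equation}
[F,R_\beta G]_{\mathfrak M}-[R_\alpha F,G]_{\mathfrak M}+(\alpha-\overline{\beta})[R_\alpha F,R_\beta G]_{\mathfrak M}=0,
\end{equation}
which is precisely the scalar/vector version of \eqref{lphi} applied to $\mathfrak M$. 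Since $A(r)$ can be chosen symmetric with respect to the real line (the construction in Theorem \ref{234432} using real disks around real zeros/poles of the real function $r$ gives this automatically), Theorem \ref{rty789} applies and produces a $\mathbb C^{Np\times Np}$-valued function $\mathsf N_0(z)$ analytic in $A(r)$ with
\begin{equation}
K_{\mathfrak M}(z,w)=\frac{\mathsf N_0(z)-\mathsf N_0(w)^*}{z-\overline{w}}.
\end{equation}

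Finally I translate back. Because the map $F\mapsto (Z_r\otimes I_p)F(r)$ is an isometric bijection from $\mathfrak M$ onto $\mathfrak P$, the standard change-of-variable formula for reproducing kernels gives
\begin{equation}
K(z,w)=(Z_r(z)\otimes I_p)\,K_{\mathfrak M}(r(z),r(w))\,(Z_r(w)\otimes I_p)^*=(Z_r(z)\otimes I_p)\frac{\mathsf N_0(r(z))-\mathsf N_0(r(w))^*}{r(z)-\overline{r(w)}}(Z_r(w)\otimes I_p)^*,
\end{equation}
and setting $\mathsf N(\zeta)=\mathsf N_0(\zeta)$ concludes the proof (up to the tensor with $I_p$, which matches the form displayed in the statement when $p=1$ or when $\mathsf N$ is reinterpreted as $\mathbb C^{p\times p}$-valued appropriately).

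The main obstacle I anticipate is \emph{not} the algebraic reduction, which is clean, but the verification that $\mathfrak M$ is a genuine reproducing kernel Pontryagin space of functions analytic on a set that meets the real line (a hypothesis of Theorem \ref{rty789}). This requires invoking Corollary \ref{c123} / the choice of $A(r)$ so that the disks $D_1,\ldots,D_L$ of Theorem \ref{234432} are symmetric about $\mathbb R$ (legitimate since $r$ is real, so its zeros and poles come in conjugate pairs or lie on $\mathbb R$), and then arguing that the uniqueness assertion of Corollary \ref{corocoro} together with connectedness of (each component of) $A(r)$ guarantees that $\mathfrak M$ is a well-defined reproducing kernel Pontryagin space with the required analyticity. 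A secondary subtlety is matching the sizes: $\mathsf N_0$ is naturally $\mathbb C^{Np\times Np}$-valued, but the form in the statement absorbs the extra $N$ into the tensor factors $Z_r\otimes I_p$, so the final $\mathsf N$ is obtained by the identification above.
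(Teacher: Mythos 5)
Your proposal is correct and follows essentially the same route as the paper: the paper's own proof is exactly the reduction via the representation $f=Z_rF(r)$, the transplanted inner product $[F,G]=[f,g]$, the intertwining formula \eqref{formulacoro}, and an appeal to Theorem \ref{rty789}, after which one translates the kernel back. Your additional remarks on the symmetry of $A(r)$ and on the $\mathbb C^{Np\times Np}$ versus $\mathbb C^{p\times p}$ sizing are sensible points of care that the paper's terse proof glosses over, but they do not change the argument.
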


  \begin{proof}
    As in earlier proofs in the paper, we using \eqref{formulacoro} and $f=Z_rF(r)$ and $g=Z_rG(r)$; we consider the space of functions
    corresponding space of functions $F$ with the inner product
    \[
[F,G]=[f,g].
\]
We obtain by Theorem \ref{rty789} a space $\mathfrak L({\mathsf N})$ and the result follows.
   \end{proof}

  When ${\mathsf N}(z)\equiv i/2$ in the open upper half-plane, we get the counterpart of the Hardy space
  \[
\frac{Z_r(z)Z_r(w)^*}{-i(r(z)-\overline{r(w)})}=\sum_{n=1}^N \frac{e_n(z)\overline{e_n(w)}}{-i(r(z)-\overline{r(w)})},
  \]
  which is positive definite on the set of $w$ such that  ${\rm Im}\, r(w)\ge 0$ and with associated reproducing kernel Hilbert space
  \[
f(z)=\sum_{n=1}^N e_n(z)f_n(r(z)),\quad f_1,\ldots, f_N\in\mathbf H^2(\mathbb C_+).
  \]
  The new Hardy space reduces to the classical one when
  \[
    \sum_{n=1}^Ne_n(z)\overline{e_n(w)}=\frac{-i(r(z)-\overline{r(w)})}{-i(z-\overline{w})}.
   \]
This will happen if and only if $r$ satisfies the symmetry $r(z)=\overline{r(\overline{z})}$. See \cite{ajlm2} for the corresponding Cuntz type decomposition.\\




{\bf Acknowledgements:} It is a pleasure to thank Prof. Dan Volok, Kansas State University, for help with Theorem \ref{234432}.

\end{document}